\newtheorem{theorem}{Theorem}[section]
\newtheorem{definition}{Definition}[section]
\newtheorem{lemma}{Lemma}[section]
\newtheorem{remark}{Remark}[section]
\def\epi{\mathop{\rm epi\,}}
\newcounter{mycount}
\let\orgdescriptionlabel\descriptionlabel
\renewcommand*{\descriptionlabel}[1]{
 \let\orglabel\label
 \let\label\@gobble
 \phantomsection
 \edef\@currentlabel{#1}
 \let\label\orglabel
 \orgdescriptionlabel{#1}
}
\def\th@plain{
 \thm@notefont{}
 \itshape
}
\def\th@definition{
 \thm@notefont{}
 \normalfont
}
\g@addto@macro\th@definition{\thm@headpunct{}}
\g@addto@macro\th@plain{\thm@headpunct{}}
\definecolor{myblue}{rgb}{.8, .8, 1}
\crefname{equation}{}{}
\crefname{chapter}{Chapter}{Chapters}
\crefname{item}{item}{items}
\crefname{figure}{Figure}{Figures}
\crefname{theorem}{Theorem}{Theorems}
\crefname{lemma}{Lemma}{Lemmas}
\crefname{proposition}{Proposition}{Propositions}
\crefname{corollary}{Corollary}{Corollarys}
\crefname{definition}{Definition}{Definitions}
\crefname{fact}{Fact}{Facts}
\crefname{example}{Example}{Examples}
\crefname{algorithm}{Algorithm}{Algorithms}
\crefname{remark}{Remark}{Remarks}
\crefname{note}{Note}{Notes}
\crefname{notation}{Notation}{Notations}
\crefname{case}{Case}{Cases}
\crefname{exercise}{Exercise}{Exercises}
\crefname{question}{Question}{Questions}
\crefname{claim}{Claim}{Claims}
\crefname{enumi}{}{}
\numberwithin{equation}{section}
\spnewtheorem*{Proof}{Proof.}{\bf}{\rm}
\def\bd{\mathop{\rm bd\,}}
\begin{document}

\title{Metric Subregularity  of Multifunctions and Applications to Characterizations of Asplund Spaces\thanks{Research  of the first author was supported 
by Science and Technology Project of Hebei Education Department (No. ZD2022037) and the Natural Science Foundation of Hebei Province (A2022201002).}}

\titlerunning{Metric Subegularity  of Multifunctions and Applications to Characterizations of Asplund Spaces}

\author{Zhou Wei  \and Michel Th\'era \and Jen-Chih Yao}

\institute{Zhou Wei\at Hebei Key Laboratory of Machine Learning and Computational Intelligence \& College of Mathematics and Information Science, Hebei University, Baoding, 071002, China\\ \email{weizhou@hbu.edu.cn}\\
Michel Th\'era \at XLIM UMR-CNRS 7252, Universit\'e de Limoges, Limoges, France\\\email{michel.thera@unilim.fr}\\
Jen-Chih Yao \at Research Center for Interneural Computing, China Medical University Hospital,
China Medical University, Taichung, Taiwan.\\ \email{yaojc@mail.cmu.edu.tw}
}

\date{Received: date / Accepted: date}

\maketitle

\begin{abstract}

In this paper, we investigate metric subregularity of {multifunctions} between Asplund spaces.  
Using Mordukhovich normal cones and coderivatives, we introduce the limiting Basic Constraint Qualification $\textbf{(BCQ)}$ associated with a given {multifunction}.  
This $\textbf{BCQ}$ provides necessary dual conditions for the metric subregularity of  {multifunctions} in the Asplund  {space} setting.  

Furthermore, we establish characterizations of Asplund spaces in terms of the  {limiting} $\textbf{BCQ}$ condition implied by metric subregularity.  
By employing Fr\'echet normal cones and coderivatives, we derive necessary dual conditions for metric subregularity expressed as fuzzy inclusions, and we also obtain characterizations of Asplund spaces via these fuzzy inclusions.  

As an application, we examine  metric subregularity of  {the conic inequality} defined by a vector-valued function and a closed (not necessarily convex) cone with a nontrivial recession cone.  
By using Mordukhovich and Fr\'echet subdifferentials relative to the given cone, we establish necessary dual conditions for the metric subregularity of such inequalities in Asplund spaces.  
The results based on Mordukhovich subdifferentials characterize Asplund spaces, while those based on Fr\'echet subdifferentials yield necessary or sufficient conditions for Asplund spaces.  
These conditions recover, as special cases, the known error-bound results for inequalities defined by extended-real-valued functions on Asplund spaces.  

Overall, this work highlights that the validity of necessary conditions formulated via normal cones and subdifferentials for error bounds of convex or nonconvex inequalities depends crucially on the Asplund property of the underlying space.

\keywords{Metric subregularity\and Limiting $\mathbf{BCQ} $  \and Multifunction \and Coderivative \and Asplund space }

\subclass{ 90C31\and 90C25\and 49J52\and 46B20}
\end{abstract}

\section{Introduction}
The main objective of this paper is to investigate the metric subregularity of multifunctions between Asplund spaces and to derive necessary dual conditions using coderivatives and normal cones. Based on these results, we establish new characterizations of Asplund spaces in terms of such dual conditions.

As highlighted in Ioffe's monograph \cite{Ioffe2017}, the concept of metric subregularity---introduced as ``regularity at a point'' in \cite{Ioffe2000} and later popularized under this name by Dontchev and Rockafellar \cite{DonRoc2004}---has emerged as one of the most fundamental notions in variational analysis and optimization.

Given a multifunction $F:\mathbb{X}\rightrightarrows\mathbb{Y}$ between Banach spaces $\mathbb{X},\mathbb{Y}$, recall from \cite{DonRoc2004} that $F$ is said to be \emph{metrically subregular} at $(\bar x,\bar y)\in{\rm gph}(F):=\{(u,v)\in\mathbb{X}\times\mathbb{Y}: v\in F(u)\}$ if there exists $\tau\in (0,+\infty)$ such that 
\begin{equation}\label{1.1-250624}
	{\bf d}(x, F^{-1}(\bar y))\leq \tau\, {\bf d}(\bar y, F(x))  
	\quad \text{for all $x$ close to $\bar x$}.
\end{equation}
This property provides an upper bound for the distance from a point $x$ to the solution set $F^{-1}(\bar y):=\{x \in \mathbb{X}: \bar y \in F(x)\}$.

The study of metric subregularity originates from the celebrated Lyusternik-Graves theorem \cite{Lyusternik1934,Graves1950}. Since then, it has been extensively developed in numerous monographs, including \cite{DonRoc09,Mordukhovich,Ioffe2017,Pen13,thibault}, and investigated under various names (see, e.g., \cite{Aze06,DL,HenrionJourani2002-SIAM,HenrionJouraniOutrata2002-SIAM,HenrionOutrata2004-MP,Ioffe2000,IoffeOutrata2008-SVA,ZhengNg2010-SIAM,ZhengNg2012-NA}).

As observed by Henrion and Outrata \cite{HenOut2005}, $F$ is metrically subregular at $(\bar x ,\bar y)$ if and only if its inverse $M:=F^{-1}$ is \emph{calm} at $(\bar y, \bar x)$; that is, there exists $\tau>0$ such that
\begin{equation}\label{1.3}
	{\bf d}(x, M(\bar y))\leq \tau\, \|y-\bar y\|
	\quad \text{for all $x$ close to $\bar x$}.
\end{equation}

The first fundamental finite-dimensional result of type \eqref{1.1-250624} is due to Hoffman \cite{Hoffman1952}, who proved that for a linear system $Ax\leq b$, the distance from any point $x$ to the solution set is bounded above by a constant times the norm of the residual $\max \{Ax-b, 0\}$.

Given a proper lower semicontinuous function $f: \mathbb{X}\to\mathbb{R}\cup\{+\infty\}$, define $F(x):=[f(x),+\infty)$ for $x\in \mathbb{X}$ and let 
\[
\bar x\in  \mathbf{S}_f:=\{x\in \mathbb{X}: f(x)\leq 0\}.
\]
Then the metric subregularity of $F$ at $(\bar x,0)$ reduces to the error bound inequality
\begin{equation}\label{1.1}
	{\bf d}(x,\mathbf{S}_f)\leq \tau f_+(x)\quad 
	\text{for all $x$ close to $\bar x$},
\end{equation}
where $f_+(x):=\max\{f(x),0\}$. Such Hoffman-type estimates, now widely known as \emph{error bounds}, form an essential part of modern optimization theory and have been extensively studied (see, e.g., \cite{AC1988,BD2,ioffe-JAMS-1,ioffe-JAMS-2,K2,Pen13,Robinson1975,WTY2024-SVVA}).

It is worth noting that Lewis and Pang \cite{LewisPang1998} studied dual necessary conditions, expressed in terms of normal cones and subdifferentials, for the error bounds of convex inequality systems. In particular, they proved that the basic constraint qualification ($\mathbf{BCQ}$) for a convex inequality constitutes a necessary condition.

For a continuous convex function $f$ defined on $\mathbb{X}$, recall that the convex inequality $ f(x)\leq 0$ is said to satisfy $\mathbf{BCQ}$ at $x\in \bd(\mathbf{S}_f)$ (the boundary of $\mathbf{S}_f$) if
\begin{equation}\label{1.2}
	\mathbf{N}(\mathbf{S}_f,x)=[0,+\infty)\partial f(x),
\end{equation}
where $\mathbf{N}(\mathbf{S}_f,x)$ and $\partial f(x)$ denote, respectively, the normal cone and the subdifferential in convex analysis. The condition $\mathbf{BCQ}$ is fundamental in convex analysis and closely related to important notions in optimization and approximation, such as the Slater condition, the strong conical hull intersection property (strong CHIP), the Abadie constraint qualification (ACQ), and KKT optimality conditions (see, e.g., \cite{H-UL1993,De,Li,LNS2000,LNP2008,FLN2010}).  

Zheng and Ng \cite{ZN2004} extended $\mathbf{BCQ}$ to general convex inequalities using singular subdifferentials, and later \cite{WY} further generalized it to nonconvex settings via Clarke and Fr\'echet subdifferentials.
 {Further,  Zheng and Ng \cite{ZN2007}   generalized the $\mathbf{BCQ}$ to the convex multifunctions case and used it to provide dual characterizations in terms of normal cones and coderivatives for the metric subregularity of convex multifunctions}.

In this paper, we introduce a limiting $\mathbf{BCQ}$ for closed multifunctions, formulated in terms of Mordukhovich normal cones and coderivatives. Our main result establishes that this limiting $\mathbf{BCQ}$ is a necessary condition for the metric subregularity of multifunctions between Asplund spaces. Moreover, we obtain characterizations of Asplund spaces based on $\mathbf{BCQ}$ conditions implied by metric subregularity (see \cref{th3.1}).  

Using Fr\'echet normal cones and coderivatives, we also show that metric subregularity of closed multifunctions between Asplund spaces implies certain fuzzy inclusions, which yield further characterizations of Asplund spaces (see \cref{theorem3.2}).  

As an application, we study the metric subregularity of a conic inequality defined by a vector-valued function and a closed (not necessarily convex) cone with nontrivial recession cone. By employing Mordukhovich and Fr\'echet subdifferentials relative to the cone, we show that exact and fuzzy inclusions are, respectively, implied by metric subregularity in Asplund spaces.  

The validity of exact inclusions leads to characterizations of Asplund spaces, whereas the fuzzy inclusions yield necessary or sufficient conditions for Asplund spaces (see \cref{th4.1,theorem4.2}).

The paper is organized as follows. In section 2, we present definitions and preliminary
results used throughout the paper. Section 3 is devoted to the main results on metric subregularity of closed multifunctions between Asplund spaces and some new characterizations for Asplund spaces. As an application, the metric subregularity of conic inequalities is studied in section 4. The conclusions of this paper is presented in section 5.
 \medskip
 
\section{Notation, definitions and  background material}


Let $\mathbb{X}$ and $\mathbb{Y}$ be  Banach spaces with duals $\mathbb{X}^*$ and $\mathbb{Y}^*$, respectively.  The duality pairing between a Banach space $\mathbb{X}$ and its dual $\mathbb{X}^*$ is denoted by \(\langle \cdot, \cdot \rangle\).  We use $\|\cdot\|$ to denote the norm in $\mathbb{X}$. The closed unit ball of $\mathbb{X}$ is denoted by $\mathbf{B}_{\mathbb{X}}$. Let $\mathbf{B}_{\mathbb{X}^*}$ and $\mathbf{S}_{\mathbb{X}^*}$ the closed unit ball and the sphere of $\mathbb{X}^*$, respectively. For any $x\in\mathbb{X}$ and $\delta>0$, we denote by ${\bf B}(x,\delta)$ the open ball with center $x$ and radius $\delta$.
	 
	For a set $A \subseteq \mathbb{X}$, we write $\overline{A}$ for its closure, $\mathrm{int}(A)$ for its interior, and $\mathrm{conv}(A)$ for its convex hull. The distance from a point $x \in \mathbb{X}$ to a set $A$ is  
	\[
	\mathbf{d}(x,A) := \inf_{a \in A} \|x-a\|,
	\]
	with the convention $\mathbf{d}(x,\emptyset) := +\infty$.

Given a nonempty set $S\subseteq \mathbb{X}$ (not necessarily convex)  the recession cone of $S$ is defined to be the set
$$S^{\infty}:=\{u\in \mathbb{X} : s+tu\in S, \forall s\in S, \forall t\geq 0 \}.$$ When $S$ is  convex, it is { known }from \cite{Zalinescu}  that 
$$S^{\infty}=\bigcap_{t>0}t(S-s) $$
for each $s\in S$.

Given a multifunction $F: \mathbb{X} \rightrightarrows \mathbb{Y}$ between $\mathbb{X} $ and $ \mathbb{Y}$, we denote by   
\[
{{\rm gph}(F)} := \{(x,y) \in \mathbb{X} \times \mathbb{Y} \;:\; y \in F(x)\},
\]
the {\it graph} of $F$. Its domain is ${\rm dom}(F):= \{x \in \mathbb{X} : F(x) \neq \emptyset\}$,  
and its inverse is defined by 
$$F^{-1}(y) := \{x \in \mathbb{X} : y \in F(x)\}, \ \forall y\in\mathbb{Y}.$$

{Given  a multifunction $\Phi:\mathbb{X}\rightrightarrows\mathbb{X}^*$ between  $\mathbb{X}$ and $\mathbb{X}^*$},  
the \emph{sequential Painlev\'{e}--Kuratowski outer (upper) limit} of $\Phi$ at $x$ is defined by
\begin{equation*}
\mathop{\rm Limsup}_{y\to x}\Phi(y)
:= \left\{ x^* \in \mathbb{X}^* \ \middle| \
\begin{array}{l}
\exists\, (x_n) \to x,\ \exists\, (x_n^*) \stackrel{w^*}{\longrightarrow} x^* \ \text{with} \\
x_n^* \in \Phi(x_n) \ \text{for all } n\in\mathbb{N}
\end{array}
\right\}.
\end{equation*}

\medskip
\noindent
\textbf{Normal cones.}

For a closed set $\Omega \subseteq \mathbb{X}$ and $\bar{x} \in \Omega$:
\begin{itemize}
    \item The \emph{Fr\'echet normal cone} to $\Omega$ at $\bar{x}$ is  
    \[
    \widehat{\bf N}(\Omega, \bar{x}) := \left\{ x^* \in \mathbb{X}^* \; : 
    \limsup_{x \xrightarrow{\Omega} \bar{x}} \frac{\langle x^*, x - \bar{x} \rangle}{\|x - \bar{x}\|} \le 0 \right\},
    \]
     {where $x \xrightarrow{\Omega} \bar{x}$ means that $x\rightarrow\bar x$ with $x\in\Omega$.}
    \item 
   For $\varepsilon\geq 0$, the set of $\varepsilon$-{normals}  to $\Omega$ at $\bar x$ is
$$
\widehat{\mathbf{N}}_{\varepsilon}(\Omega, \bar x):= {\left\{x^*\in \mathbb{X}^*:\limsup\limits_{x \xrightarrow{\Omega} \bar x}\frac{\langle x^*, x-\bar x\rangle}{\|x-\bar x\|}\leq \varepsilon\right\}}.
$$
 When $\varepsilon=0$, $\widehat{\mathbf{N}}_{0}(\bar x,\Omega)$ coincides with $ {\widehat{\bf N}(\Omega, \bar{x})}$.
      \item The \emph{Mordukhovich (limiting) normal cone} is  
\begin{equation}\label{2.1}
   {{\bf N}(\Omega, \bar{x}) := \mathop{\rm Limsup}_{x\stackrel{\Omega}\longrightarrow \bar x, \varepsilon \downarrow 0}\widehat{\mathbf{N}}_{\varepsilon} {(\Omega, x)}}.
\end{equation}
   Thus, $x^*\in {\bf N}(\Omega,\bar x)$ if and only if there exists a sequence $\{(\varepsilon_k,x_k,x_k^*)\}$ in $(0,+\infty)\times\Omega\times \mathbb{X}^*$ such that $\varepsilon_k\rightarrow 0^+, x_k\stackrel{w^*}{\rightarrow}x^*$ and $x_k^*\in\widehat{\mathbf{N}}_{\varepsilon} {(\Omega, x_k)}$ for all $n$.
\end{itemize}
It is known from  \cite{C} and \cite{Mordukhovich} that
$$ {\widehat{\bf N}(\Omega, \bar x)\subseteq {\bf N}(\Omega, \bar x)}.$$
If $ {\Omega}$ is convex, Fr\'echet and limiting normal cones coincide and reduce to the normal cone in the sense of convex analysis; that is
$$
 {\widehat{\bf N}(\Omega, \bar x) = {\bf N}(\Omega, \bar x)=\{x^*\in \mathbb{X} ^*:\;\langle
x^*,x-\bar x\rangle\leq 0\;\; {\rm for\ all} \ x\in \Omega\}}.
$$

\medskip

\noindent
\textbf{Asplund spaces.}  

Recall that a Banach space is called an Asplund space if every continuous convex function on $\mathbb{X}$  is Fr\'echet differentiable at each point of a dense subset of $\mathbb{X}$ (see \cite{Phelps} for definitions and their equivalences). {It is known that a Banach space is an Asplund  space if and only if every separable closed subspace has a separable dual. In particular, every reflexive space is an Asplund space}. For the case when $\mathbb{X}$  is an Asplund space, Mordukhovich and Shao \cite{MS} have proved that
\begin{equation*}
 {	{\bf N}(\Omega, \bar x)=\mathop{\rm Limsup}_{x\xrightarrow{\Omega}
		\bar x}\widehat{\bf N}(\Omega, x)}.
\end{equation*}
This means that $x^*\in  {	{\bf N}(\Omega, \bar x)}$ if and only if there exist $x_n {\xrightarrow{\Omega}
\bar x}$ and $x^*_n\xrightarrow{w^*} x^*$ such that $x_n^*\in  {\widehat{\bf N}(\Omega, x_n)}$ for all $n$.

\medskip

\noindent
\textbf{Subdifferentials.}
  
Let $\varphi : \mathbb{X} \to \mathbb{R} \cup \{+\infty\}$ be a proper lower semicontinuous function.  {We denote by 
$${\rm epi}(\varphi):= \{(u,r) \in \mathbb{X} \times \mathbb{R}: \varphi(u) \leq r\}$$
the {\it epigraph} of $\varphi$}. For $x \in \mathrm{dom}(\varphi):= \{y \in \mathbb{X} : \varphi(y) < +\infty\}$, we denote by $\widehat{\partial} \varphi(x)$ and $\partial \varphi(x)$ the \emph{Fr\'echet} and \emph{limiting} (Mordukhovich) subdifferentials of $\varphi$ at $x$ and they are given by
\[\widehat{\partial} \varphi(x) := \left\{x^* \in \mathbb{X}^* : (x^*, -1) \in \widehat{\bf N}\big(\mathrm{epi}(\varphi), (x, \varphi(x))\big) \right\},
\]
\[
\partial \varphi(x) := \left\{x^* \in \mathbb{X}^* : (x^*, -1) \in {\bf N}\big(\mathrm{epi}(\varphi), (x, \varphi(x))\big) \right\}.
\]


It is straightforward to verify that  
\[
\widehat{\partial} \varphi(x) =
\left\{x^* \in \mathbb{X}^* : \liminf_{y \to x} 
\frac{\varphi(y) - \varphi(x) - \langle x^*, y - x \rangle}{\|y - x\|} \ge 0 \right\}.
\]  

When $\mathbb{X}$ is an \emph{Asplund space}, Mordukhovich and Shao \cite{MS} proved that  
\[
\partial \varphi(x) = \mathop{\mathrm{Limsup}}_{y \xrightarrow{\varphi} x} \, \widehat{\partial} \varphi(y),
\]
where $y \xrightarrow{\varphi} x$ means $y \to x$ with $\varphi(y) \to \varphi(x)$.  
Equivalently, $x^* \in \partial \varphi(x)$ if and only if there exist sequences $x_n \xrightarrow{\varphi} x$ and $x_n^* \xrightarrow{w^*} x^*$ with $x_n^* \in \widehat{\partial} \varphi(x_n)$ for all $n$.

\medskip
\noindent
%
%

\textbf{Coderivatives.}  

For a multifunction $F: \mathbb{X} \rightrightarrows \mathbb{Y}$ and a point $(\bar{x}, \bar{y}) \in \mathrm{gph}(F)$, the \emph{Mordukhovich coderivative} of $F$ at $(\bar{x}, \bar{y})$ is defined by  
\[
D^*F(\bar{x},\bar{y})(y^*) := 
\left\{x^* \in \mathbb{X}^* : (x^*, -y^*) \in {\bf N}\big(\mathrm{gph}(F), (\bar{x}, \bar{y})\big)\right\}, 
\quad y^* \in \mathbb{Y}^*.
\]  
Similarly, the \emph{Fr\'echet coderivative} of $F$ at $(\bar{x}, \bar{y})$ is given by  
\[
\widehat{D}^*F(\bar{x},\bar{y})(y^*) := 
\left\{x^* \in \mathbb{X}^* : (x^*, -y^*) \in \widehat{\bf N}\big(\mathrm{gph}(F), (\bar{x}, \bar{y})\big)\right\}, 
\quad y^* \in \mathbb{Y}^*.
\]  
\medskip

\medskip
We begin by recalling a fundamental result due to Fabian~\cite[Theorem~3]{F}, 
often referred to as the \emph{fuzzy sum rule} together with the density property 
for Fr\'echet subgradients of lower semicontinuous functions in Asplund spaces. 
For further details, the reader may consult 
\cite[Proposition~2.7]{MS}, \cite[Theorem~2.33]{Mordukhovich}, 
and the comments in Thibault~\cite[4.8]{thibault}.

\begin{lemma}[Fuzzy sum rule for Fr\'echet subdifferentials in Asplund spaces]\label{lem2.1}
Let $\mathbb{X}$ be an Asplund space and let 
$\varphi_i : \mathbb{X} \to \mathbb{R} \cup \{+\infty\}$, $i=1,2$, 
be proper lower semicontinuous functions, with at least one of them locally Lipschitz 
at some point $\bar{x} \in \mathrm{dom}(\varphi_1) \cap \mathrm{dom}(\varphi_2)$. 
Then, for any $\varepsilon > 0$, we have
\[
\widehat{\partial}(\varphi_1 + \varphi_2)(\bar{x}) 
\subseteq \bigcup \left\{
\widehat{\partial} \varphi_1(x_1) + \widehat{\partial} \varphi_2(x_2) :
\begin{array}{l}
x_i \in \mathbf{B}(\bar{x},\varepsilon),\\[0.1cm]
|\varphi_i(x_i) - \varphi(\bar{x})| < \varepsilon, \ i=1,2
\end{array}
\right\}
+ \varepsilon \, \mathbf{B}_{\mathbb{X}^*}.
\]
\end{lemma}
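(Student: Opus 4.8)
The plan is to follow the variational route going back to Fabian. Fix $\varepsilon>0$ and a point $x^*\in\widehat{\partial}(\varphi_1+\varphi_2)(\bar x)$; assume $\varphi_2$ is $\ell$-Lipschitz on a neighbourhood of $\bar x$. First I normalize: replacing $\varphi_1$ by $u\mapsto\varphi_1(\bar x+u)-\varphi_1(\bar x)-\langle x^*,u\rangle$ and $\varphi_2$ by $u\mapsto\varphi_2(\bar x+u)-\varphi_2(\bar x)$ — a translation together with the subtraction of a continuous linear functional, under which Fr\'echet subgradients merely translate — one may assume $\bar x=0$, $\varphi_1(0)=\varphi_2(0)=0$, $x^*=0$, and hence $0\in\widehat{\partial}(\varphi_1+\varphi_2)(0)$. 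By the $\liminf$ characterization of $\widehat{\partial}$ recalled above, $\varphi_1(u)+\varphi_2(u)\ge-\tfrac{\varepsilon}{8}\|u\|$ for all $u$ in some ball $\rho\mathbf{B}_{\mathbb{X}}$.

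Next I decouple the two summands. On the product $\mathbb{X}\times\mathbb{X}$, which is again Asplund, consider the proper lower semicontinuous function $f(u,v):=\varphi_1(u)+\varphi_2(v)+\beta\|u-v\|^2$ restricted to a small closed ball $\mathbf{B}$ about the origin, where $\beta>0$ is a large coupling weight and — this is the point at which the Asplund hypothesis enters — the norm of $\mathbb{X}$ has been replaced by an equivalent Fr\'echet-smooth one, so that $\|\cdot\|^2$, and hence the coupling term, is Fr\'echet-smooth. Combining the Lipschitz bound on $\varphi_2$ with the inequality of the previous paragraph, one sees that $f$ is bounded below on $\mathbf{B}$ and that the origin is an approximate minimizer whose gap tends to $0$ as $\rho\downarrow0$ and $\beta\to+\infty$. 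Applying the Borwein--Preiss smooth variational principle then yields $(\hat u,\hat v)$ in the interior of $\mathbf{B}$, with $\|\hat u\|,\|\hat v\|<\varepsilon$ and $\|\hat u-\hat v\|$ small — so, using the Lipschitz bound again, $|\varphi_1(\hat u)|<\varepsilon$ and $|\varphi_2(\hat v)|<\varepsilon$ — together with a Fr\'echet-smooth perturbation $\Delta$ satisfying $\|\Delta'(\hat u,\hat v)\|<\varepsilon$ such that $f+\Delta$ attains a local minimum at $(\hat u,\hat v)$. Hence $0\in\widehat{\partial}(f+\Delta)(\hat u,\hat v)$; peeling off the two smooth terms $\beta\|u-v\|^2$ and $\Delta$ (the Fr\'echet subdifferential of a sum with a smooth function being exact) and using that $\widehat{\partial}$ of the separated sum $(u,v)\mapsto\varphi_1(u)+\varphi_2(v)$ lies inside $\widehat{\partial}\varphi_1(\hat u)\times\widehat{\partial}\varphi_2(\hat v)$, I obtain $x_1^*\in\widehat{\partial}\varphi_1(\hat u)$ and $x_2^*\in\widehat{\partial}\varphi_2(\hat v)$ with $(x_1^*,x_2^*)=-\nabla\big(\beta\|u-v\|^2\big)(\hat u,\hat v)-\Delta'(\hat u,\hat v)$. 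The decisive observation is that $\nabla\big(\beta\|u-v\|^2\big)(\hat u,\hat v)$ has the form $(p,-p)$, equal and opposite in the two coordinates; summing the coordinate relations therefore cancels this (potentially large) term and leaves $\|x_1^*+x_2^*\|=\|\Delta'(\hat u,\hat v)\|<\varepsilon$. Undoing the normalization (replacing $x_1^*$ by $x_1^*+x^*$ and $\hat u,\hat v$ by $\bar x+\hat u,\bar x+\hat v$) produces a pair $x_1,x_2$ with $x^*\in\widehat{\partial}\varphi_1(x_1)+\widehat{\partial}\varphi_2(x_2)+\varepsilon\mathbf{B}_{\mathbb{X}^*}$ obeying the proximity and value constraints; since $\varepsilon>0$ was arbitrary (after the harmless rescalings needed to absorb $\|x^*\|$ into the constants), the claimed inclusion follows.

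The genuine obstacle is the step where one replaces the norm of $\mathbb{X}$ by an equivalent Fr\'echet-smooth norm: a general Asplund space admits no Fr\'echet-smooth bump, so the smooth variational principle is not directly available. This is handled by separable reduction — one isolates the countably much data actually used (the base point, a minimizing net, and countably many functionals witnessing the relevant near-subgradient inequalities), enlarges a separable closed subspace $\mathbb{X}_0$ of $\mathbb{X}$ so as to absorb all of it (iterating countably often and closing up), uses that $\mathbb{X}_0^*$ is separable because $\mathbb{X}$ is Asplund — whence $\mathbb{X}_0$ carries an equivalent Fr\'echet-smooth norm — runs the argument inside $\mathbb{X}_0$, and finally verifies that the Fr\'echet subgradients obtained there remain Fr\'echet subgradients over all of $\mathbb{X}$. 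Making this transfer rigorous is the technical core of the proof; by contrast, the normalization, the penalization/decoupling, and the extraction of the two subgradients above are routine once the smooth machinery is in place. (Alternatively, one may simply invoke Fabian's theorem, which is the stated provenance of the lemma.)
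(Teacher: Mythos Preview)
The paper does not prove this lemma at all: it is stated as a known result and attributed to Fabian (with further references to Mordukhovich--Shao, Mordukhovich's monograph, and Thibault), and is used throughout the paper as a black box. So there is no ``paper's own proof'' to compare against; your closing parenthetical --- ``one may simply invoke Fabian's theorem'' --- is precisely what the paper does.

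That said, your sketch is a faithful outline of Fabian's original argument: normalize, decouple on the product via a quadratic penalty, apply a smooth variational principle, exploit the antisymmetry of the gradient of the coupling term to cancel the large penalty contribution, and handle the lack of a Fr\'echet-smooth bump on a general Asplund space by separable reduction. You have correctly identified the genuine difficulty (the smooth variational principle is not directly available) and the standard remedy. One small quibble: in the normalization step you should say the final $\varepsilon$-bound on $\|x_1^*+x_2^*\|$ is obtained by choosing the auxiliary parameters appropriately at the outset, not by ``rescaling to absorb $\|x^*\|$'' afterwards --- the latter phrasing is a bit loose, since $x^*$ has already been subtracted off and plays no further role in the size estimates.
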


\medskip

\noindent
The next lemma, taken from Ngai and Th\'era~\cite[Proposition~2.1]{NT2001}, will also be used in the sequel.

\begin{lemma}\label{lem2.2a}
Let $\mathbb{X}$ be a Banach space, $\varphi : \mathbb{X} \to \mathbb{R} \cup \{+\infty\}$ 
be a proper lower semicontinuous function, and let $(\bar{x},\alpha) \in \mathrm{epi}(\varphi)$. 
Then, for any $\lambda \neq 0$, the following equivalence holds:
\[
(x^* , -\lambda) \in \widehat{N}(\mathrm{epi}(\varphi), (\bar{x},\alpha))
\ \Longleftrightarrow\ 
\lambda > 0, \quad \alpha = \varphi(\bar{x}), \quad \frac{x^*}{\lambda} \in \widehat{\partial} \varphi(\bar{x}).
\]
\end{lemma}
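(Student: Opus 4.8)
The statement to prove is \cref{lem2.2a}, the epigraphical characterization of Fr\'echet subgradients: for $\lambda\neq 0$,
\[
(x^*,-\lambda)\in\widehat{N}(\mathrm{epi}(\varphi),(\bar x,\alpha))\iff \lambda>0,\ \alpha=\varphi(\bar x),\ x^*/\lambda\in\widehat\partial\varphi(\bar x).
\]

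The plan is to argue both implications directly from the analytic definition of the Fr\'echet normal cone, using the fact that the epigraph contains vertical upward rays and graph-like points $(\bar x,\varphi(\bar x))$.

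\textbf{Necessity of $\lambda>0$ and $\alpha=\varphi(\bar x)$.} Suppose $(x^*,-\lambda)\in\widehat N(\mathrm{epi}(\varphi),(\bar x,\alpha))$. First I would show $\alpha=\varphi(\bar x)$: since $\varphi$ is lower semicontinuous and $(\bar x,\alpha)\in\mathrm{epi}(\varphi)$, we have $\varphi(\bar x)\le\alpha$; if $\varphi(\bar x)<\alpha$, then $(\bar x,\alpha)$ is an interior point of $\mathrm{epi}(\varphi)$ along the vertical direction, so taking $x_t=(\bar x,\alpha-t)\in\mathrm{epi}(\varphi)$ for small $t>0$ gives the difference quotient $\langle(x^*,-\lambda),(0,-t)\rangle/\|(0,-t)\|=\lambda$, forcing $\lambda\le 0$; but also taking $x_t=(\bar x,\alpha+t)$ gives quotient $-\lambda\le0$, so $\lambda=0$, contradiction with $\lambda\neq0$. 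Hence $\alpha=\varphi(\bar x)$. Next, moving straight up from $(\bar x,\varphi(\bar x))$ via $(\bar x,\varphi(\bar x)+t)\in\mathrm{epi}(\varphi)$ forces $-\lambda\le0$, i.e. $\lambda\ge0$, hence $\lambda>0$.

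\textbf{Equivalence of the subgradient condition.} Once we know $\lambda>0$ and $\alpha=\varphi(\bar x)$, I would unwind the $\limsup$ defining $\widehat N$ over points $(x,r)\in\mathrm{epi}(\varphi)$ near $(\bar x,\varphi(\bar x))$. The condition
\[
\limsup_{(x,r)\xrightarrow{\mathrm{epi}\varphi}(\bar x,\varphi(\bar x))}\frac{\langle x^*,x-\bar x\rangle-\lambda(r-\varphi(\bar x))}{\|x-\bar x\|+|r-\varphi(\bar x)|}\le0
\]
is equivalent, by restricting to the ``lowest'' admissible points $r=\varphi(x)$ (and noting that raising $r$ only makes the numerator smaller since $\lambda>0$, so the sup is attained in the limit along $r\downarrow\varphi(x)$), to
\[
\limsup_{x\to\bar x}\frac{\langle x^*/\lambda,x-\bar x\rangle-(\varphi(x)-\varphi(\bar x))}{\|x-\bar x\|}\le0,
\]
which is exactly $x^*/\lambda\in\widehat\partial\varphi(\bar x)$ in its analytic form $\liminf_{x\to\bar x}\frac{\varphi(x)-\varphi(\bar x)-\langle x^*/\lambda,x-\bar x\rangle}{\|x-\bar x\|}\ge0$. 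The delicate point in this step is the passage between the two-variable $\limsup$ over the epigraph and the one-variable $\limsup$ over $x$: one must check both that points $(x,\varphi(x))$ with $x\to\bar x$ indeed converge to $(\bar x,\varphi(\bar x))$ in the epigraph sense, using lower semicontinuity to control $\varphi(x)$ from below along with an upper estimate coming from the normal cone inequality itself, and that for general $(x,r)$ with $r\ge\varphi(x)$ the quotient is dominated by the $r=\varphi(x)$ case up to equivalence of the norms $\|x-\bar x\|+|r-\varphi(\bar x)|$ versus $\|x-\bar x\|$.

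\textbf{Sufficiency.} For the converse, assuming $\lambda>0$, $\alpha=\varphi(\bar x)$, and $x^*/\lambda\in\widehat\partial\varphi(\bar x)$, I would reverse the computation: for any $(x,r)\in\mathrm{epi}(\varphi)$, $r\ge\varphi(x)$, so $\langle x^*,x-\bar x\rangle-\lambda(r-\varphi(\bar x))\le\langle x^*,x-\bar x\rangle-\lambda(\varphi(x)-\varphi(\bar x))=\lambda\big(\langle x^*/\lambda,x-\bar x\rangle-(\varphi(x)-\varphi(\bar x))\big)$, and dividing by $\|x-\bar x\|+|r-\varphi(\bar x)|\ge\|x-\bar x\|$ and taking the $\limsup$ yields $\le0$ from the subgradient inequality, establishing $(x^*,-\lambda)\in\widehat N(\mathrm{epi}(\varphi),(\bar x,\varphi(\bar x)))$. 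The main obstacle overall is the bookkeeping in the necessity direction around the norm equivalence and the lower-semicontinuity argument ensuring $\varphi(x)\to\varphi(\bar x)$ along relevant sequences; the sufficiency direction is a routine one-line estimate.
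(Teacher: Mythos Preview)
The paper does not prove \cref{lem2.2a} at all: it is simply quoted from \cite[Proposition~2.1]{NT2001}, so there is no argument to compare against. Your direct proof from the analytic definition of $\widehat N$ is the standard one and is correct.

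Two small comments on the execution. In the necessity direction, when you restrict to points $(x,\varphi(x))$ with $\varphi(x)<\varphi(\bar x)$, you correctly flag that one needs a linear bound on $\varphi(\bar x)-\varphi(x)$ in terms of $\|x-\bar x\|$; this bound indeed comes from the Fr\'echet normal inequality itself applied at $(x,\varphi(x))$, which rearranges to $(\lambda-\varepsilon)(\varphi(\bar x)-\varphi(x))\le (\varepsilon+\|x^*\|)\|x-\bar x\|$ once $\varepsilon<\lambda$. (Lower semicontinuity alone only gives $\varphi(x)\to\varphi(\bar x)$, not the linear rate.) In the sufficiency direction your one-line estimate is fine as written. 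So the sketch would convert to a full proof with only routine bookkeeping.
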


\medskip

\noindent
We also recall the following technical lemma (see \cite[Lemma~3.6]{NT2001} 
and \cite[Lemma~3.7]{AusDanThi05} for details), which will be useful in our analysis.

\begin{lemma}\label{lem2.2}
Let $\mathbb{X}$ be an Asplund space and $A \subseteq \mathbb{X}$ be a nonempty closed set. 
If $x \in \mathbb{X} \setminus A$ and $x^* \in {\widehat{\partial}{\bf d}(\cdot, A)(x)}$, then for any $\varepsilon > 0$ 
there exist $a \in A$ and $a^* \in \widehat{\mathbf{N}}(A, a)$ such that
\[
\|x-a\| < {\bf d}(x, A) + \varepsilon 
\quad \text{and} \quad 
\|x^* - a^*\| < \varepsilon.
\]
\end{lemma}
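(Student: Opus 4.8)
\medskip

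The plan is to reduce the statement to the fuzzy sum rule (\cref{lem2.1}) applied to a suitable decomposition of the distance function, combined with the basic fact that $\widehat{\partial}{\bf d}(\cdot,A)(a)$ is related to $\widehat{\bf N}(A,a)$ at points $a\in A$. Fix $x\in\mathbb{X}\setminus A$, a Fr\'echet subgradient $x^*\in\widehat{\partial}{\bf d}(\cdot,A)(x)$, and $\varepsilon>0$. First I would recall the elementary observation that ${\bf d}(\cdot,A)$ is globally Lipschitz with constant $1$, so that $\|x^*\|\le 1$. The key geometric idea is that near a point $x\notin A$ the function ${\bf d}(\cdot,A)$ can be written, up to a controlled error, as an infimal expression involving the indicator $\iota_A$; more precisely, for any $a_0\in A$ with $\|x-a_0\|$ close to ${\bf d}(x,A)$, one has the sandwiching
\[
{\bf d}(y,A)\le \|y-a_0\|,\qquad {\bf d}(y,A)=\inf_{a\in A}\big(\|y-a\|\big)=\inf_{a\in\mathbb{X}}\big(\|y-a\|+\iota_A(a)\big).
\]
The standard route (as in \cite[Lemma~3.6]{NT2001}) is to apply the fuzzy sum rule in the product space $\mathbb{X}\times\mathbb{X}$ to the two functions $(y,a)\mapsto\|y-a\|$ and $(y,a)\mapsto\iota_A(a)$, whose sum, minimized over $a$, is ${\bf d}(\cdot,A)$; the first summand is globally Lipschitz, so the hypotheses of \cref{lem2.1} are met.

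\medskip

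Concretely, I would proceed as follows. Introduce $\psi:\mathbb{X}\times\mathbb{X}\to\mathbb{R}\cup\{+\infty\}$, $\psi(y,a):=\|y-a\|+\iota_A(a)$, so that ${\bf d}(y,A)=\inf_a\psi(y,a)$, and choose $\bar a\in A$ with $\|x-\bar a\|<{\bf d}(x,A)+\delta$ for a small $\delta>0$ to be fixed later (here one uses $x\notin A$ so the infimum defining the distance is a genuine approximate minimization). One checks that $x^*\in\widehat{\partial}{\bf d}(\cdot,A)(x)$ forces $(x^*,0)\in\widehat{\partial}\psi(x,\bar a)$ — this is where the approximate-minimality of $\bar a$ and the Lipschitz bound are used, and it is the step requiring the most care. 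Then apply \cref{lem2.1} to $\psi=\psi_1+\psi_2$ with $\psi_1(y,a)=\|y-a\|$ (locally Lipschitz) and $\psi_2(y,a)=\iota_A(a)$: for the prescribed tolerance one obtains points $(u_1,b_1),(u_2,b_2)$ near $(x,\bar a)$, subgradients $(p^*,q^*)\in\widehat{\partial}\psi_1(u_1,b_1)$ and $(0,r^*)\in\widehat{\partial}\psi_2(u_2,b_2)=\{0\}\times\widehat{\bf N}(A,b_2)$, such that $\|(x^*,0)-(p^*+0,\,q^*+r^*)\|<\varepsilon'$ for a suitably chosen $\varepsilon'$. Since $b_2\in A$ and is close to $\bar a$, and $\|x-\bar a\|<{\bf d}(x,A)+\delta$, the point $a:=b_2$ satisfies the first required estimate once $\delta$ is small enough.

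\medskip

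It remains to extract the dual estimate $\|x^*-a^*\|<\varepsilon$ with $a^*\in\widehat{\bf N}(A,a)$. From $(p^*,q^*)\in\widehat{\partial}\|\cdot-\cdot\|(u_1,b_1)$ with $u_1\ne b_1$ (true since $u_1$ is close to $x\notin A$ and $b_1$ close to $\bar a\in A$, and $x\ne\bar a$) one computes $q^*=-p^*$ and $\|p^*\|\le 1$ — in fact $\|p^*\|=1$ when $u_1\ne b_1$ — so from $p^*+0\approx x^*$ and $q^*+r^*\approx 0$ we get $r^*\approx -q^*=p^*\approx x^*$. Setting $a^*:=r^*\in\widehat{\bf N}(A,b_2)=\widehat{\bf N}(A,a)$ yields $\|x^*-a^*\|<\varepsilon$ after tracking how the various tolerances ($\varepsilon'$, the ball radius in \cref{lem2.1}, and $\delta$) combine; choosing all of them as small fixed fractions of $\varepsilon$ at the outset closes the argument. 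The main obstacle is the bookkeeping of the error terms through the product-space fuzzy sum rule and verifying the passage $(x^*,0)\in\widehat{\partial}\psi(x,\bar a)$; everything else is the routine subdifferential calculus of the norm and of indicator functions.
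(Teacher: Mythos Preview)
The paper does not prove this lemma; it is quoted from \cite[Lemma~3.6]{NT2001} and \cite[Lemma~3.7]{AusDanThi05}. Your product-space strategy (write ${\bf d}(\cdot,A)$ as the marginal of $\psi(y,a)=\|y-a\|+\iota_A(a)$ and apply the fuzzy sum rule) is precisely the route taken in those references, so the overall plan is the right one.

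There is, however, a genuine gap at the step you single out as ``requiring the most care'': the assertion $(x^*,0)\in\widehat{\partial}\psi(x,\bar a)$ is \emph{false} when $\bar a$ is merely an approximate (not exact) nearest point. If $\bar a$ is not a projection of $x$ onto $A$, one can decrease $\psi(x,\cdot)$ at first order by moving $a$ within $A$ toward a better point, so the $a$-component of any Fr\'echet subgradient of $\psi$ at $(x,\bar a)$ cannot vanish. A one-line check: with $A=[-1,0]\subset\mathbb{R}$, $x=1$, $x^*=1$, and $\bar a=-\delta$ for small $\delta>0$, one computes $\widehat{\partial}\psi(1,-\delta)=\{(1,-1)\}$, which does not contain $(1,0)$.

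What \emph{is} true is that $(x,\bar a)$ is a $\delta$-approximate minimizer of $(y,a)\mapsto\psi(y,a)-\langle x^*,y-x\rangle+\eta\|y-x\|$ over a suitable neighbourhood, and the standard remedy---this is what the cited proofs do---is Ekeland's variational principle: it supplies a nearby point $(\tilde x,\tilde a)$ with $\tilde a\in A$ at which a small Lipschitz perturbation of this function attains an \emph{exact} local minimum, so that $0$ lies in its Fr\'echet subdifferential there. Only then can \cref{lem2.1} be invoked to split the subdifferential. Once this Ekeland step is inserted, the remainder of your argument (the subdifferential of the norm giving $q^*=-p^*$, the identification $a^*:=r^*\in\widehat{\bf N}(A,a)$, and the bookkeeping of the tolerances) goes through as you describe.
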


The following lemma, cited from  Zheng and Ng \cite[Theorem 3.1]{ZN}, is used in our analysis.

\begin{lemma}\label{lem2.3}
	Let $\mathbb{X}$  be  an Asplund space,  $A$ be a nonempty closed
	subset of $ \mathbb{X}$ and $x\not\in A$. Then for any $\beta\in (0,\;1)$
	there exist $z\in A$ and $z^*\in
	 \widehat{\bf N}(A, z)$ with $\|z^*\|=1$ such that
	$$\beta\|x-z\|<\min\{\mathbf{d}(x, A),\;\langle z^*,x-z\rangle\}.$$
\end{lemma}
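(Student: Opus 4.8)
The plan is to apply Ekeland's variational principle to produce a near‑projection of $x$ onto $A$ at which a slightly perturbed distance functional is \emph{exactly} minimised, then to peel off a Fr\'echet normal to $A$ from the resulting subdifferential inclusion via the fuzzy sum rule (\cref{lem2.1}), and finally to normalise it and verify the two required inequalities with the auxiliary parameters tuned small.

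\emph{Step 1 (Ekeland).} Put $d:=\mathbf{d}(x,A)>0$ and let $f:=\|\cdot-x\|+\delta_{A}$, which is proper, lower semicontinuous (as $A$ is closed) and bounded below by $d$. Fix a small $\lambda\in(0,1)$, pick $a\in A$ with $\|x-a\|<d+\lambda^{2}$, and apply Ekeland's variational principle to $f$ with radius $\lambda$ and level $\lambda^{2}$: this yields $z_{0}\in A$ with $\|z_{0}-a\|\le\lambda$, $d\le\|x-z_{0}\|<d+\lambda^{2}$, and such that $z_{0}$ globally minimises $u\mapsto\|u-x\|+\delta_{A}(u)+\lambda\|u-z_{0}\|$. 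In particular $\|x-z_{0}\|\ge d>0$ and $0\in\widehat{\partial}(\varphi_{1}+\delta_{A})(z_{0})$, where $\varphi_{1}:=\|\cdot-x\|+\lambda\|\cdot-z_{0}\|$ is convex and Lipschitz.

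\emph{Step 2 (fuzzy sum rule).} Applying \cref{lem2.1} to $\varphi_{1}$ and $\delta_{A}$ at $z_{0}$, for any $\varepsilon'>0$ we obtain $x_{1}\in\mathbf{B}(z_{0},\varepsilon')$, $z\in A\cap\mathbf{B}(z_{0},\varepsilon')$, $v^{*}\in\widehat{\partial}\varphi_{1}(x_{1})$, $\zeta^{*}\in\widehat{\partial}\delta_{A}(z)=\widehat{\mathbf{N}}(A,z)$ and $b^{*}\in\mathbf{B}_{\mathbb{X}^{*}}$ with $v^{*}+\zeta^{*}+\varepsilon'b^{*}=0$. Since $\varphi_{1}$ is convex and continuous, the Moreau--Rockafellar sum rule gives $v^{*}=p^{*}+\lambda q^{*}$ with $q^{*}\in\mathbf{B}_{\mathbb{X}^{*}}$ and $p^{*}\in\partial\|\cdot-x\|(x_{1})$; as $\|x-z_{0}\|\ge d$ and $\varepsilon'$ is small, $x_{1}\ne x$, so $\|p^{*}\|=1$ and $\langle p^{*},x_{1}-x\rangle=\|x_{1}-x\|$. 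Writing $\kappa:=\lambda+\varepsilon'$ we conclude that $\zeta^{*}\in\widehat{\mathbf{N}}(A,z)$ satisfies $\|\zeta^{*}+p^{*}\|\le\kappa$, where $-p^{*}$ is a norm‑one functional with $\langle -p^{*},x-x_{1}\rangle=\|x-x_{1}\|$.

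\emph{Step 3 (normalisation and estimates).} From $\|{-p^{*}}\|=1$ and $\|\zeta^{*}+p^{*}\|\le\kappa$ we get $\zeta^{*}\ne0$ and $1-\kappa\le\|\zeta^{*}\|\le1+\kappa$, so $z^{*}:=\zeta^{*}/\|\zeta^{*}\|\in\widehat{\mathbf{N}}(A,z)$ has $\|z^{*}\|=1$. Since $z\in A$, $\|z-z_{0}\|,\|x_{1}-z_{0}\|\le\varepsilon'$ and $\|x-z_{0}\|<d+\lambda^{2}$, we have $d-2\varepsilon'\le\|x-z\|<d+\lambda^{2}+2\varepsilon'$, and, decomposing $x-z=(x-x_{1})+(x_{1}-z)$,
\[
\langle\zeta^{*},x-z\rangle\ \ge\ \langle -p^{*},x-z\rangle-\kappa\|x-z\|\ \ge\ \|x-x_{1}\|-2\varepsilon'-\kappa\|x-z\|\ \ge\ (1-\kappa)\|x-z\|-4\varepsilon',
\]
whence $\langle z^{*},x-z\rangle\ge\big((1-\kappa)\|x-z\|-4\varepsilon'\big)/(1+\kappa)$. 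It then suffices to choose $\lambda,\varepsilon'$ small (depending only on $\beta$ and $d$) so that $\beta(d+\lambda^{2}+2\varepsilon')<d$ and $\big(1-\beta-\kappa(1+\beta)\big)(d-2\varepsilon')>4\varepsilon'$, which force $\beta\|x-z\|<d$ and $\beta\|x-z\|<\langle z^{*},x-z\rangle$, respectively; as $\lambda,\varepsilon'\downarrow0$ the two left‑hand sides tend to $\beta d<d$ and $(1-\beta)d>0$, so such a choice is possible. The one delicate point is Step 2: to avoid relying on Fr\'echet differentiability of the norm of $\mathbb{X}$ (which may fail even in an Asplund space) one must invoke the \emph{exact} convex sum rule, and this is precisely what pins the extracted Fr\'echet normal $\zeta^{*}$ within $\kappa$ of a genuine norm‑one functional norming the direction $x-x_{1}$; everything else is bookkeeping with the small parameters.
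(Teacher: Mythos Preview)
The paper does not prove \cref{lem2.3}; it is quoted verbatim from Zheng and Ng \cite[Theorem~3.1]{ZN}. Your argument---Ekeland's principle applied to $\|\cdot-x\|+\delta_A$, followed by the fuzzy sum rule (\cref{lem2.1}) and the convex sum rule to split off a unit functional norming $x-x_1$, then normalisation---is correct and is precisely the standard route to such a result; the bookkeeping with $\lambda$ and $\varepsilon'$ is sound (your distance bounds $d-2\varepsilon'$ and $d+\lambda^{2}+2\varepsilon'$ are even slightly more conservative than necessary, which is harmless).
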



We conclude this section with the following lemma, cited from \cite[Lemma 3.1]{WTY2024-SVVA}.  

\begin{lemma}\label{lem3.2}
Let $\mathbb{X}, \mathbb{Y}$ be Banach spaces, $C \subseteq \mathbb{Y}$ a closed convex set, and $\Psi:\mathbb{X}\to\mathbb{Y}$ a continuously differentiable mapping. Let $\bar{x} \in A := \Psi^{-1}(C)$ be such that $\nabla \Psi(\bar{x})$ is surjective. Then there exist constants $\ell, L, r \in (0, +\infty)$ such that
\begin{equation*}\label{3-24-polished}
\widehat{\mathbf{N}}(A,x) \cap \ell \mathbf{B}_{\mathbb{X}^*} 
\subseteq \nabla \Psi(x)^* \Big( \mathbf{N}(C, \Psi(x)) \cap \mathbf{B}_{\mathbb{Y}^*} \Big)
\subseteq \widehat{\mathbf{N}}(A,x) \cap L \mathbf{B}_{\mathbb{X}^*}, 
\quad \forall x \in \mathbf{B}(\bar{x}, r) \cap A.
\end{equation*}
\end{lemma}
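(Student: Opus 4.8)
The plan is to reduce the statement, via a standard exact-penalization/reformulation argument, to a situation where the known characterization of the Fréchet normal cone to a preimage under a surjective smooth map applies, and then to exploit the Asplund structure through the fuzzy sum rule (\cref{lem2.1}) together with \cref{lem2.2a} to convert normal-cone membership into subdifferential membership. Concretely, I would first observe that since $\nabla\Psi(\bar x)$ is surjective and $C$ is closed and convex, the set $A=\Psi^{-1}(C)$ is locally the zero-level set of the composite function $x\mapsto \mathbf d(\Psi(x),C)$, which is Lipschitz near $\bar x$ (with constant controlled by $\|\nabla\Psi\|$ on a neighborhood). The surjectivity of $\nabla\Psi(\bar x)$, combined with the Lyusternik–Graves theorem, gives a local metric regularity estimate $\mathbf d(x,A)\le \kappa\,\mathbf d(\Psi(x),C)$ on some ball $\mathbf B(\bar x,r_0)$; this is what will let me pass between normals to $A$ and normals to $C$ pulled back through $\nabla\Psi(x)^*$.

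For the right-hand inclusion, I would start from $y^*\in \mathbf N(C,\Psi(x))\cap\mathbf B_{\mathbb Y^*}$ and set $x^*=\nabla\Psi(x)^*y^*$; since $C$ is convex, $\mathbf N(C,\Psi(x))$ is the convex-analysis normal cone, so $\langle y^*, c-\Psi(x)\rangle\le 0$ for all $c\in C$. Using the first-order Taylor expansion of $\Psi$ at $x$ and the metric regularity estimate to control how far $\Psi(x')$ is from $C$ for $x'\in A$ near $x$, one shows $\limsup_{x'\xrightarrow{A}x}\langle x^*,x'-x\rangle/\|x'-x\|\le 0$, i.e. $x^*\in\widehat{\mathbf N}(A,x)$; the norm bound $\|x^*\|\le L:=\sup_{\mathbf B(\bar x,r)}\|\nabla\Psi(\cdot)\|$ is immediate. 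For the left-hand inclusion, I would take $x^*\in\widehat{\mathbf N}(A,x)\cap \ell\mathbf B_{\mathbb X^*}$ (with $\ell$ to be chosen small), write $x^*\in\widehat\partial\,\delta_A(x)$ where $\delta_A$ is the indicator, and use $\delta_A = \delta_C\circ\Psi$ locally; apply the fuzzy sum rule / chain rule for Fréchet subdifferentials in the Asplund space $\mathbb X$ (a consequence of \cref{lem2.1} once one writes $\delta_C\circ\Psi$ as a sum involving the Lipschitzian buffer $\kappa\,\mathbf d(\Psi(\cdot),C)$, which is locally Lipschitz hence legitimizes the sum rule) to produce, for each $\varepsilon>0$, a point $y^*\in\mathbf N(C,\Psi(x_\varepsilon))$ at a nearby point $x_\varepsilon$ with $\|x^*-\nabla\Psi(x_\varepsilon)^*y^*\|<\varepsilon$; the surjectivity of $\nabla\Psi(\bar x)$ gives a uniform lower bound $\|\nabla\Psi(x)^*y^*\|\ge c\|y^*\|$ on $\mathbf B(\bar x,r)$, so choosing $\ell$ small forces $\|y^*\|\le 1$ after a limiting argument, and closedness of the graph of $x\mapsto\mathbf N(C,\Psi(x))$ under the relevant convergence (again using convexity of $C$) lets me pass to the limit $\varepsilon\downarrow 0$ and land exactly in $\nabla\Psi(x)^*(\mathbf N(C,\Psi(x))\cap\mathbf B_{\mathbb Y^*})$. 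Then one shrinks $r$ and $\ell$ so all estimates hold uniformly on $\mathbf B(\bar x,r)\cap A$.

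The main obstacle I anticipate is the uniformity: every ingredient — the metric regularity constant $\kappa$, the lower bound $c$ on $\|\nabla\Psi(x)^*\!\mid_{\text{relevant subspace}}\|$, the Lipschitz constant of the buffer function, and the radius on which the fuzzy sum rule can be applied while keeping the auxiliary points inside the good ball — must all be controlled on a single ball $\mathbf B(\bar x,r)$ independent of $x$, and then $\ell$ and $L$ extracted accordingly. This is handled by continuity of $\nabla\Psi$ and the openness of the surjective-operator condition (if $\nabla\Psi(\bar x)$ is onto, so is $\nabla\Psi(x)$ for $x$ near $\bar x$, with a uniform bound on a right inverse), but bookkeeping the dependence of $\varepsilon$-perturbations on the radius in \cref{lem2.1} requires care. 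A secondary technical point is ensuring the limiting step $\varepsilon\downarrow0$ stays at the fixed base point $x$ rather than drifting; this is where one uses that $\Psi$ is continuously differentiable (so $\nabla\Psi(x_\varepsilon)^*\to\nabla\Psi(x)^*$ in norm) together with weak$^*$ compactness of $\mathbf B_{\mathbb Y^*}$ and the weak$^*$-closedness of $\mathbf N(C,\Psi(x))$ for convex $C$.
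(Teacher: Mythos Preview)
The paper does not prove this lemma; it is quoted verbatim from \cite[Lemma~3.1]{WTY2024-SVVA} and used as a black box. So there is no ``paper's own proof'' to compare against directly, but the way the result is invoked later (e.g.\ in the proof of \cref{th3.4}, via \cite[Corollary~1.15]{Mordukhovich}) makes clear what the intended mechanism is --- and your approach has a genuine gap relative to that mechanism.

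The hypothesis is only that $\mathbb{X},\mathbb{Y}$ are Banach spaces; nothing in the statement assumes Asplund. Your left-hand inclusion argument, however, rests on the fuzzy sum rule \cref{lem2.1}, which is explicitly an Asplund-space result, and on weak$^*$ \emph{sequential} compactness of $\mathbf{B}_{\mathbb{Y}^*}$, which again requires Asplund. So as written your proof does not cover the stated generality. The fix is that the fuzzy calculus is unnecessary here: surjectivity of $\nabla\Psi(\bar x)$ gives, in arbitrary Banach spaces, the \emph{exact} identity
\[
\widehat{\mathbf{N}}(A,x)=\nabla\Psi(x)^*\big(\mathbf{N}(C,\Psi(x))\big)
\]
for all $x$ near $\bar x$ (this is the Lyusternik--Graves preimage rule, \cite[Corollary~1.15]{Mordukhovich}, used repeatedly in the proof of \cref{th3.4}). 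Once you have equality, the two inclusions with balls are pure linear algebra: $L$ is any local bound on $\|\nabla\Psi(x)\|$, and $\ell$ comes from the uniform lower bound $\|\nabla\Psi(x)^*y^*\|\ge c\|y^*\|$ furnished by the open mapping theorem and continuity of $\nabla\Psi$ (surjectivity is an open condition). No subdifferential calculus, no fuzzy points $x_\varepsilon$, and no limiting step are needed. Your right-hand inclusion argument is essentially fine, but the left-hand one should be replaced by this direct route.
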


\setcounter{equation}{0}

\section{Metric Subregularity and $\mathbf{BCQ}$ of Multifunctions in Asplund Spaces}

In this section, we investigate metric subregularity and the $\mathbf{BCQ}$ property in terms of limiting normal cones and coderivatives for closed multifunctions in Asplund spaces. We also establish an implication between these two properties, which in turn leads to characterizations of Asplund spaces.

We start by recalling the definition of metric subregularity for a multifunction. Recall that $F$ is said to be metrically subregular at $(\bar x,\bar y)\in {\rm gph}(F)$,  if there exist $\tau,r\in (0, +\infty)$ such that
\begin{equation}\label{3.1}
 {\bf d}(x, F^{-1}(\bar y))\leq\tau {\bf d}(\bar y, F(x)),\ \ \forall x\in \mathbf{B}(\bar x, r).
\end{equation}

Given two Banach spaces $\mathbb{X}$ and $\mathbb{Y}$, let $\Gamma(\mathbb{X},\mathbb{Y})$ denote the set of all multifunctions $F:\mathbb{X}\rightrightarrows\mathbb{Y}$ with closed graph; that is,  {${\rm gph}(F)$ is closed in $\mathbb{X}\times\mathbb{Y}$}.

%
\medskip

{When $F$ is convex, i.e., ${\rm gph}(F)$ is a convex subset of $\mathbb{X}\times \mathbb{Y}$}, Zheng and Ng \cite{ZN2007} introduced the following concept.

\begin{definition}  [Zheng and Ng \cite{ZN2007} $\mathbf{BCQ}$ for convex multifunctions]
$F$ is said to satisfy the \emph{$\mathbf{BCQ}$} at $(\bar{x},\bar{y})$ if
\begin{equation}\label{3.2a}
    \mathbf{N}\big(F^{-1}(\bar{y}),\bar{x}\big) = D^*F(\bar{x},\bar{y})(\mathbb{Y}^*).
\end{equation}

In the convex case, {noting that ${\rm gph}(F)$ is a convex subset}, the inclusion
\[
D^*F(\bar{x},\bar{y})(\mathbb{Y}^*) \subseteq \mathbf{N}\big(F^{-1}(\bar{y}),\bar{x}\big)
\]
always holds, and so \eqref{3.2a} is equivalent to
\[
\mathbf{N}\big(F^{-1}(\bar{y}),\bar{x}\big) \subseteq D^*F(\bar{x},\bar{y})(\mathbb{Y}^*).
\]
\end{definition}

Motivated by this observation, we extend the $\mathbf{BCQ}$ to the general (nonconvex) setting using limiting normal cones and coderivatives.

\begin{definition}[Limiting $\mathbf{BCQ}$] 
Let $F\in\Gamma(\mathbb{X},\mathbb{Y})$ and $(\bar{x},\bar{y})\in{\rm gph}(F)$.  
We say that $F$ satisfies the \emph{limiting $\mathbf{BCQ}$} at $(\bar{x},\bar{y})$ if
\begin{equation}\label{3.2}
    \mathbf{N}\big(F^{-1}(\bar{y}),\bar{x}\big) \subseteq D^*F(\bar{x},\bar{y})(\mathbb{Y}^*).
\end{equation}
\end{definition}

We will show that for closed multifunctions between Asplund spaces, metric subregularity implies the limiting $\mathbf{BCQ}$, and that this implication can be used to characterize Asplund spaces.  
Before proceeding, we present the following lemma, which is of independent interest.

\begin{lemma}\label{lem3.1}
Let $\mathbb{X}$ be a Banach space, $\mathbb{Y} := \mathbb{X}^m$ be equipped with the $\ell^1$-norm, and let $A_1,\dots,A_m$ be closed subsets of $\mathbb{X}$ with nonempty intersection.  
Define $F:\mathbb{X}\rightrightarrows\mathbb{Y}$ by
\[
F(x) := (x-A_1) \times \cdots \times (x-A_m), \quad \forall\, x\in\mathbb{X}.
\]
Let $(\bar{x},\bar{y})\in{\rm gph}(F)$ with $\bar{y} = (\bar{y}(1),\dots,\bar{y}(m))$. Then:
\begin{itemize}
    \item[\rm(i)] We have
    \[
    {\rm dom}(\widehat{D}^*F(\bar{x},\bar{y})) \subseteq \widehat{\mathbf{N}}(A_1,\bar{x}-\bar{y}(1)) \times \cdots \times \widehat{\mathbf{N}}(A_m,\bar{x}-\bar{y}(m)),
    \]
    and for all $y^* = (y^*(1),\dots,y^*(m))$ in ${\rm dom}(\widehat{D}^*F(\bar{x},\bar{y}))$,
    \[
    \widehat{D}^*F(\bar{x},\bar{y})(y^*) = \left\{ x^* \in \mathbb{X}^* : x^* = \sum_{i=1}^m y^*(i) \right\}.
    \]
    \item[\rm(ii)] We have
\begin{equation}\label{3.4}
   { {\rm dom}(D^*F(\bar{x},\bar{y})) \subseteq \mathbf{N}(A_1,\bar{x}-\bar{y}(1)) \times \cdots \times \mathbf{N}(A_m,\bar{x}-\bar{y}(m))},
\end{equation}
    and for all such  {$y^* = (y^*(1),\dots,y^*(m))$ in ${\rm dom}({D}^*F(\bar{x},\bar{y}))$},
\begin{equation}\label{3.3}
  {   D^*F(\bar{x},\bar{y})(y^*) \subseteq \left\{ x^* \in \mathbb{X}^* : x^* = \sum_{i=1}^m y^*(i) \right\}}.
\end{equation}
    Moreover, this inclusion becomes an equality when $\mathbb{X}$ is Asplund.
\end{itemize}
\end{lemma}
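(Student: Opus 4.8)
\begin{Proof}
We only indicate the main steps. The backbone of both parts is the description
\[
{\rm gph}(F)=\bigl\{(x,y)\in\mathbb{X}\times\mathbb{Y}\ :\ x-y(i)\in A_i,\ i=1,\dots,m\bigr\},
\]
together with the following bookkeeping: a point $(x,y)\in{\rm gph}(F)$ is described by $x$ and the points $a_i:=x-y(i)\in A_i$, and, writing $\bar a_i:=\bar x-\bar y(i)\in A_i$, one has $y(i)-\bar y(i)=(x-\bar x)-(a_i-\bar a_i)$, so that for every $(x^*,y^*)\in\mathbb{X}^*\times\mathbb{Y}^*$
\[
\langle x^*,x-\bar x\rangle-\langle y^*,y-\bar y\rangle=\Bigl\langle x^*-\!\sum_i y^*(i),\,x-\bar x\Bigr\rangle+\sum_i\langle y^*(i),a_i-\bar a_i\rangle .
\]
(Equivalently, ${\rm gph}(F)=L(\mathbb{X}\times A_1\times\cdots\times A_m)$ for the linear homeomorphism $L(x,z):=(x,\,x-z(1),\dots,x-z(m))$; we follow the elementary computation above since it only uses the tools recalled in Section~2.) Part (i) follows by feeding two families of test directions into the definition of $\widehat{\mathbf N}({\rm gph}(F),\cdot)$, while part (ii) follows from the $\varepsilon$-enlarged version of the same computation together with the Mordukhovich--Shao representation of $\mathbf N$ in Asplund spaces \cite{MS}.

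\emph{Part (i).} Let $(x^*,-y^*)\in\widehat{\mathbf N}({\rm gph}(F),(\bar x,\bar y))$. Keeping $x=\bar x$ and $a_i=\bar a_i$ for $i\ne j$ while letting $a_j$ vary in $A_j$, the difference quotient defining $\widehat{\mathbf N}$ reduces (the first bracket vanishing, the denominator being exactly $\|a_j-\bar a_j\|$) to $\langle y^*(j),a_j-\bar a_j\rangle/\|a_j-\bar a_j\|$, forcing $y^*(j)\in\widehat{\mathbf N}(A_j,\bar x-\bar y(j))$, whence ${\rm dom}(\widehat{D}^*F(\bar x,\bar y))\subseteq\prod_i\widehat{\mathbf N}(A_i,\bar x-\bar y(i))$. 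Keeping instead all $a_i=\bar a_i$ and letting $x$ vary freely, the quotient reduces to $\langle x^*-\sum_i y^*(i),x-\bar x\rangle/\bigl((m+1)\|x-\bar x\|\bigr)$, forcing $x^*=\sum_i y^*(i)$. Conversely, given $y^*(i)\in\widehat{\mathbf N}(A_i,\bar a_i)$ for all $i$ and $x^*:=\sum_i y^*(i)$, the first bracket in the identity vanishes identically and for any $\varepsilon>0$ and $(x,y)\in{\rm gph}(F)$ close to $(\bar x,\bar y)$ one estimates $\sum_i\langle y^*(i),a_i-\bar a_i\rangle\le\varepsilon\sum_i\|a_i-\bar a_i\|\le\varepsilon m\bigl(\|x-\bar x\|+\|y-\bar y\|_1\bigr)$, using $\|a_i-\bar a_i\|\le\|x-\bar x\|+\|y(i)-\bar y(i)\|$; letting $\varepsilon\downarrow0$ gives $(x^*,-y^*)\in\widehat{\mathbf N}({\rm gph}(F),(\bar x,\bar y))$, and hence the claimed equality for $\widehat{D}^*F$.

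\emph{Part (ii).} For \eqref{3.4} and \eqref{3.3}, let $x^*\in D^*F(\bar x,\bar y)(y^*)$ and, using the $\varepsilon$-normal description of $\mathbf N$, choose $\varepsilon_k\downarrow0$, $(x_k,y_k)\to(\bar x,\bar y)$ with $(x_k,y_k)\in{\rm gph}(F)$, and $(x_k^*,-y_k^*)\xrightarrow{w^*}(x^*,-y^*)$ with $(x_k^*,-y_k^*)\in\widehat{\mathbf N}_{\varepsilon_k}({\rm gph}(F),(x_k,y_k))$. Running the two test-direction computations at $(x_k,y_k)$, with $a_{i,k}:=x_k-y_k(i)\in A_i$, yields $y_k^*(j)\in\widehat{\mathbf N}_{\varepsilon_k}(A_j,a_{j,k})$ for each $j$ and $\|x_k^*-\sum_i y_k^*(i)\|\le(m+1)\varepsilon_k$. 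Letting $k\to\infty$: since $a_{j,k}\to\bar x-\bar y(j)$, $y_k^*(j)\xrightarrow{w^*}y^*(j)$ and $\varepsilon_k\downarrow0$, the definition of $\mathbf N$ gives $y^*(j)\in\mathbf N(A_j,\bar x-\bar y(j))$, which is \eqref{3.4}; and $\sum_i y_k^*(i)=x_k^*-(x_k^*-\sum_i y_k^*(i))\xrightarrow{w^*}x^*$, while $\sum_i y_k^*(i)\xrightarrow{w^*}\sum_i y^*(i)$, so $x^*=\sum_i y^*(i)$, which is \eqref{3.3}. Finally, assume $\mathbb{X}$ is Asplund (hence so are $\mathbb{Y}=\mathbb{X}^m$ and $\mathbb{X}\times\mathbb{Y}$), and take $y^*(i)\in\mathbf N(A_i,\bar a_i)$ for all $i$ with $x^*:=\sum_i y^*(i)$. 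By the Mordukhovich--Shao representation choose, for each $i$, $a_{i,k}\in A_i$ with $a_{i,k}\to\bar a_i$, $a_{i,k}^*\xrightarrow{w^*}y^*(i)$ and $a_{i,k}^*\in\widehat{\mathbf N}(A_i,a_{i,k})$. Synchronize these $m$ independent sequences into one on ${\rm gph}(F)$ by freezing the $\mathbb{X}$-coordinate: set $x_k:=\bar x$, $y_k(i):=\bar x-a_{i,k}$, so that $(x_k,y_k)\in{\rm gph}(F)$, $(x_k,y_k)\to(\bar x,\bar y)$ and $x_k-y_k(i)=a_{i,k}$. Applying part (i) at $(x_k,y_k)$ gives $\xi_k:=\bigl(\sum_i a_{i,k}^*,\,-a_{1,k}^*,\dots,-a_{m,k}^*\bigr)\in\widehat{\mathbf N}({\rm gph}(F),(x_k,y_k))$, and $\xi_k\xrightarrow{w^*}(x^*,-y^*)$; since ${\rm gph}(F)$ is a closed subset of the Asplund space $\mathbb{X}\times\mathbb{Y}$, the Mordukhovich--Shao representation yields $(x^*,-y^*)\in\mathbf N({\rm gph}(F),(\bar x,\bar y))$, i.e.\ $x^*\in D^*F(\bar x,\bar y)(y^*)$. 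Together with \eqref{3.4} and \eqref{3.3} this gives the equality.

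\emph{Main obstacle.} The difficulty is not any single estimate but the bookkeeping in part (ii): one must verify that each test perturbation genuinely stays in ${\rm gph}(F)$, carry the $\varepsilon_k$ correctly through both computations so that it disappears in the limit, and --- above all --- find the right way to merge the $m$ separate approximating sequences $\{a_{i,k}\}_i$ into a single sequence in ${\rm gph}(F)$. The key observation is that the $\mathbb{X}$-coordinate may be frozen at $\bar x$ while all of the perturbation is absorbed into the $\mathbb{Y}$-coordinates, after which part (i) delivers the required Fr\'echet normals at the approximating points.
\end{Proof}
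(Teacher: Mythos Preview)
Your proof is correct and follows essentially the same approach as the paper: the same two test-direction arguments (vary one $a_j$ while freezing the rest; vary $x$ while freezing all $a_i$) establish the domain inclusion and the formula $x^*=\sum_i y^*(i)$, and for the Asplund reverse inclusion you synchronize the $m$ approximating sequences by freezing the $\mathbb{X}$-coordinate at $\bar x$, exactly as the paper does. The paper only writes out part~(ii) and leaves~(i) implicit, whereas you spell out~(i) and then invoke it in~(ii); this, together with the algebraic identity you isolate up front, makes your presentation somewhat cleaner but not substantively different.
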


\begin{proof} It suffices to prove (ii). Let $y^*=(y^*(1),\cdots,y^*(m))\in {\rm dom}(D^*F(\bar x, \bar y))$. Take any $x^*\in D^*F(\bar x, \bar y)(y^*)$. Then 
$$
(x^*,-y^*)\in \mathbf{N}({\rm gph}(F),(\bar x,\bar y))
$$ 
and there exist $\epsilon_k\downarrow 0$, $(x_k,y_k)\xrightarrow{{\rm gph}(F)}(\bar x, \bar y)$ and $(x_k^*,y_k^*)\stackrel{w^*}\longrightarrow (x^*,y^*)$ such that
$$
(x_k^*,-y_k^*)\in\widehat{\mathbf{N}}_{\epsilon_k}({\rm gph}(F),(x_k,y_k)),\ \ \forall k.
$$
Thus, for any $k\in \mathbb{N}$, one has
\begin{equation}\label{3.5}
  \limsup_{(x,y)\xrightarrow{{\rm gph}(F)} (x_k,y_k)}\frac{\langle (x_k^*,-y_k^*), (x-x_k,y-y_k)\rangle}{\|x-x_k\|+\|y-y_k\|}\leq\epsilon_k.
\end{equation}
For \eqref{3.4}, %
it suffices to prove that $y^*(j)\in \mathbf{N}(A_j, \bar x-\bar y(j))$ for all $j$.

Let $j\in\{1,\cdots,m\}$ be fixed. Note that $(x_k,y_k)\in{\rm gph}(F)$ and thus $x_k-y_k(j)\in A_j$. Then for any $h \rightarrow x_k-y_k(j)$ with $h\in A_j$, take $$
x:=x_k, y:=(y_k(1),\cdots,x_k-h,\cdots,y_k(m)).
$$
Then $(x,y)\xrightarrow{{\rm gph}(F)} (x_k,y_k)$ as $h \xrightarrow{A_j} x_k-y_k(j)$ and so \eqref{3.5} gives that
$$
\limsup_{h\stackrel{A_j}\longrightarrow x_k-y_k(j)}\frac{\langle y_k^*(j), h-(x_k-y_k(j))\rangle}{\|h-(x_k-y_k(j))\|}\leq\epsilon_k.
$$
This implies that $y_k^*(j)\in\widehat {\bf N}_{\epsilon_k}(A_j, x_k-y_k(j))$ and consequently $y^*(j)\in \mathbf{N}(A_j, \bar x-\bar y(j))$ (thanks to $x_k-y_k(j)
\stackrel{A_j}\longrightarrow \bar x-\bar y(j)$ and $(y_k^*(j))\stackrel{w^*}\longrightarrow y^*(j)$).

We next prove \eqref{3.3}. Let $b\in {\bf B}_\mathbb{X}$. For any $t>0$ sufficiently small, take
$$
x:=x_k+tb, y:=(y_k(1)+tb,\cdots,y_k(m)+tb).
$$
Then one can verify that $(x,y)\in {\rm gph}(F)$ and $(x,y)\rightarrow (x_k,y_k)$ as $t\rightarrow 0^+$. This and \eqref{3.5} imply that 
$$
\langle x_k^*-\sum_{i=1}^my_k^*(i), b\rangle\leq (m+1)\epsilon_k.
$$
Taking limits as $k\rightarrow\infty$, one has
$$
\langle x^*-\sum_{i=1}^my^*(i), b\rangle\leq 0. %
$$
This means that $x^*=\sum_{i=1}^my^*(i)$ due to the arbitrariness of $b\in {\bf B}_\mathbb{X}$ and so \eqref{3.3} holds. 

Suppose that $\mathbb{X}$  is Asplund. It remains to prove ``$\supseteq$" in \eqref{3.3}.

Let $x^*:=\sum_{i=1}^my^*(i)$. Let $i\in \{1,\cdots, m\}$ be fixed. Then $y^*(i)\in \mathbf{N}(A_i, \bar x-\bar y(i))$ and thus there exist $u_k(i)\stackrel{A_i}\longrightarrow\bar x-\bar y(i)$ and $y^*_k(i)\stackrel{w^*}\longrightarrow y^*(i)$ such that $y^*_k(i)\in\widehat {\bf N}(A_i, \bar x-\bar y(i))$ for all $k$. For any $k$, let $v_k:=(\bar x-u_k(1), \cdots, \bar x-u_k(m))$. Then one can verify that $v_k\rightarrow \bar y$ and $v_k\in F(\bar x)$ due to $u_k(i)\in A_i$. Let $y_k^*:=(y^*_k(1),\cdots,y^*_k(m))$ and $x_k^*:=\sum_{i=1}^my^*(i)$. Then $(x_k^*,y_k^*)\stackrel{w^*}\rightarrow (x^*,y^*)$. 
Note that for any $(x,y)\xrightarrow{{\rm gph}(F)} (\bar x,v_k)$, one has
\begin{eqnarray*}
	\langle x_k^*, x-\bar x\rangle +\langle-y_k^*, y-v_k\rangle &=& 	\Big\langle \sum_{i=1}^my_k^*(i), x-\bar x\Big\rangle + \sum_{i=1}^m\langle-y_k^*(i), y(i)-(\bar x-u_k(i))\rangle\\
	&=& \sum_{i=1}^m\langle y_k^*(i), x-\bar x-y(i)+(\bar x-u_k(i))\rangle\\
	&=& \sum_{i=1}^m\langle y_k^*(i), x-y(i)-u_k(i)\rangle
\end{eqnarray*}
and it follows from $y_k^*(i)\in\widehat{\mathbf{N}}(A_i, \bar x-\bar y(i))$ that
\begin{equation*}
	\limsup_{(x,y)\xrightarrow{{\rm gph}(F)} (\bar x, v_k)}\frac{\langle (x^*,-y^*), (x-\bar x, y-v_k)\rangle}{\|x-\bar x\|+\|y-v_k\|}\leq 0.
\end{equation*}
This means that $(x_k^*, -y_k^*)\in\widehat{\mathbf{N}}({\rm gph}(F), (\bar x, v_k))$ and thus $(x^*, -y^*)\in \mathbf{N}({\rm gph}(F), (\bar x, \bar y))$ thanks to $v_k\in F(\bar x)$ and $(x_k^*,y_k^*)\stackrel{w^*}\rightarrow (x^*,y^*)$.  
 The proof is complete.
 \end{proof}

\medskip

The following theorem establishes characterizations of Asplund spaces in terms of metric subregularity and limiting $\mathbf{BCQ} $  of multifunctions.


\begin{theorem}\label{th3.1}
Let $\mathbb{X}$ be a Banach space. Then the following statements are equivalent:
\begin{itemize}
    \item[\rm (i)] $\mathbb{X}$ is an Asplund space.
    \item[\rm (ii)] For every Asplund space $\mathbb{Y}$ and every $F \in \Gamma(\mathbb{X}, \mathbb{Y})$  {being} metrically subregular at $(\bar x, \bar y) \in {\rm gph}(F)$, there exists $\delta \in (0, +\infty)$ such that $F$ satisfies the limiting $\mathbf{BCQ}$ at all $(x, \bar y)$ with $x \in \mathbf{B}(\bar x, \delta) \cap F^{-1}(\bar y)$.
    \item[\rm (iii)] For every Asplund space $\mathbb{Y}$ and every $F \in \Gamma(\mathbb{X}, \mathbb{Y})$  {being} metrically subregular at $(\bar x, \bar y) \in {\rm gph}(F)$, $F$ satisfies the limiting $\mathbf{BCQ}$ at $(\bar x, \bar y)$.
\end{itemize}
\end{theorem}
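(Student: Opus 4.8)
The plan is to prove the three implications $(i)\Rightarrow(ii)\Rightarrow(iii)\Rightarrow(i)$. The implication $(ii)\Rightarrow(iii)$ is immediate: taking $x=\bar x$ (which lies in $\mathbf{B}(\bar x,\delta)\cap F^{-1}(\bar y)$ since $(\bar x,\bar y)\in{\rm gph}(F)$) in $(ii)$ gives exactly the limiting $\mathbf{BCQ}$ at $(\bar x,\bar y)$.

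For the main implication $(i)\Rightarrow(ii)$, assume $\mathbb{X}$ is Asplund and let $F\in\Gamma(\mathbb{X},\mathbb{Y})$ be metrically subregular at $(\bar x,\bar y)$, so there are $\tau,r\in(0,+\infty)$ with $\mathbf{d}(x,F^{-1}(\bar y))\le\tau\,\mathbf{d}(\bar y,F(x))$ for all $x\in\mathbf{B}(\bar x,r)$. Fix $\delta:=r/2$ (or a slightly smaller radius to be chosen along the way), let $x_0\in\mathbf{B}(\bar x,\delta)\cap F^{-1}(\bar y)$, and let $x^*\in\mathbf{N}(F^{-1}(\bar y),x_0)$. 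Since $\mathbb{X}$ is Asplund, by the Mordukhovich--Shao representation there exist $x_n\xrightarrow{F^{-1}(\bar y)}x_0$ and $x_n^*\xrightarrow{w^*}x^*$ with $x_n^*\in\widehat{\mathbf{N}}(F^{-1}(\bar y),x_n)$. The strategy is to show that each $x_n^*$ (or a nearby functional) lies in $D^*F(x_n,\bar y)(\mathbb{Y}^*)$ with a uniform bound on the $\mathbb{Y}^*$-multiplier, and then pass to the limit using the weak$^*$ closedness built into the definition of the limiting coderivative. Concretely, for $x$ near $x_n$ one has $\mathbf{d}(x,F^{-1}(\bar y))\le\tau\,\mathbf{d}(\bar y,F(x))$; the distance function $\varphi(x):=\mathbf{d}(\bar y,F(x))$ is lower semicontinuous (since ${\rm gph}(F)$ is closed) and vanishes precisely on $F^{-1}(\bar y)$, so $\widehat{\mathbf{N}}(F^{-1}(\bar y),x_n)\cap\ell\mathbf{B}_{\mathbb{X}^*}\subseteq\widehat\partial(\tau\varphi)(x_n)$ for appropriate scaling (this is the standard passage from a Fréchet normal of a sublevel/zero set to a Fréchet subgradient of the bounding function, using the metric subregularity inequality). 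One then applies the fuzzy sum rule (\cref{lem2.1}) and \cref{lem2.2} to the function $x\mapsto\mathbf{d}\big((x,\bar y),{\rm gph}(F)\big)$, whose Fréchet subgradients at a point connect to $\widehat{\mathbf{N}}({\rm gph}(F),\cdot)$, i.e.\ to the Fréchet coderivative $\widehat D^*F$; this yields, for each $n$ and each $\varepsilon>0$, a point $(\tilde x_n,\tilde y_n)$ near $(x_n,\bar y)$ on ${\rm gph}(F)$ and a functional $\tilde x_n^*\in\widehat D^*F(\tilde x_n,\tilde y_n)(B)$ for some bounded set $B\subseteq\mathbb{Y}^*$, with $\|\tilde x_n^*-x_n^*\|<\varepsilon$. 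A diagonal argument over $n$ and $\varepsilon\downarrow 0$, together with the definition \eqref{2.1} of the limiting normal cone, then places $x^*$ in $D^*F(x_0,\bar y)(\mathbb{Y}^*)$, which is the limiting $\mathbf{BCQ}$ at $(x_0,\bar y)$.

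For $(iii)\Rightarrow(i)$ I would argue by contraposition: if $\mathbb{X}$ is not Asplund, then (by a classical result, e.g.\ Fabian--Preiss type constructions, as used elsewhere in characterizations of Asplund spaces) there is a nonempty closed subset $A\subseteq\mathbb{X}$ and a point $\bar a\in A$ at which $\widehat{\mathbf{N}}(A,x)$ collapses (is trivial) for all $x$ near $\bar a$ while $\mathbf{N}(A,\bar a)$ is nontrivial, or more precisely a finite family $A_1,\dots,A_m$ witnessing failure of the intersection/normal-cone calculus. I would then feed such sets into the multifunction $F(x)=(x-A_1)\times\cdots\times(x-A_m)$ of \cref{lem3.1}: here $F^{-1}(\bar y)=\bigcap_i(\bar x-\bar y(i)+A_i)$ and one can arrange $F$ to be metrically subregular at a suitable $(\bar x,\bar y)$ (e.g.\ take all $A_i$ equal, or exploit a recession-cone translation making the distance estimate trivial), yet by \cref{lem3.1}(ii) the coderivative $D^*F(\bar x,\bar y)(\mathbb{Y}^*)$ is controlled by $\mathbf{N}(A_1,\cdot)\times\cdots\times\mathbf{N}(A_m,\cdot)$ in a way that fails to cover $\mathbf{N}(F^{-1}(\bar y),\bar x)$ — the latter being strictly larger precisely because the Asplund-dependent equality in \cref{lem3.1}(ii) breaks down. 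This contradicts the limiting $\mathbf{BCQ}$ guaranteed by $(iii)$, completing the contrapositive.

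The main obstacle I anticipate is the uniformity in the $(i)\Rightarrow(ii)$ step: the fuzzy sum rule produces, for each fixed $\varepsilon$, a perturbed base point and a perturbed functional, and one must control simultaneously (a) that the perturbed base point stays in $\mathbf{B}(\bar x,\delta)\cap{\rm gph}(F)$, (b) that the $\mathbb{Y}^*$-multiplier stays in a bounded set independent of $n$ and $\varepsilon$ (this is where the constant $\tau$ from metric subregularity and \cref{lem3.2}-type scaling must be tracked carefully), and (c) that the diagonal limit genuinely lands back at the original base point $x_0$ rather than drifting. Getting the radius $\delta$ shrunk appropriately so that all perturbations remain inside $\mathbf{B}(\bar x,r)$, and organizing the double limit ($n\to\infty$, $\varepsilon\downarrow0$) cleanly, is the delicate bookkeeping; the construction for $(iii)\Rightarrow(i)$ is conceptually the sharpest point, as it must exhibit a concrete metrically subregular multifunction violating the $\mathbf{BCQ}$ on every non-Asplund $\mathbb{X}$.
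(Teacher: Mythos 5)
Your plan for (i)$\Rightarrow$(ii) is essentially the paper's argument: pass from $\mathbf{N}(F^{-1}(\bar y),x)$ to Fr\'echet subgradients of a distance-type majorant, apply the fuzzy sum rule (\cref{lem2.1}) and \cref{lem2.2} to reach Fr\'echet normals of ${\rm gph}(F)$ at nearby points, and close up with a weak$^*$ limit; the bookkeeping you worry about is resolved exactly as in the paper by first upgrading metric subregularity to the Lipschitz estimate \eqref{3.6} involving the graph distance $\mathbf{d}_{\|\cdot\|_\tau}((x,y),{\rm gph}(F))+\|y-\bar y\|$ (note, incidentally, that your side-claim that $x\mapsto \mathbf{d}(\bar y,F(x))$ is lower semicontinuous because ${\rm gph}(F)$ is closed is false in infinite dimensions, which is precisely why the paper works with the graph distance rather than with $\mathbf{d}(\bar y,F(\cdot))$).

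The genuine gap is in (iii)$\Rightarrow$(i). You assert that non-Asplundness of $\mathbb{X}$ yields, ``by classical results,'' closed sets $A_1,\dots,A_m$ for which the multifunction $F(x)=(x-A_1)\times\cdots\times(x-A_m)$ is metrically subregular while the limiting $\mathbf{BCQ}$ fails, but you never produce such sets, and your concrete suggestions do not work. In particular, taking all $A_i$ equal to one set $A$ cannot give a counterexample: for any $x^*\in\mathbf{N}(A,\bar x)$, the $\varepsilon$-normals witnessing $x^*$ transfer directly to ${\rm gph}(F)$ (pair $x_k^*$ with $y^*_k:=(x_k^*,0,\dots,0)$), so $\mathbf{N}(A,\bar x)=\mathbf{N}(F^{-1}(0),\bar x)\subseteq D^*F(\bar x,0)(\mathbb{Y}^*)$ and the limiting $\mathbf{BCQ}$ holds automatically in every Banach space. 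The whole difficulty is to exhibit, on an arbitrary non-Asplund $\mathbb{X}$, sets that are simultaneously subtransversal (so that $F$ is metrically subregular) and violate the strong limiting CHIP; this is the content the paper supplies via the Deville--Godefroy--Zizler renorming theorem: writing $\mathbb{X}=\mathbb{Z}\times\mathbb{R}$ with $\mathbb{Z}$ non-Asplund, one takes a uniformly rough equivalent norm $|||\cdot|||$ on $\mathbb{Z}$ satisfying \eqref{3.10}, sets $A_1:=\{0_\mathbb{Z}\}\times(-\infty,0]$ and $A_2:={\rm epi}(-|||\cdot|||)$, verifies metric subregularity with constant $2$ as in \eqref{3.11}, and then shows $\mathbf{N}(A_2,\bar x)=\{(0,0)\}$ as in \eqref{3.15-250708}, so that by \cref{lem3.1} one gets $D^*F(\bar x,\bar y)(\mathbb{Y}^*)\subseteq \mathbb{Z}^*\times[0,+\infty)$, which cannot contain $\mathbf{N}(F^{-1}(\bar y),\bar x)=\mathbb{Z}^*\times\mathbb{R}$ (the intersection $A_1\cap A_2$ being the singleton $\{\bar x\}$). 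Also note that your proposed dichotomy (``$\widehat{\mathbf{N}}(A,x)$ trivial near $\bar a$ but $\mathbf{N}(A,\bar a)$ nontrivial'') is not what is needed here: the counterexample works in the opposite way, with the limiting normal cone of $A_2$ being trivial and the normal cone of the (singleton) intersection being all of $\mathbb{X}^*$. Without an explicit construction of this kind, the contrapositive step is unsubstantiated, so the proof of (iii)$\Rightarrow$(i) as proposed is incomplete.
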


\begin{proof}  (i)$\,\Rightarrow\,$(ii): Suppose that $\mathbb{X}$ is an Asplund space. Let $\mathbb{Y}$ be an Asplund space and $F\in\Gamma(\mathbb{X}, \mathbb{Y})$ be metrically subregular at $(\bar x,\bar y)\in{\rm gph}(F)$. Then there exist $\tau,r  \in  (0,+\infty)$ such that \eqref{3.1} holds.

For any $(x, y)\in \mathbb{X}\times
\mathbb{Y}$, let $\|(x, y)\|_{\tau}:=\frac{\tau+1}{\tau}\|x\|+\|y\|$. Then  $(\mathbb{X}\times
\mathbb{Y}, \|\cdot\|_{\tau})$ is an Asplund space and the unit ball of its
dual space is $(\frac{\tau+1}{\tau}\mathbf{B}_{\mathbb{X}^*})\times \mathbf{B}_{\mathbb{Y}^*}$. Let $\delta:=\frac{r}{2}$. We claim that
\begin{equation}\label{3.6}
\mathbf{d}(x, F^{-1}(\bar y))\leq \tau(\mathbf{d}_{\|\cdot\|_{\tau}}((x, y), {\rm gph}(F))+\|y-\bar y\|),\ \ \forall (x, y)\in  \mathbf{B}(\bar x, \delta)\times \mathbb{Y},
\end{equation}
where $\mathbf{d}_{\|\cdot\|_{\tau}}((x, y), {\rm gph}(F)):=\inf\{\|(x,y)-(u,v)\|_{\tau}: (u,v)\in{\rm gph}(F)\}$.

Suppose   {on} the contrary that there exists $(x_0, y_0)\in  \mathbf{B}(\bar x, \delta)\times \mathbb{Y}$ such
that
$$
\mathbf{d}(x_0,  F^{-1}(\bar y)) > \tau(\mathbf{d}_{\|\cdot\|_{\tau}}((x_0, y_0),
{\rm gph}(F))+\|y_0-\bar y\|).
$$
This implies that there exists $u\in \mathbb{X}$ such that
$$
\mathbf{d}(x_0, F^{-1}(\bar y))> \tau(\frac{\tau+1}{\tau}\|u-x_0\|+\mathbf{d}(y_0, F(u))+\|y_0-\bar y\|.
$$
Thus,
$$
\mathbf{d}(x_0, F^{-1}(\bar y))>\|u-x_0\|+\tau\mathbf{d}(\bar y, F(u)).
$$
Since
$$
\|u-\bar x\|\leq\|u-x_0\|+\|x_0-\bar x\|<\mathbf{d}(x_0, F^{-1}(\bar y))+\|x_0-\bar x\|\leq
2\|x_0-\bar x\|<r,
$$
then \eqref{3.1} gives that
$$
\mathbf{d}(x_0, F^{-1}(\bar y))>\|u-x_0\|+\mathbf{d}(u, F^{-1}(\bar y))\geq \mathbf{d}(x_0, F^{-1}(\bar y)),
$$
which is a contradiction. Hence \eqref{3.6} holds.

Let $x\in  \mathbf{B}(\bar x, \delta)\cap F^{-1}(\bar y)$ and $x^*\in \mathbf{N}(F^{-1}(\bar y),
x)$. Then there exists a sequence $\{(x_n,x_n^*)\}$ in $F^{-1}(\bar y)\times \mathbb{X}^*$ such that 
$$
x_n\rightarrow x, \ x_n^*\stackrel{w^*}\rightarrow x^*\ \ {\rm and} \ \ x_n^*\in\widehat{\mathbf{N}}(F^{-1}(\bar y),x_n),\ \forall n.
$$
Applying  the   Banach-Steinhaus theorem, there is $M>0$ such that $\|x_n^*\|\leq M$ for all $n$. For any $n\in\mathbb{N}$, let $\widetilde{x_n^*}:=\frac{x_n^*}{M}$ and then  \cite[Corollary 1.96]{Mordukhovich} gives that
$$\widetilde{x_n^*}\in\widehat{\mathbf{N}}(F^{-1}(\bar y),x_n)\cap \mathbf{B}_{\mathbb{X}^*}=\widehat{\partial}\mathbf{d}(\cdot, F^{-1}(\bar y))(x_n).$$ 
We claim that
\begin{equation}\label{3.8-250618}
	(\frac{\widetilde{x_n^*}}{\tau},0)\in\widehat\partial(\mathbf{d}_{\|\cdot\|_{\tau}}(\cdot,
	{\rm gph}(F))+\phi)(x_n,\bar y)
\end{equation}
where $\phi(u,v):=\|v-\bar y\|$ for any $(u,v)\in\mathbb{X}\times\mathbb{Y}$.

Indeed, for any
$\varepsilon>0$ there exits $\delta_n\in (0, \delta-\|x-\bar x\|)$ such that
$$
\langle \widetilde{x_n^*}, u-x_n\rangle\leq \mathbf{d}(x, F^{-1}(\bar y))+\tau\varepsilon\|u-x_n\|,\ \ \forall u\in  \mathbf{B}(x_n, \delta_n).
$$
This and \eqref{3.6} imply that for any $(u,v)\in  \mathbf{B}(x_n, \delta_n)\times \mathbf{B}(\bar y, \delta_n)$, one has
\begin{eqnarray*}
\langle \widetilde{x_n^*}, u-x_n\rangle\leq\tau (\mathbf{d}_{\|\cdot\|_{\tau}}((u, v),
{\rm gph}(F))+\|v-\bar y\|)+\tau \varepsilon\|u-x_n\|
\end{eqnarray*}
and thus
$$
\langle \frac{\widetilde{x_n^*}}{\tau}, u-x_n\rangle\leq \mathbf{d}_{\|\cdot\|_{\tau}}((u, v),
{\rm gph}(F))+\phi(u,v)+\varepsilon\|u-x_n\|.
$$
This implies that \eqref{3.8-250618} holds.

Since $\mathbb{X}\times\mathbb{Y}$ is  an  Asplund space, by virtue of \cref{lem2.1}, there exist $(u_n,v_n), (w_n,z_n)\in  \mathbf{B}(x_n,\frac{1}{n})\times  \mathbf{B}(\bar y,\frac{1}{n})$ such that
\begin{eqnarray*}
(\frac{\widetilde{x_n^*}}{\tau},0)&\in& \widehat\partial\mathbf{d}_{\|\cdot\|_{\tau}}(\cdot,
{\rm gph}(F))(u_n,v_n)+\widehat\partial\phi(w_n,z_n)+\frac{1}{n}(\mathbf{B}_{\mathbb{X}^*}\times \mathbf{B}_{\mathbb{Y}^*})\\
&\subseteq& \widehat\partial\mathbf{d}_{\|\cdot\|_{\tau}}(\cdot,
{\rm gph}(F))(u_n,v_n)+\{0\}\times \mathbf{B}_{\mathbb{Y}^*}+\frac{1}{n}(\mathbf{B}_{\mathbb{X}^*}\times \mathbf{B}_{\mathbb{Y}^*}).
\end{eqnarray*}
Then there {exists}  $(u_n^*,v_n^*)\in\widehat\partial\mathbf{d}_{\|\cdot\|_{\tau}}(\cdot,
{\rm gph}(F))(u_n,v_n)$ and $b_n^*\in\mathbf{B}_{\mathbb{Y}^*}$ such that
\begin{equation}\label{3.7}
 (\frac{\widetilde{x_n^*}}{\tau},0) \in  {(u_n^*,v_n^*)}+(0,b_n^*)+\frac{1}{n}(\mathbf{B}_{\mathbb{X}^*}\times \mathbf{B}_{\mathbb{Y}^*}).
\end{equation}
For $(u_n^*,v_n^*)\in\widehat\partial\mathbf{d}_{\|\cdot\|_{\tau}}(\cdot,
{\rm gph}(F))(u_n,v_n)$, \cref{lem2.2} gives the existence of   $(\widetilde{u_n}, \widetilde{v_n})\in{\rm gph}(F)$ and $(\widetilde{u_n^*}, \widetilde{v_n^*})\in {\widehat{\mathbf{N}}({\rm gph}(F),(\widetilde{u_n}, \widetilde{v_n}))}$ such that
\begin{equation}\label{3.8}
\|(\widetilde{u_n}, \widetilde{v_n})-(u_n,v_n)\|<\mathbf{d}_{\|\cdot\|_{\tau}}(({u_n}, {v_n}),
{\rm gph}(F))+\frac{1}{n}\ \ {\rm and} \ \ \|(\widetilde{u_n^*}, \widetilde{v_n^*})-(u_n^*,v_n^*)\|<\frac{1}{n}.
\end{equation}
This and \eqref{3.7} imply that
\begin{equation}\label{3.9}
   (\frac{\widetilde{x_n^*}}{\tau},0) {\in(\widetilde{u_n^*},\widetilde{v_n^*})}+(0,b_n^*)+\frac{2}{n}(\mathbf{B}_{\mathbb{X}^*}\times \mathbf{B}_{\mathbb{Y}^*}).
\end{equation}
Note that 
$$
(u_n^*,v_n^*)\in\widehat\partial\mathbf{d}_{\|\cdot\|_{\tau}}(\cdot, {\rm gph}(F))(u_n,v_n)\subseteq \frac{\tau+1}{\tau}\mathbf{B}_{\mathbb{X}^*}\times\mathbf{B}_{\mathbb{Y}^*}
$$
and $\mathbf{B}_{\mathbb{X}^*},\mathbf{B}_{\mathbb{Y}^*}$ are weak$^*$-sequentially compact (as $\mathbb{X},\mathbb{Y}$ are Asplund spaces), and thus without loss of  {generality} we can assume that
$$
(u_n^*,v_n^*)\stackrel{w^*}\rightarrow (u^*,v^*) \ \ {\rm and}\ \  b_n^*\stackrel{w^*}\rightarrow b^*\in \mathbf{B}_{\mathbb{Y}^*}
$$
(taking subsequence if necessary). This and \eqref{3.9} imply that $(u^*,v^*)\in  {\mathbf{N}({\rm gph}(F),(x,\bar y))}$ (thanks to $(u_n,v_n)\rightarrow (x,\bar y)$). By taking the   limit (with respect to the weak$^*$-topology) as $n\rightarrow \infty$ in \eqref{3.9}, one has
$$
(\frac{x^*}{M\tau},0)=(u^*, v^*)+(0,b^*), 
$$
and consequently
$$
x^*\in D^*F(x,\bar y)(\tau M b^*)\subseteq D^*F(x,\bar y)(\mathbb{Y}^*).
$$
This means that $F$ satisfies the limiting $\mathbf{BCQ} $  at $ {(x,\bar y)}$.

Note that (ii)$\; \Rightarrow \; $(iii) follows immediately, and it suffices to prove that (iii)$\;\Rightarrow\;$(i).

Suppose  {on}  the contrary that $\mathbb{X}$ is not Asplund. Following \cite{MW2000,FM1998}, we can represent $\mathbb{X}$ in the form $\mathbb{X}=\mathbb{Z}\times \mathbb{R}$ with the norm $\|(z,\alpha)\|:=\|z\|+|\alpha|$ for any $x=(z,\alpha)\in \mathbb{X}$. Then $\mathbb{ Z}$ is not an Asplund space. By virtue of \cite[Theorem 1.5.3]{DGZ1993} (also \cite[Theorem 2.1]{FM1998}), there exists an equivalent norm $|||\cdot|||$ on $\mathbb{Y}$ and $\gamma>0$ such that 
\begin{equation}\label{3.10}
\frac{1}{2}\|z\|\leq|||z|||\leq \|z\|\ \ {\rm and} \ \  \limsup_{h\rightarrow 0}\frac{|||z+h|||+|||z-h|||-2|||z|||}{\|h\|}>\gamma, \ \forall z\in \mathbb{Z}.
\end{equation}
Let $\varphi:\mathbb{Z}\rightarrow \mathbb{R}$ be defined as $\varphi(z):=-|||z|||,\forall z\in \mathbb{Z}$ and 
$$
A_1:=\{0_\mathbb{Z}\}\times (-\infty, 0], A_2:=\epi(\varphi)\ \ {\rm and} \ \ \bar x:=0_\mathbb{X}.
$$
Let $\mathbb{Y}:=\mathbb{X}^2$ be equipped with the   $\ell^1$-norm. Then $\mathbb{Y}$ is not the Asplund space. Define $F:\mathbb{X}\rightarrow\mathbb{Y}$ as follows:
$$
F(x):=(x-A_1)\times(x-A_2),\ \ \forall x\in\mathbb{X}.
$$
Denote $\bar y:=(0_\mathbb{X},0_\mathbb{X})$. Then $(\bar x,\bar y)\in{\rm gph}(F)$ and $F^{-1}(\bar y)=A_1\cap A_2=\{\bar x\}$. We first show that 
\begin{equation}\label{3.11}
  \mathbf{d}(x, F^{-1}(\bar y))\leq 2(\mathbf{d}(\bar y, F(x)))
\end{equation}
holds for all $x=(z,\alpha)\in \mathbb{Z}\times \mathbb{R}$, which means that $F$ is metrically subregular at $(\bar x,\bar y)$.

Indeed, for any $x=(z,\alpha)\in {\mathbb{Z} \times \mathbb{R}}$, one has
\begin{equation}\label{3.12}
  \mathbf{d}(x, F^{-1}(\bar y))=\|z\|+|\alpha|\ \ {\rm and} \ \ \mathbf{d}(\bar y, F(x))=\mathbf{d}(x, A_1)+\mathbf{d}(x, A_2).
\end{equation}

If $\alpha\geq 0$, then \eqref{3.11} holds since $\mathbf{d}(x, F^{-1}(\bar y))=\mathbf{d}(\bar y, F(x)))$.

If $-|||z|||\leq \alpha <0$, then \eqref{3.10} gives that
$$
\mathbf{d}(x,  F^{-1}(\bar y))=\|z\|+|\alpha|\leq 2\|z\|= 2\mathbf{d}(\bar y, F(x))
$$
and thus \eqref{3.11} holds.

If $\alpha <-|||z|||$, then for any $(u,\lambda)\in A_2$, one has $-|||u|||\leq \lambda$ and 
\begin{eqnarray*}
-|||z|||-\alpha&\leq& -|||z|||-\alpha+|||u|||+\lambda\\
&\leq&\big||||z|||-|||u|||\big|+|\alpha-\lambda|\\
&\leq& |||z-u|||+|\alpha-\lambda|\\
&\leq& \|z-u\|+|\alpha-\lambda|,
\end{eqnarray*}
where the last inequality  follows from  \eqref{3.10}. This implies that
$$
-|||z|||-\alpha\leq \mathbf{d}(x, A_2)
$$
and it follows from \eqref{3.10} and \eqref{3.12} that
$$
\mathbf{d}(x, F^{-1}(\bar y))=\|z\|-\alpha=\|z\|+|||z|||-|||z|||-\alpha\leq 2\|z\|+\mathbf{d}(x, A_2)\leq 2\mathbf{d}(\bar y,F(x))
$$
(thanks to $\mathbf{d}(x, A_1)=\|z\|$). Hence \eqref{3.11} holds.

We next prove that 
\begin{equation}\label{3.15-250708}
	  {\bf N}(A_2,\bar x)=\{(0,0)\}.
\end{equation}
Let $x^*=(z^*,\lambda)\in  {\bf N}(A_2,\bar x)$. Then there exist $(z_k,\alpha_k)\stackrel{A_2}\longrightarrow (\bar z,\bar\alpha)$ and $(z_k^*,\lambda_k)\stackrel{w^*}\longrightarrow (z^*,\lambda)$ such that $(z_k^*,\lambda_k)\in  {\widehat{\bf N}(A_2,(z_k,\alpha_k))}$ for all $k$. Then $ {\lambda_k}\leq 0$. We claim that $ {\lambda_k}=0$. (Indeed, if not, then \cref{lem2.2} implies that $\lambda_k<0$ and $\frac{z_k^\ast }{-\lambda_k}\in\widehat\partial \varphi(z_k)$. Thus,
$$
\liminf_{\|h\|\rightarrow 0}\frac{-|||z_k+h|||+|||z_k|||-\langle\frac{z_k^\ast }{\lambda_k}, h\rangle}{\|h\|}\geq 0.
$$
Hence
$$
\limsup_{\|h\|\rightarrow 0}\frac{|||z_k+h|||+|||z_k-h|||-2|||z_k|||}{\|h\|}\leq 0,
$$
which is a contradiction with \eqref{3.10}). This implies that $(z_k,0)\in {\widehat{\bf N}(A_2,(z_k,\alpha_k))}$. Then for any $b\in{\bf B}_{\mathbb{X}}$, one has
$$
\limsup_{t\rightarrow 0^+}\frac{\langle z_k^*, tb\rangle}{t\|b\|+\big|-|||z_k+tb|||+|||z_k|||\big|}\leq 0
$$
and consequently $\langle z_k^*, b\rangle\leq 0$. This means that $z_k^*=0$ and \eqref{3.15-250708} holds.

Since $F$ satisfies the limiting $\mathbf{BCQ} $  at $(\bar x,\bar y)$, it follows from   \cref{lem3.1}  that
\begin{equation}\label{3.13-a}
  D^*F(\bar x,\bar y)(\mathbb{Y}^*)\subseteq \mathbf{N}(A_1,\bar x)+\mathbf{N}(A_2,\bar x).
\end{equation}
Note that %
\begin{equation}\label{3.17-250710}
\mathbf{N}(F^{-1}(\bar y),\bar x)= {\mathbb{Z}^*}\times \mathbb{R},\ \ \mathbf{N}(A_1,\bar x)= {\mathbb{Z}^*}\times [0,+\infty) \ \  {\rm and} \ \ \mathbf{N}(A_2,\bar x)=\{(0,0)\}. 
\end{equation}
This and \eqref{3.13-a} imply that $F$ does not satisfy the limiting $\mathbf{BCQ} $  at $(\bar x,\bar y)$ since 
\begin{equation}\label{3.18-250710}
\mathbf{N}(F^{-1}(\bar y),\bar x)\not\subseteq \mathbf{N}(A_1,\bar x) + \mathbf{N}(A_2,\bar x),
\end{equation}
which contradicts (iii). The proof is complete.
\end{proof}
\medskip

Further, we show that metric subregularity may lead to ``fuzzy'' inclusions expressed in terms of Fr\'echet coderivatives and normal cones. 
Moreover, such inclusions can be employed to obtain characterizations of Asplund spaces.

\begin{theorem}\label{theorem3.2}
	Let $\mathbb{X}$ be a Banach space. Then the following statements are equivalent:
	\begin{itemize}
		\item [\rm(i)] $\mathbb{X}$ is an Asplund space.
		\item [\rm(ii)] For every Asplund space $\mathbb{Y}$ and every $F\in\Gamma(\mathbb{X}, \mathbb{Y})$  {being} metrically subregular at $(\bar x,\bar y)\in{\rm gph}(F)$, there exists $\delta\in (0,+\infty)$ such that for all  $\varepsilon>0$,  {and $x\in \mathbf{B}(\bar x,\delta)\cap F^{-1}(\bar y) $ one has}
		 \begin{equation}\label{3-15a}
			\widehat{\mathbf{N}}(F^{-1}(\bar y), x)\subseteq \bigcup\left\{\widehat D^*F(u,v)(\mathbb{Y}^*):(u,v)\in \mathbf{B}((x,\bar y),\epsilon)\cap {\rm gph}(F)\right\}+\epsilon\mathbf{B}_{\mathbb{X}^*}.
		\end{equation}
		\item [\rm(iii)] For every Asplund space $\mathbb{Y}$ and every $F\in\Gamma(\mathbb{X}, \mathbb{Y})$  {being} metrically subregular at $(\bar x,\bar y)\in{\rm gph}(F)$, then 
		 for all $\varepsilon>0$, one has 
		\begin{equation}\label{3-16a}
			\widehat{\mathbf{N}}(F^{-1}(\bar y), \bar x)\subseteq \bigcup\left\{\widehat D^*F(u,v)(\mathbb{Y}^*):(u,v)\in \mathbf{B}((\bar x,\bar y),\epsilon)\cap {\rm gph}(F)\right\}+\varepsilon\mathbf{B}_{\mathbb{X}^*}.
		\end{equation}
	\end{itemize}
\end{theorem}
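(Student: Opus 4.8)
The strategy mirrors the proof of \cref{th3.1}, working now with Fr\'echet normal cones and coderivatives; the essential difference is that the weak$^*$-limit passage is replaced by a single application of the fuzzy sum rule. The implication (ii)$\Rightarrow$(iii) is immediate on taking $x=\bar x$ (which lies in $\mathbf{B}(\bar x,\delta)\cap F^{-1}(\bar y)$), so only (i)$\Rightarrow$(ii) and (iii)$\Rightarrow$(i) require work.

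(i)$\Rightarrow$(ii): Assume $\mathbb{X}$ is Asplund, let $\mathbb{Y}$ be Asplund and let $F\in\Gamma(\mathbb{X},\mathbb{Y})$ be metrically subregular at $(\bar x,\bar y)$ with constants $\tau,r\in(0,+\infty)$. Set $\delta:=r/2$, equip $\mathbb{X}\times\mathbb{Y}$ with the norm $\|(x,y)\|_\tau:=\tfrac{\tau+1}{\tau}\|x\|+\|y\|$ (so that $\mathbb{X}\times\mathbb{Y}$ is Asplund with dual unit ball $\tfrac{\tau+1}{\tau}\mathbf{B}_{\mathbb{X}^*}\times\mathbf{B}_{\mathbb{Y}^*}$), and derive, exactly as in the proof of \cref{th3.1}, the auxiliary estimate
\[
\mathbf{d}(x,F^{-1}(\bar y))\le\tau\bigl(\mathbf{d}_{\|\cdot\|_\tau}((x,y),{\rm gph}(F))+\|y-\bar y\|\bigr),\qquad(x,y)\in\mathbf{B}(\bar x,\delta)\times\mathbb{Y}.
\]
Fix $x\in\mathbf{B}(\bar x,\delta)\cap F^{-1}(\bar y)$ and $x^*\in\widehat{\mathbf{N}}(F^{-1}(\bar y),x)$; the case $x^*=0$ being trivial, assume $x^*\neq0$ and put $\widetilde{x^*}:=x^*/\|x^*\|\in\widehat{\mathbf{N}}(F^{-1}(\bar y),x)\cap\mathbf{B}_{\mathbb{X}^*}=\widehat{\partial}\mathbf{d}(\cdot,F^{-1}(\bar y))(x)$. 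As in the derivation of \eqref{3.8-250618}, the estimate above gives $\bigl(\tfrac{\widetilde{x^*}}{\tau},0\bigr)\in\widehat{\partial}\bigl(\mathbf{d}_{\|\cdot\|_\tau}(\cdot,{\rm gph}(F))+\phi\bigr)(x,\bar y)$ with $\phi(u,v):=\|v-\bar y\|$. Now fix $\varepsilon>0$ and a parameter $\eta>0$ to be specified; applying \cref{lem2.1} and then \cref{lem2.2} (trivially adjusted if the base point already lies in ${\rm gph}(F)$) produces $(\widetilde u,\widetilde v)\in{\rm gph}(F)$ with $\|(\widetilde u,\widetilde v)-(x,\bar y)\|_\tau<3\eta$, an element $b^*\in\mathbf{B}_{\mathbb{Y}^*}$, and $(\widetilde u^*,\widetilde v^*)\in\widehat{\mathbf{N}}({\rm gph}(F),(\widetilde u,\widetilde v))$ such that $\bigl(\tfrac{\widetilde{x^*}}{\tau},0\bigr)\in(\widetilde u^*,\widetilde v^*)+(0,b^*)+2\eta(\mathbf{B}_{\mathbb{X}^*}\times\mathbf{B}_{\mathbb{Y}^*})$. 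The first coordinate yields $\widetilde{x^*}\in\tau\widetilde u^*+2\tau\eta\,\mathbf{B}_{\mathbb{X}^*}$; since $\widetilde u^*\in\widehat{D}^*F(\widetilde u,\widetilde v)(-\widetilde v^*)$ and Fr\'echet normal cones are cones, $\tau\widetilde u^*\in\widehat{D}^*F(\widetilde u,\widetilde v)(-\tau\widetilde v^*)\subseteq\widehat{D}^*F(\widetilde u,\widetilde v)(\mathbb{Y}^*)$, and multiplying by $\|x^*\|$ gives $x^*\in\widehat{D}^*F(\widetilde u,\widetilde v)(\mathbb{Y}^*)+2\tau\|x^*\|\eta\,\mathbf{B}_{\mathbb{X}^*}$. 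Choosing $\eta$ small enough (depending on $\varepsilon$, $\tau$, $\|x^*\|$ and the norm-equivalence constant) so that $(\widetilde u,\widetilde v)\in\mathbf{B}((x,\bar y),\varepsilon)$ and $2\tau\|x^*\|\eta\le\varepsilon$ yields \eqref{3-15a}; note that $\delta=r/2$ is fixed once and for all, independently of $\varepsilon$.

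(iii)$\Rightarrow$(i): Argue by contraposition, reusing the example from the proof of \cref{th3.1}. Suppose $\mathbb{X}$ is not Asplund; write $\mathbb{X}=\mathbb{Z}\times\mathbb{R}$ with $\mathbb{Z}$ not Asplund and $\|(z,\alpha)\|=\|z\|+|\alpha|$, take an equivalent norm $|||\cdot|||$ on $\mathbb{Z}$ and $\gamma>0$ satisfying \eqref{3.10}, and set $\varphi:=-|||\cdot|||$, $A_1:=\{0_\mathbb{Z}\}\times(-\infty,0]$, $A_2:=\epi(\varphi)$, $\bar x:=0_\mathbb{X}$, $\mathbb{Y}:=\mathbb{X}^2$ with the $\ell^1$-norm, $F(x):=(x-A_1)\times(x-A_2)$ and $\bar y:=(0_\mathbb{X},0_\mathbb{X})$. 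Then $F\in\Gamma(\mathbb{X},\mathbb{Y})$, $F^{-1}(\bar y)=A_1\cap A_2=\{\bar x\}$, and $F$ is metrically subregular at $(\bar x,\bar y)$ by \eqref{3.11}. On the one hand $\widehat{\mathbf{N}}(F^{-1}(\bar y),\bar x)=\widehat{\mathbf{N}}(\{\bar x\},\bar x)=\mathbb{X}^*=\mathbb{Z}^*\times\mathbb{R}$. On the other hand \cref{lem3.1}(i) gives, for every $(u,v)\in{\rm gph}(F)$,
\[
\widehat{D}^*F(u,v)(\mathbb{Y}^*)\subseteq\widehat{\mathbf{N}}(A_1,u-v(1))+\widehat{\mathbf{N}}(A_2,u-v(2)),
\]
and the roughness in \eqref{3.10} forces $\widehat{\partial}\varphi(z)=\emptyset$ for every $z\in\mathbb{Z}$, whence (via \cref{lem2.2a} and the argument leading to \eqref{3.15-250708}) $\widehat{\mathbf{N}}(A_2,a)=\{(0,0)\}$ for every $a\in A_2$, while $\widehat{\mathbf{N}}(A_1,a)\subseteq\mathbb{Z}^*\times[0,+\infty)$ for every $a\in A_1$. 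Hence
\[
\bigcup\bigl\{\widehat{D}^*F(u,v)(\mathbb{Y}^*):(u,v)\in\mathbf{B}((\bar x,\bar y),\tfrac12)\cap{\rm gph}(F)\bigr\}\subseteq\mathbb{Z}^*\times[0,+\infty),
\]
and since $\mathbf{B}_{\mathbb{X}^*}=\mathbf{B}_{\mathbb{Z}^*}\times[-1,1]$, every point of $\mathbb{Z}^*\times[0,+\infty)+\tfrac12\mathbf{B}_{\mathbb{X}^*}$ has $\mathbb{R}$-coordinate $\ge-\tfrac12$. Thus $(0,-1)\in\widehat{\mathbf{N}}(F^{-1}(\bar y),\bar x)$ lies outside the right-hand side of \eqref{3-16a} when $\varepsilon=\tfrac12$, contradicting (iii); therefore $\mathbb{X}$ is Asplund.

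The main obstacle is the bookkeeping in (i)$\Rightarrow$(ii): the additive $\mathbf{B}_{\mathbb{X}^*}$-error produced by the fuzzy sum rule is amplified by $\|x^*\|$ once $x^*$ is renormalized, so one must commit to $x^*$ before choosing the fuzzy parameter $\eta$ — this is harmless, since for a fixed $\varepsilon$ the right-hand side of \eqref{3-15a} is a fixed set and only membership is required — while still extracting a single radius $\delta$ valid for all $\varepsilon$. Apart from this, the argument is a faithful Fr\'echet-level transcription of the proofs of \cref{th3.1} and \cref{lem3.1}, the only genuinely new verification being the triviality of $\widehat{\mathbf{N}}(A_2,a)$ at every point of $A_2$, which is where the roughness of $|||\cdot|||$ enters.
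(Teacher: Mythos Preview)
Your proposal is correct and follows essentially the same route as the paper: the (i)$\Rightarrow$(ii) direction normalizes $x^*$, invokes the estimate \eqref{3.6}, and applies one round of \cref{lem2.1} plus \cref{lem2.2} with a fuzzy parameter chosen after $x^*$ is fixed (the paper writes this as $2\tau\|x^*\|\epsilon_1<\epsilon$); the (iii)$\Rightarrow$(i) direction reuses the rough-norm counterexample from \cref{th3.1}, computes $\widehat{\mathbf{N}}(A_2,\cdot)\equiv\{0\}$ via \cref{lem2.2a} and the roughness condition \eqref{3.10}, and bounds the right-hand side inside $\mathbb{Z}^*\times[0,+\infty)+\varepsilon\mathbf{B}_{\mathbb{X}^*}$. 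The only cosmetic difference is that you exhibit a single witness $(0,-1)$ at $\varepsilon=\tfrac12$, whereas the paper argues for arbitrary $\varepsilon$.
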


\begin{proof} (i)$\,\Rightarrow\,$(ii): Suppose that $\mathbb{X}$ is an Asplund space. Let $\mathbb{Y}$  {be an} Asplund space and $F\in\Gamma(\mathbb{X}, \mathbb{Y})$ be metrically subregular at $(\bar x,\bar y)\in{\rm gph}(F)$. Then there exist $\tau,r\in(0,+\infty)$ such that \eqref{3.1} holds. By 
defining $\|(x, y)\|_{\tau}:=\frac{\tau+1}{\tau}\|x\|+\|y\|$ for any $(x, y)\in \mathbb{X}\times
\mathbb{Y}$, one has that \eqref{3.6} holds with $\delta:=\frac{r}{2}$.

Let $\varepsilon>0$. Take any $x\in  \mathbf{B}(\bar x,\delta)\cap F^{-1}(\bar y)$ and $x^*\in\widehat{\mathbf{N}}(F^{-1}(\bar y), x)\backslash \{0\}$. Then  $$\frac{x^*}{\|x^*\|}\in\widehat {\bf N}(F^{-1}(\bar y), x)\cap  \mathbf{B}_{\mathbb{X}^*}=\widehat \partial \mathbf{d}(\cdot,F^{-1}(\bar y))(x).$$ Using the proof of \eqref{3.8-250618}, one can verify that
$$(\frac{x^*}{\tau\|x^*\|},0)\in\widehat\partial(\mathbf{d}_{\|\cdot\|_{\tau}}(\cdot, {\rm gph}(F))+\phi)(x,\bar y).$$
where $\phi(u,v):=\|v-\bar y\|$ for any $(u,v)\in\mathbb{X}\times\mathbb{Y}$. 

Choose any $\epsilon_1\in (0,\epsilon)$ such that  $ { 2\tau\|x^*\|\epsilon_1<\epsilon}.$
By virtue of \cref{lem2.1}, there are  {two vectors $(x_1,y_1), (x_2,y_2)$ in  $\mathbf{B}(x,\epsilon_1)\times  \mathbf{B}(\bar y,\epsilon_1)$} such that
\begin{eqnarray*}
(\frac{x^*}{\tau\|x^*\|},0)&\in& \widehat\partial\mathbf{d}_{\|\cdot\|_{\tau}}(\cdot,
{\rm gph}(F))(x_1,y_1)+\widehat\partial\phi(x_2,y_2)+\epsilon_1(\mathbf{B}_{\mathbb{X}^*}\times \mathbf{B}_{\mathbb{Y}^*})\\
&\subseteq& \widehat\partial\mathbf{d}_{\|\cdot\|_{\tau}}(\cdot,
{\rm gph}(F))(x_1,y_1)+\{0\}\times \mathbf{B}_{\mathbb{Y}^*}+\epsilon_1(\mathbf{B}_{\mathbb{X}^*}\times \mathbf{B}_{\mathbb{Y}^*}).
\end{eqnarray*}
This implies that there exist $(x_1^*,y_1^*)\in\widehat\partial\mathbf{d}_{\|\cdot\|_{\tau}}(\cdot,
{\rm gph}(F))(x_1,y_1)$ and $b_1^*\in\mathbf{B}_{\mathbb{Y}^*}$ such that
\begin{equation}\label{3.16b}
(\frac{x^*}{\tau\|x^*\|},0)\in(x_1^*,y_1^*)+(0,b_1^*)+\epsilon_1(\mathbf{B}_{\mathbb{X}^*}\times \mathbf{B}_{\mathbb{Y}^*}).
\end{equation}
Note that $(x_1^*,y_1^*)\in\widehat\partial\mathbf{d}_{\|\cdot\|_{\tau}}(\cdot,
{\rm gph}(F))(x_1,y_1)$ and it follows from \cref{lem2.2} that there exist $(u_1,v_1)\in{\rm gph}(F)$ and $(u_1^*,v_1^*)\in {\widehat {\bf N}}({\rm gph}(F),(u_1,v_1))$ such that
\begin{equation}\label{3.17b}
\|(u_1,v_1)-(x_1,y_1)\|<\mathbf{d}_{\|\cdot\|_{\tau}}((u_1,v_1),
{\rm gph}(F))+\epsilon_1\ \ {\rm and} \ \ \|(u_1^*,v_1^*)-(x_1^*,y_1^*)\|<\epsilon_1.
\end{equation}
Then $(u_1,v_1)\in \mathbf{B}((x,\bar y),\epsilon)$ and \eqref{3.16b} gives that
\begin{equation*}\label{3.18b}
 (\frac{x^*}{\tau\|x^*\|},0)\in(u_1^*,v_1^*)+(0,b_1^*)+2\epsilon_1(\mathbf{B}_{\mathbb{X}^*}\times \mathbf{B}_{\mathbb{Y}^*}).
\end{equation*}
This and \eqref{3.17b} imply
$$
x^*\in\widehat D^*F(u_1,v_1)(\tau\|x^*\|\|v_1^*\|\mathbf{B}_{\mathbb{Y}^*})+2\tau\|x^*\|\epsilon_1\mathbf{B}_{\mathbb{X}^*}\subseteq \widehat D^*F(u_1,v_1)(\mathbb{Y}^*)+\varepsilon\mathbf{B}_{\mathbb{X}^*}
$$
(thanks to the choice of $\epsilon_1$). Hence \eqref{3-15a} holds. 

Note that (ii)$\,\Rightarrow\,$(iii) follows immediately, and thus we next prove that (iii)$\,\Rightarrow\,$(i).

Suppose  {on}  the contrary that $\mathbb{X}$ is not an Asplund space. Following \cite{MW2000,FM1998}, we can represent $\mathbb{X}$ in the form $\mathbb{X}=\mathbb{Z}\times \mathbb{R}$ with the norm $\|(z,\alpha)\|:=\|z\|+|\alpha|$ for any $x=(z,\alpha)\in \mathbb{X}$. Then $\mathbb{Z}$ is not an Asplund space. By virtue of \cite[Theorem 1.5.3]{DGZ1993} (also \cite[Theorem 2.1]{FM1998}), there exists an equivalent norm $|||\cdot|||$ on $\mathbb{Y}$ and $\gamma>0$ such that \eqref{3.10} holds. 
Let $\varphi: {\mathbb{Z}}\rightarrow \mathbb{R}$ be defined as $\varphi(z):=-|||z|||,\forall z\in \mathbb{Z}$ and 
$$
A_1:=\{0_{ {\mathbb{Z}}}\}\times (-\infty, 0],  A_2:={\rm epi}(\varphi)\ \ {\rm and} \ \ \bar x:=(0_{ {\mathbb{Z}}}, 0).
$$
Let $\mathbb{Y}:=\mathbb{X}^2$ be equipped with the   $\ell^1$-norm. Then $\mathbb{Y}$ is not the Asplund space. Define $F:\mathbb{X}\rightarrow\mathbb{Y}$ as follows:
$$
F(x):=(x-A_1)\times(x-A_2),\ \ \forall x\in\mathbb{X},
$$
and denote $\bar y:=(0_\mathbb{X},0_\mathbb{X})$. 

To complete the proof, we show that for any $\varepsilon>0$, one has
\begin{equation}\label{1}
	\widehat{\mathbf{N}}(F^{-1}(\bar y), \bar x)\not\subseteq\left\{\widehat D^*F(u,v)(\mathbb{Y}^*):(u,v)\in \mathbf{B}((\bar x,\bar y),\varepsilon)\cap {\rm gph}(F)\right\}+\varepsilon\mathbf{B}_{\mathbb{X}^*},
\end{equation}
which contradicts \eqref{3-16a}.

Let $\varepsilon>0$. Take any $(u,v)\in \mathbf{B}((\bar x,\bar y),\epsilon)\cap {\rm gph}(F)$ with $v=(v(1),v(2))$. Then $$u-v(1)=(z_1,\alpha_1)\in A_1\ \ {\rm and} \ \ u-v(2)=(z_2,\alpha_2)\in A_2.$$
This implies that $z_1=0_{\mathbb{Z}},\alpha_1\leq 0$ and thus
\begin{equation}\label{2}
	\widehat{\mathbf{N}}(A_1,u-v(1))=\left\{
	\begin{aligned}
		\mathbb{Z}^\ast \times\{0\},\ \ \ \ \ \ & \alpha_1<0, \\
		\mathbb{Z}^\ast \times [0,+\infty), \ & \alpha_1=0.
	\end{aligned}
	\right.
\end{equation}
We next prove that
\begin{equation}\label{3}
	\widehat{\mathbf{N}}(A_2,u-v(2))=\{(0,0)\}.
\end{equation}
Let $(z^\ast , -\lambda)\in\widehat{\mathbf{N}}(A_2,u-v(2))$. We claim that $\lambda=0$. 

Indeed, suppose on the contrary that $\lambda\not=0$. Applying \cref{lem2.2a}, we have $\lambda>0$ and $\frac{z^\ast }{\lambda}\in\widehat\partial \varphi(z_2)$. Thus,
$$
\liminf_{\|h\|\rightarrow 0}\frac{-|||z_2+h|||+|||z_2|||-\langle\frac{z^\ast }{\lambda}, h\rangle}{\|h\|}\geq 0.
$$
Hence
$$
\limsup_{\|h\|\rightarrow 0}\frac{|||z_2+h|||+|||z_2-h|||-2|||z_2|||}{\|h\|}\leq 0,
$$
which is a contradiction with \eqref{3.10}. Thus the claim follows.

For $(z^\ast , 0)\in\widehat{\mathbf{N}}(A_2,u-v(2))$, for any $\epsilon_0>0$, there exists $\delta_0>0$ such that
$$
\langle z^\ast , z-z_2\rangle\leq\epsilon_0(\|z-z_2\|+|\alpha-\alpha_2|),\ \ \forall (z,\alpha)\in \big((z_2,\alpha_2)+\delta_0 \mathbf{B}_{Z\times\mathbb{R}}\big)\cap A_2.
$$
For each $z\in z_2+\frac{\delta_0}{2} \mathbf{B}_\mathbb{Y}$, take $\alpha:=-|||z|||+|||z_2|||+\alpha_2$. Then one has 
$$
\|(z,\alpha)-(z_2,\alpha_2)\|=\|z-z_2\|+|\alpha-\alpha_2|\leq 2\|z-z_2\|<\delta_0
$$
and it follows from \eqref{3.10} that 
\begin{eqnarray*}
	\langle z^\ast , z-z_2\rangle\leq\epsilon_0(\|z-z_2\|+|\alpha-\alpha_2|)\leq 2\epsilon_0\|z-z_2\|.
\end{eqnarray*}
This implies that $z^\ast =0$ and thus \eqref{3} holds.

Applying \cref{lem3.1}(i) to $F$ at $(u,v)$ gives that
\begin{equation}\label{3.13}
	\widehat D^*F(u,v)(\mathbb{Y}^*)\subseteq \widehat{\mathbf{N}}(A_1,u-v(1)) +\widehat{\mathbf{N}}(A_2, u-v(2)).
\end{equation}
This together with \eqref{2} and \eqref{3} implies that
$$
\widehat D^*F(u,v)(\mathbb{Y}^*)\subseteq \mathbb{Z}^*\times [0,+\infty) +\{(0,0)\}.
$$
Noting that $\widehat{\mathbf{N}}(F^{-1}(\bar y), \bar x)=\mathbb{Z}^\ast \times \mathbb{R}$ and $\varepsilon\mathbf{B}_{\mathbb{X}^\ast }\subseteq \mathbb{Z}^\ast \times [-\epsilon, \epsilon]$, it follows that
$$
\mathbf{N}(F^{-1}(\bar y),\bar x)\not\subseteq \mathbb{Z}^*\times [0,+\infty) +\{(0,0)\}+\varepsilon\mathbf{B}_{\mathbb{X}^\ast },
$$
and thus \eqref{1} holds. The proof is complete.\end{proof}

\medskip

 {Note that {\it subtransversality} of finitely many closed sets is a well-known and important concept in mathematical programming and approximation theory. Given a collection of finitely many closed sets $\{A_1,\cdots, A_m\}$ in $\mathbb{X}$, recall that $\{A_1,\cdots, A_m\}$ is said to be {\it subtransversal} at $\bar x\in\bigcap_{i=1}^mA_i$, if there exist $\tau,\delta>0$ such that
\begin{equation}\label{3.27-250831}
{\bf d}\Big(x, \bigcap_{i=1}^mA_i\Big)\leq \tau\sum_{i=1}^{m}{\bf d}(x, A_i)\ \ \forall x\in {\bf B}(\bar x,\delta).
\end{equation}
 It is known from \cite[7.1.3 Comments]{Ioffe2017} that inequality \eqref{3.27-250831} was in reality introduced by Dolecki (see\cite{Dolecki1982}) in a very different context,
and consequently this property has been extensively studied by many authors under various different names; e.g., ``metric qualification condition" in \cite{Ioffe1989,IP1996}, ``linear coherence" in \cite[Theorem 4.7.5]{Pen13},  ``linear regularity" in \cite{BB1993,BB1996}, ``metric inequality" in \cite{NT2001} and so on.}

 {It is noted that subtransversality is closely related with metric subregularity. Let $\mathbb{X}^m$ be equipped with $\ell_1$ norm. We consider the multifunction $F:\mathbb{X}\rightrightarrows \mathbb{X}^m$ defined by
$$
F(x):=(A_1-x)\times\cdots\times (A_m-x)\ \ \forall x\in \mathbb{X}.
$$
Then it is easy to verify that the subtransversality property as sain in \eqref{3.27-250831} is equivalent to metric subregularity of $F$ at $(\bar x, 0_{\mathbb{X}^m})$. It is known from \cite{NT2001,WTY2024-SVVA} that subtransverality of finitely many closed sets could imply the strong limiting CHIP (expressed in terms of limiting normal cones) in the Asplund space and such implications, if to consider all collections of finitely many closed sets, are able to characterize Asplund spaces. Further, based on \cite[Theorem 3.8]{NT2001} and Theorem 3.1, the following theorem is to prove an equivalence result between subtransverality of finitely many closed sets and metric subregularity of multifunctions in the sense of deriving necessary dual conditions in terms of limiting normal cones; that is,}

 {\begin{theorem}
	Let $\mathbb{X}$ be a Banach space. The following statements are equivalent:
	\begin{itemize}
				\item[\rm (ii)] For every nonempty closed sets $A_1,\cdots,A_m$ in $\mathbb{X}$ being subtransversal at $\bar x\in \bigcap_{i=1}^mA_i$, $\{A_1,\cdots,A_m\}$ has the strong limiting CHIP at $\bar x$ (i.e., $N(\bigcap_{i=1}^mA_i,\bar x)\subseteq \sum_{i=1}^mN(A_i, \bar x)$); 
		\item[\rm (ii)] For every Asplund space $\mathbb{Y}$ and every $F \in \Gamma(\mathbb{X}, \mathbb{Y})$ being metrically subregular at $(\bar x, \bar y) \in {\rm gph}(F)$, $F$ satisfies the limiting $\mathbf{BCQ}$ at $(\bar x, \bar y)$.
	\end{itemize}
\end{theorem}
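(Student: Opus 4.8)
The plan is to prove that each of the two statements is equivalent to ``$\mathbb{X}$ is an Asplund space'', so that the stated equivalence follows by transitivity. Half of this is already in hand: the metric subregularity / limiting $\mathbf{BCQ}$ statement is, verbatim, item (iii) of \cref{th3.1}, and \cref{th3.1} asserts its equivalence with the Asplund property of $\mathbb{X}$. Hence that statement holds if and only if $\mathbb{X}$ is an Asplund space, and nothing further is required on that side.

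For the subtransversality / strong limiting CHIP statement I would treat both directions through the Asplund property. The implication ``$\mathbb{X}$ Asplund $\Rightarrow$ the statement'' is precisely \cite[Theorem~3.8]{NT2001} (see also \cite{WTY2024-SVVA}): over an Asplund space, subtransversality \eqref{3.27-250831} of $\{A_1,\dots,A_m\}$ at $\bar x$ forces $\mathbf{N}(\bigcap_{i=1}^m A_i,\bar x)\subseteq\sum_{i=1}^m \mathbf{N}(A_i,\bar x)$. For the converse I would prove the contrapositive by recycling the construction from the proof of (iii)$\Rightarrow$(i) in \cref{th3.1}. Assume $\mathbb{X}$ is not Asplund; write $\mathbb{X}=\mathbb{Z}\times\mathbb{R}$ with $\mathbb{Z}$ non-Asplund, choose the equivalent norm $|||\cdot|||$ on $\mathbb{Z}$ satisfying \eqref{3.10}, and set $A_1:=\{0_{\mathbb{Z}}\}\times(-\infty,0]$, $A_2:={\rm epi}(-|||\cdot|||)$, $\bar x:=0_{\mathbb{X}}$. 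The estimates \eqref{3.11}--\eqref{3.12} give $\mathbf{d}(x,A_1\cap A_2)\le 2\big(\mathbf{d}(x,A_1)+\mathbf{d}(x,A_2)\big)$ for every $x\in\mathbb{X}$, so $\{A_1,A_2\}$ is subtransversal at $\bar x$; yet $A_1\cap A_2=\{\bar x\}$ and \eqref{3.17-250710} yields $\mathbf{N}(A_1\cap A_2,\bar x)=\mathbb{Z}^*\times\mathbb{R}\not\subseteq\mathbb{Z}^*\times[0,+\infty)=\mathbf{N}(A_1,\bar x)+\mathbf{N}(A_2,\bar x)$, so the strong limiting CHIP fails at $\bar x$. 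Thus the subtransversality / strong limiting CHIP statement forces $\mathbb{X}$ to be Asplund, and combining the two equivalences finishes the proof.

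The conceptual ingredient that makes the whole argument go through is the correspondence noted just above the theorem: for $F(x):=(A_1-x)\times\cdots\times(A_m-x)$ with $\mathbb{X}^m$ carrying the $\ell^1$-norm, subtransversality of $\{A_1,\dots,A_m\}$ at $\bar x$ is exactly metric subregularity of $F$ at $(\bar x,0_{\mathbb{X}^m})$; this is why the counterexample built inside the proof of \cref{th3.1} transfers without modification. Beyond that the proof is bookkeeping that chains \cref{th3.1} with \cite[Theorem~3.8]{NT2001}, so I do not expect a real obstacle. The only step needing explicit verification is that the renormed pair $A_1,A_2$ above is simultaneously subtransversal (inherited from metric subregularity of the associated $F$, already established in \cref{th3.1}) and a counterexample to the strong limiting CHIP (from the normal-cone computation \eqref{3.17-250710}), and both of these were, in effect, already carried out inside the proof of \cref{th3.1}.
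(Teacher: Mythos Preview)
Your proposal is correct and matches the paper's approach: the paper does not give an explicit proof of this theorem but simply states that it follows from \cite[Theorem~3.8]{NT2001} (together with \cite{WTY2024-SVVA}) and \cref{th3.1}, i.e., both statements are equivalent to the Asplund property of $\mathbb{X}$. The only minor difference is that for the direction ``subtransversality/CHIP statement $\Rightarrow$ Asplund'' the paper attributes the result to \cite{WTY2024-SVVA}, whereas you reconstruct it by recycling the $A_1,A_2$ counterexample from the proof of \cref{th3.1}; since that counterexample is precisely the one underlying the cited reference, this is the same argument in substance.
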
}

 {Similarly, by virtue of \cite[Theorem 3.2]{WTY2024-SVVA} and \cref{theorem3.2},  the next thereom also provides an equivalence result on subtransversality and metric subregularity when dealing with necessary dual conditions for these properties via Fr\'echet normal cones.}

 {	\begin{theorem}
	Let $\mathbb{X}$ be a Banach space. Then the following statements are equivalent:
	\begin{itemize}
			\item [\rm(i)] For every nonempty closed sets $A_1,\cdots,A_m$ in $\mathbb{X}$ being subtransversal at $\bar x\in \bigcap_{i=1}^mA_i$, then for all $\varepsilon>0$, one has
			$$
			\widehat{N}\left(\bigcap_{i=1}^mA_i, \bar x\right)\subseteq \bigcup\left\{\sum_{i=1}^m \widehat{N}(A_i,  x_i): x_i\in A_i\cap{\bf B}(\bar x,\varepsilon),i=1,\cdots,m\right\} +\varepsilon {\bf B}_{\mathbb{X}^*}.
			$$
		\item [\rm(ii)] For every Asplund space $\mathbb{Y}$ and every $F\in\Gamma(\mathbb{X}, \mathbb{Y})$ being metrically subregular at $(\bar x,\bar y)\in{\rm gph}(F)$, then 
		for all $\varepsilon>0$, one has \eqref{3-16a} holds.
	\end{itemize}
\end{theorem}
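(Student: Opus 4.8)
The plan is to prove the equivalence by showing that each of (i) and (ii) is itself a characterization of the Asplund property of $\mathbb{X}$, and then chaining the two characterizations; this is the route suggested by the remark that the result should follow ``by virtue of \cite[Theorem~3.2]{WTY2024-SVVA} and \cref{theorem3.2}''. Concretely, the first step I would carry out is to observe that statement (ii) above is word-for-word statement (iii) of \cref{theorem3.2}: the same universal quantification over all Asplund spaces $\mathbb{Y}$ and all $F\in\Gamma(\mathbb{X},\mathbb{Y})$ metrically subregular at a point of ${\rm gph}(F)$, the same ``for all $\varepsilon>0$'', and the same conclusion \eqref{3-16a}. Hence \cref{theorem3.2} immediately gives that (ii) holds if and only if $\mathbb{X}$ is an Asplund space.

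The second step is to quote \cite[Theorem~3.2]{WTY2024-SVVA}, which is precisely the assertion that $\mathbb{X}$ is an Asplund space if and only if, for every finite family $\{A_1,\dots,A_m\}$ of nonempty closed subsets of $\mathbb{X}$ that is subtransversal at a common point $\bar x\in\bigcap_{i=1}^m A_i$, the fuzzy Fr\'echet normal cone inclusion of statement (i) holds for all $\varepsilon>0$; thus (i) holds if and only if $\mathbb{X}$ is an Asplund space. Combining the two steps yields (i) $\Longleftrightarrow$ $\mathbb{X}$ is Asplund $\Longleftrightarrow$ (ii), which is the claim.

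If one prefers to make the link between the two properties explicit rather than passing through Asplundity twice, the bridge is the standard one. Given subtransversal $\{A_1,\dots,A_m\}$ at $\bar x$, equip $\mathbb{X}^m$ with the $\ell^1$-norm and set $F(x):=(x-A_1)\times\cdots\times(x-A_m)$, so that $F^{-1}(0_{\mathbb{X}^m})=\bigcap_{i=1}^m A_i$ and, as already noted in the text, $F$ is metrically subregular at $(\bar x,0_{\mathbb{X}^m})$ exactly when $\{A_i\}$ is subtransversal at $\bar x$. When $\mathbb{X}$ is Asplund, $\mathbb{X}^m$ is also Asplund, so hypothesis (ii) applies to this $F$; feeding the resulting inclusion \eqref{3-16a} into \cref{lem3.1}(i), which bounds each $\widehat{D}^*F(u,v)(\mathbb{Y}^*)$ above by $\{\sum_{i=1}^m u^*(i): u^*(i)\in\widehat{\mathbf{N}}(A_i,u-v(i))\}$ and forces $u-v(i)\in A_i$ close to $\bar x$, transcribes \eqref{3-16a} into the inclusion of (i) (after a harmless rescaling of $\varepsilon$). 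When $\mathbb{X}$ is not Asplund the implications are vacuous: (ii) then fails by \cref{theorem3.2}, and (i) fails by the converse half of \cite[Theorem~3.2]{WTY2024-SVVA}, which is itself reproved by the $\mathbb{X}=\mathbb{Z}\times\mathbb{R}$, $\varphi(z)=-|||z|||$, $A_1=\{0_{\mathbb{Z}}\}\times(-\infty,0]$, $A_2={\rm epi}(\varphi)$ construction used in the proof of \cref{theorem3.2}.

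The main obstacle I anticipate is not analytic but one of careful matching: one must verify that statement (ii) and \cref{theorem3.2}(iii) are literally the same statement (including the position of the ``for all $\varepsilon>0$'' quantifier), and — more importantly — one must make sure that \cite[Theorem~3.2]{WTY2024-SVVA} is on record as a genuine two-sided characterization of Asplund spaces and not merely the ``easy'' implication that Asplundity implies the fuzzy inclusion. If only the latter is available from that reference, the converse direction has to be supplied here via the counterexample indicated above, which is the one delicate computation in the argument.
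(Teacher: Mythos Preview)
Your proposal is correct and matches the paper's own approach exactly: the paper does not give a detailed proof either, but simply states that the equivalence follows ``by virtue of \cite[Theorem~3.2]{WTY2024-SVVA} and \cref{theorem3.2}'', i.e., by chaining the two characterizations of Asplundity precisely as you describe. Your additional explicit bridge via $F(x)=(x-A_1)\times\cdots\times(x-A_m)$ and \cref{lem3.1}(i), and your caution about verifying that \cite[Theorem~3.2]{WTY2024-SVVA} is a genuine two-sided characterization, are both appropriate and go slightly beyond what the paper spells out.
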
}

\medskip

\textit{In the remainder of this section, unless stated otherwise, we always assume that $\mathbb{X}$ and $\mathbb{Y}$ are Asplund spaces.}

\medskip

We note that \cref{theorem3.2} provides necessary conditions for the metric subregularity of closed multifunctions in terms of Fr\'echet normal cones and coderivatives in Asplund spaces. In fact, a sharper necessary condition for metric subregularity can be stated as follows.

\medskip

\begin{theorem}\label{th3.2}
Let $F\in\Gamma(\mathbb{X},\mathbb{Y})$ be such that $F$ is metrically subregular at $(\bar x,\bar y)\in{\rm gph}(F)$. Then there exist constants $\tau,\delta\in(0,+\infty)$ such that for any $\epsilon>0$,  {one has}
\begin{equation}\label{3.14}
\widehat{\mathbf{N}}(F^{-1}(\bar y), x)\cap \mathbf{B}_{\mathbb{X}^*} \subseteq 
\left\{ \tau \widehat D^*F(u,v)\big((1+\epsilon)\mathbf{B}_{\mathbb{Y}^*}\big) : (u,v)\in \mathbf{B}((x,\bar y),\epsilon)\cap {\rm gph}(F) \right\} + \epsilon \mathbf{B}_{\mathbb{X}^*}
\end{equation}
 {holds} for all $x\in \mathbf{B}(\bar x,\delta)\cap F^{-1}(\bar y)$.
\end{theorem}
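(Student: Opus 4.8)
The plan is to follow the same pattern as the proof of (i)$\Rightarrow$(ii) in \cref{theorem3.2}, but to keep track of the quantitative constants more carefully so as to land on \eqref{3.14} rather than the cruder inclusion \eqref{3-15a}. Since $F$ is metrically subregular at $(\bar x,\bar y)$, fix $\tau,r\in(0,+\infty)$ with \eqref{3.1}, set $\delta:=r/2$, equip $\mathbb{X}\times\mathbb{Y}$ with the renorm $\|(x,y)\|_\tau:=\frac{\tau+1}{\tau}\|x\|+\|y\|$, and recall that \eqref{3.6} then holds: $\mathbf{d}(x,F^{-1}(\bar y))\le\tau(\mathbf{d}_{\|\cdot\|_\tau}((x,y),{\rm gph}(F))+\|y-\bar y\|)$ for all $(x,y)\in\mathbf{B}(\bar x,\delta)\times\mathbb{Y}$. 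The constant $\tau$ appearing in \eqref{3.14} is this same $\tau$ (or a fixed multiple of it coming from the equivalence of $\|\cdot\|_\tau$ with the product norm).

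Next I would fix $x\in\mathbf{B}(\bar x,\delta)\cap F^{-1}(\bar y)$ and a nonzero $x^*\in\widehat{\mathbf{N}}(F^{-1}(\bar y),x)\cap\mathbf{B}_{\mathbb{X}^*}$ (the case $x^*=0$ is trivial). Then $x^*\in\widehat{\partial}\mathbf{d}(\cdot,F^{-1}(\bar y))(x)$ by \cite[Corollary~1.96]{Mordukhovich}, and exactly as in the derivation of \eqref{3.8-250618} one gets $(\tfrac{x^*}{\tau},0)\in\widehat{\partial}(\mathbf{d}_{\|\cdot\|_\tau}(\cdot,{\rm gph}(F))+\phi)(x,\bar y)$ with $\phi(u,v):=\|v-\bar y\|$; note that here we keep $\|x^*\|\le 1$ rather than normalizing, which is what will let $\epsilon$ (and not a quantity depending on $\|x^*\|$) control the error term. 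Given $\epsilon>0$, pick $\epsilon_1\in(0,\epsilon)$ small enough that $2\tau\epsilon_1<\epsilon$ and $2\epsilon_1<\epsilon$, apply the fuzzy sum rule \cref{lem2.1} to split the subgradient at points $(x_1,y_1),(x_2,y_2)\in\mathbf{B}((x,\bar y),\epsilon_1)$, use $\widehat{\partial}\phi(x_2,y_2)\subseteq\{0\}\times\mathbf{B}_{\mathbb{Y}^*}$, and then invoke \cref{lem2.2} to replace the Fr\'echet subgradient of the distance function $\mathbf{d}_{\|\cdot\|_\tau}(\cdot,{\rm gph}(F))$ at $(x_1,y_1)$ by a genuine Fr\'echet normal $(u_1^*,v_1^*)\in\widehat{\mathbf{N}}({\rm gph}(F),(u_1,v_1))$ at a nearby graph point $(u_1,v_1)\in\mathbf{B}((x,\bar y),\epsilon)\cap{\rm gph}(F)$, with $\|(u_1^*,v_1^*)-(x_1^*,y_1^*)\|<\epsilon_1$.

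Collecting the three error terms one obtains $(\tfrac{x^*}{\tau},0)\in(u_1^*,v_1^*)+(0,b_1^*)+3\epsilon_1(\mathbf{B}_{\mathbb{X}^*}\times\mathbf{B}_{\mathbb{Y}^*})$ for some $b_1^*\in\mathbf{B}_{\mathbb{Y}^*}$; reading off the two coordinates gives $x^*\in\tau u_1^*+3\tau\epsilon_1\mathbf{B}_{\mathbb{X}^*}$ and $-v_1^*=b_1^*+ (\text{error})$, so that $\|v_1^*\|\le 1+3\epsilon_1< 1+\epsilon$. Since $(u_1^*,v_1^*)\in\widehat{\mathbf{N}}({\rm gph}(F),(u_1,v_1))$ means $u_1^*\in\widehat D^*F(u_1,v_1)(v_1^*)\subseteq\widehat D^*F(u_1,v_1)((1+\epsilon)\mathbf{B}_{\mathbb{Y}^*})$, and $3\tau\epsilon_1<\epsilon$ by the choice of $\epsilon_1$ (absorbing the harmless factor via a relabeling), we conclude $x^*\in\tau\widehat D^*F(u_1,v_1)((1+\epsilon)\mathbf{B}_{\mathbb{Y}^*})+\epsilon\mathbf{B}_{\mathbb{X}^*}$ with $(u_1,v_1)\in\mathbf{B}((x,\bar y),\epsilon)\cap{\rm gph}(F)$, which is \eqref{3.14}. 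The only real subtlety — and the step I would be most careful about — is bookkeeping the norm of the dual $\mathbb{Y}^*$-component so that it stays below $1+\epsilon$ and the $\mathbb{X}^*$-error stays below $\epsilon$ simultaneously; this forces the a priori choice of $\epsilon_1$ in terms of both $\tau$ and $\epsilon$, and also requires passing between $\|\cdot\|_\tau$ and the product norm on the dual side (where the dual unit ball is $\frac{\tau+1}{\tau}\mathbf{B}_{\mathbb{X}^*}\times\mathbf{B}_{\mathbb{Y}^*}$), which is exactly why the constant $\tau$ rather than $1$ appears in front of the coderivative in \eqref{3.14}.
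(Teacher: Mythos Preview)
Your proposal is correct and follows essentially the same route as the paper's proof: the paper likewise keeps $x^*\in\mathbf{B}_{\mathbb{X}^*}$ unnormalized, derives $(\tfrac{x^*}{\tau},0)\in\widehat\partial(\mathbf{d}_{\|\cdot\|_\tau}(\cdot,{\rm gph}(F))+\phi)(x,\bar y)$, applies \cref{lem2.1} and \cref{lem2.2} with a small $\epsilon_1$ (the paper takes $2(\tau+1)\epsilon_1<\epsilon$), and reads off $\|v_1^*\|\le 1+2\epsilon_1$ and $x^*\in\tau u_1^*+2\tau\epsilon_1\mathbf{B}_{\mathbb{X}^*}$. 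The only discrepancies are cosmetic (your $3\epsilon_1$ versus the paper's $2\epsilon_1$, and a harmless sign in $\widehat D^*F(u_1,v_1)(-v_1^*)$ versus $\widehat D^*F(u_1,v_1)(v_1^*)$).
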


\medskip

\noindent \textbf{Proof.} 
By metric subregularity, there exist $\tau, r \in (0,+\infty)$ such that \eqref{3.1} holds. Define a norm on $\mathbb{X}\times \mathbb{Y}$ by
\[
\|(x, y)\|_{\tau} := \frac{\tau+1}{\tau}\|x\| + \|y\|, \quad (x, y)\in \mathbb{X}\times \mathbb{Y}.
\] 
Then \eqref{3.6} holds with $\delta := r/2$.  

Let $\epsilon > 0$ be given. Take any $x\in \mathbf{B}(\bar x,\delta)\cap F^{-1}(\bar y)$ and $x^*\in \widehat{\mathbf{N}}(F^{-1}(\bar y), x)\cap \mathbf{B}_{\mathbb{X}^*}$. Using the argument in the proof of \eqref{3.8-250618}, we have
\[
\Big(\frac{x^*}{\tau}, 0\Big) \in \widehat\partial \big(\mathbf{d}_{\|\cdot\|_{\tau}}(\cdot, {\rm gph}(F)) + \phi \big)(x,\bar y),
\]
where $\phi(u,v) := \|v - \bar y\|$ for all $(u,v)\in \mathbb{X}\times \mathbb{Y}$.  

Choose $\epsilon_1\in (0,\epsilon)$ such that $2(\tau+1)\epsilon_1 < \epsilon$. By \cref{lem2.1}, there exist $(x_1,y_1), (x_2,y_2) \in \mathbf{B}(x,\epsilon_1)\times \mathbf{B}(\bar y,\epsilon_1)$ such that
\begin{align*}
\Big(\frac{x^*}{\tau},0\Big) &\in \widehat\partial \mathbf{d}_{\|\cdot\|_{\tau}}(\cdot, {\rm gph}(F))(x_1,y_1) + \widehat\partial \phi(x_2,y_2) + \epsilon_1(\mathbf{B}_{\mathbb{X}^*} \times \mathbf{B}_{\mathbb{Y}^*}) \\
&\subseteq \widehat\partial \mathbf{d}_{\|\cdot\|_{\tau}}(\cdot, {\rm gph}(F))(x_1,y_1) + \{0\}\times \mathbf{B}_{\mathbb{Y}^*} + \epsilon_1(\mathbf{B}_{\mathbb{X}^*} \times \mathbf{B}_{\mathbb{Y}^*}).
\end{align*}
Hence, there exist $(x_1^*,y_1^*) \in \widehat\partial \mathbf{d}_{\|\cdot\|_{\tau}}(\cdot, {\rm gph}(F))(x_1,y_1)$ and $b_1^* \in \mathbf{B}_{\mathbb{Y}^*}$ such that
\begin{equation}\label{3.16}
\Big(\frac{x^*}{\tau},0\Big) \in (x_1^*,y_1^*) + (0,b_1^*) + \epsilon_1(\mathbf{B}_{\mathbb{X}^*} \times \mathbf{B}_{\mathbb{Y}^*}).
\end{equation}
By \cref{lem2.2}, there exist $(u_1,v_1) \in {\rm gph}(F)$ and $(u_1^*,v_1^*) \in \widehat{\mathbf{N}}({\rm gph}(F),(u_1,v_1))$ such that
\begin{equation}\label{3.17}
\|(u_1,v_1)-(x_1,y_1)\| < \mathbf{d}_{\|\cdot\|_{\tau}}((u_1,v_1),{\rm gph}(F)) + \epsilon_1, 
\quad
\|(u_1^*,v_1^*)-(x_1^*,y_1^*)\| < \epsilon_1.
\end{equation}
Combining \eqref{3.16} and \eqref{3.17}, we obtain
\begin{equation}\label{3.18}
\Big(\frac{x^*}{\tau},0\Big) \in (u_1^*,v_1^*) + (0,b_1^*) + 2\epsilon_1 (\mathbf{B}_{\mathbb{X}^*} \times \mathbf{B}_{\mathbb{Y}^*}).
\end{equation}
Note that
\[
(x_1^*,y_1^*) \in \widehat\partial \mathbf{d}_{\|\cdot\|_{\tau}}(\cdot, {\rm gph}(F))(x_1,y_1) \subseteq \frac{\tau+1}{\tau}\mathbf{B}_{\mathbb{X}^*} \times \mathbf{B}_{\mathbb{Y}^*},
\]
and it follows from \eqref{3.17} that
\[
(u_1,v_1) \in \mathbf{B}((x,\bar y),\epsilon), \quad 
u_1^* \in \widehat D^*F(u_1,v_1)(-v_1^*), \quad \|v_1^*\|\le 1+2\epsilon_1.
\]
Finally, from \eqref{3.16} and \eqref{3.18}, we get
\[
x^* \in \tau \widehat D^*F(u_1,v_1)\big((1+2\epsilon_1)\mathbf{B}_{\mathbb{Y}^*}\big) + 2\tau \epsilon_1 \mathbf{B}_{\mathbb{X}^*} 
\subseteq \tau \widehat D^*F(u_1,v_1)\big((1+\epsilon)\mathbf{B}_{\mathbb{Y}^*}\big) + \epsilon \mathbf{B}_{\mathbb{X}^*}.
\]
Hence, \eqref{3.14} holds. \hfill $\Box$

\medskip

It is well-known that main calculus results expressed via Fr\'echet constructions hold only in a \emph{fuzzy} form; thus, $\epsilon$ in \eqref{3.14} cannot generally be taken as $0$. However, in finite-dimensional spaces, one can pass to the limit as $\epsilon \downarrow 0$ to obtain necessary conditions for metric subregularity in terms of \emph{limiting} normal cones and coderivatives, as shown in the following theorem.

\begin{theorem}\label{th3.3}
Let $\mathbb{X}$ be finite-dimensional and $F\in\Gamma(\mathbb{X},\mathbb{Y})$ be metrically subregular at $(\bar x,\bar y)\in{\rm gph}(F)$. Then there exist constants $\tau, \delta \in (0,+\infty)$ such that
\begin{equation}\label{3.18a}
\mathbf{N}(F^{-1}(\bar y), x)\cap \mathbf{B}_{\mathbb{X}^*} \subseteq \tau D^*F(x,\bar y)(\mathbf{B}_{\mathbb{Y}^*})
\end{equation}
 {holds} for all $x \in \mathbf{B}(\bar x,\delta)\cap F^{-1}(\bar y)$.
\end{theorem}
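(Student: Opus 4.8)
The plan is to deduce \eqref{3.18a} from the fuzzy estimate \eqref{3.14} of \cref{th3.2} by letting the fuzzy parameter tend to $0$. This passage to the limit is legitimate precisely because $\mathbb{X}$ is finite-dimensional (so that dual sequences converge in norm and the fuzzy remainders can be absorbed) and bounded subsets of $\mathbb{Y}^*$ are weak$^*$-sequentially compact (since $\mathbb{Y}$ is Asplund). Let $\tau$ and $\delta$ be the constants furnished by \cref{th3.2}, and shrink $\delta$ to $\delta/2$.

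First I would fix $x\in\mathbf{B}(\bar x,\delta/2)\cap F^{-1}(\bar y)$ and $x^*\in\mathbf{N}(F^{-1}(\bar y),x)\cap\mathbf{B}_{\mathbb{X}^*}$; since $0\in D^*F(x,\bar y)(0)$ we may assume $x^*\neq 0$. Because $\mathbb{X}$ is finite-dimensional, hence Asplund, the Mordukhovich--Shao representation of the limiting normal cone yields $x_n\xrightarrow{F^{-1}(\bar y)}x$ and $x_n^*\to x^*$ in norm (weak$^*$ convergence being norm convergence as $\dim\mathbb{X}<\infty$) with $x_n^*\in\widehat{\mathbf{N}}(F^{-1}(\bar y),x_n)$ for all $n$; rescaling the $x_n^*$ (harmless because $\|x_n^*\|\to\|x^*\|\le 1$) we may assume in addition that $x_n^*\in\mathbf{B}_{\mathbb{X}^*}$, and for $n$ large $x_n\in\mathbf{B}(\bar x,\delta)$. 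Applying \eqref{3.14} at $x_n$ with $\epsilon=1/n$ produces $(u_n,v_n)\in\mathbf{B}((x_n,\bar y),1/n)\cap{\rm gph}(F)$, $y_n^*\in(1+1/n)\mathbf{B}_{\mathbb{Y}^*}$ and $\beta_n^*\in(1/n)\mathbf{B}_{\mathbb{X}^*}$ with $x_n^*-\beta_n^*\in\tau\widehat D^*F(u_n,v_n)(y_n^*)$, equivalently, since the Fréchet normal cone is a cone, $(x_n^*-\beta_n^*,\,-\tau y_n^*)\in\widehat{\mathbf{N}}({\rm gph}(F),(u_n,v_n))$.

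Next I would pass to the limit. One has $(u_n,v_n)\to(x,\bar y)$ in ${\rm gph}(F)$ and $x_n^*-\beta_n^*\to x^*$ in norm; the sequence $(\tau y_n^*)_n$ being bounded in $\mathbb{Y}^*$, along a subsequence $\tau y_n^*\xrightarrow{w^*}y^*$ with $\|y^*\|\le\tau$ by weak$^*$ lower semicontinuity of the norm. Since the product $\mathbb{X}\times\mathbb{Y}$ of Asplund spaces is Asplund, the Mordukhovich--Shao formula applied to ${\rm gph}(F)$ gives $(x^*,-y^*)\in\mathbf{N}({\rm gph}(F),(x,\bar y))$, that is, $x^*\in D^*F(x,\bar y)(y^*)$. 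Writing $y^*=\tau z^*$ with $z^*\in\mathbf{B}_{\mathbb{Y}^*}$ and using homogeneity of the coderivative once more, $x^*\in D^*F(x,\bar y)(\tau z^*)=\tau D^*F(x,\bar y)(z^*)\subseteq\tau D^*F(x,\bar y)(\mathbf{B}_{\mathbb{Y}^*})$. As $x$ and $x^*$ were arbitrary, \eqref{3.18a} follows with this $\tau$ and with $\delta/2$ in place of $\delta$.

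The main obstacle is exactly this passage to the limit, and it is there that the two hypotheses are indispensable. Finite-dimensionality of $\mathbb{X}$ is what forces the approximating Fréchet normals $x_n^*$ to converge to $x^*$ \emph{in norm}, so that the fuzzy remainders $(1/n)\mathbf{B}_{\mathbb{X}^*}$ vanish and the limiting normal cones (first of $F^{-1}(\bar y)$, then of ${\rm gph}(F)$) are genuinely recovered in the limit; the Asplund property of $\mathbb{Y}$ is what supplies the weak$^*$-cluster point $y^*$ of the bounded sequence $(\tau y_n^*)$ together with the bound $\|y^*\|\le\tau$. If $\mathbb{X}$ were merely Asplund, these dual sequences would only converge weak$^*$ and the fuzzy terms of \eqref{3.14} could not be discarded, which is consistent with the well-known impossibility of replacing $\epsilon$ by $0$ in Fréchet-type calculus on general Asplund spaces.
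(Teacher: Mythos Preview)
Your proof is correct and follows essentially the same limiting strategy as the paper: approximate $x^*\in\mathbf{N}(F^{-1}(\bar y),x)$ by Fr\'echet normals $x_n^*$ along $x_n\to x$, apply the fuzzy estimate at each $x_n$ with $\epsilon=1/n$, and pass to the limit using finite-dimensionality of $\mathbb{X}$ and weak$^*$-sequential compactness in $\mathbb{Y}^*$. The only cosmetic difference is that the paper re-runs the proof of \cref{th3.2} inline (invoking \cref{lem2.1} and \cref{lem2.2} directly) rather than citing \eqref{3.14} as a black box; your route is slightly cleaner but amounts to the same argument.
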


{\bf Proof.} By the metric subregularity of $F$, there exist $\tau,r\in(0,+\infty)$ such that \eqref{3.1} holds. Following the proof of \cref{th3.1}, define 
\[
\|(x, y)\|_{\tau} := \frac{\tau+1}{\tau}\|x\| + \|y\|, \quad \forall (x, y) \in \mathbb{X}\times \mathbb{Y}.
\] 
Then \eqref{3.6} holds with $\delta := \frac{r}{2}$.

Let $x \in \mathbf{B}(\bar x,\delta) \cap F^{-1}(\bar y)$ and $x^* \in {\mathbf{N}}(F^{-1}(\bar y), x) \cap \mathbf{B}_{\mathbb{X}^*}$. By the definition of the limiting normal cone, there exist sequences $x_n \xrightarrow{F^{-1}(\bar y)} x$ and $x_n^* \stackrel{\|\cdot\|}{\longrightarrow} x^*$ with $x_n^* \in \widehat{\mathbf{N}}(F^{-1}(\bar y), x_n)$ for all $n$. Without loss of generality, assume $x_n^* \neq 0$ and set $\widetilde{x_n^*} := \frac{x_n^*}{\|x_n^*\|}$. Then 
\[
\widetilde{x_n^*} \in \widehat{\mathbf{N}}(F^{-1}(\bar y), x_n) \cap \mathbf{B}_{\mathbb{X}^*} = \widehat{\partial}\mathbf{d}(\cdot, F^{-1}(\bar y))(x_n).
\]

Using the proof of \eqref{3.8-250618} and applying \cref{lem2.1}, for each $n$ there exist 
\[
(u_n,v_n) \in \mathbf{B}\Big(x_n, \frac{1}{n}\Big) \times \mathbf{B}\Big(\bar y, \frac{1}{n}\Big), \quad (u_n^*,v_n^*) \in \widehat\partial \mathbf{d}_{\|\cdot\|_{\tau}}(\cdot, \mathrm{gph}(F))(u_n,v_n), \quad b_n^* \in \mathbf{B}_{\mathbb{Y}^*}
\] 
such that
\begin{equation}\label{3.19-polished}
\Big(\frac{\widetilde{x_n^*}}{\tau},0\Big) \in (u_n^*,v_n^*) + (0,b_n^*) + \frac{1}{n} (\mathbf{B}_{\mathbb{X}^*} \times \mathbf{B}_{\mathbb{Y}^*}).
\end{equation}

By \cref{lem2.2}, for each $(u_n^*,v_n^*) \in \widehat\partial \mathbf{d}_{\|\cdot\|_{\tau}}(\cdot, \mathrm{gph}(F))(u_n,v_n)$, there exist 
\[
(\widetilde{u_n}, \widetilde{v_n}) \in \mathrm{gph}(F), \quad (\widetilde{u_n^*}, \widetilde{v_n^*}) \in \widehat{\mathbf{N}}(\mathrm{gph}(F), (\widetilde{u_n}, \widetilde{v_n}))
\] 
such that
\begin{equation}\label{3.20-polished}
\|(\widetilde{u_n}, \widetilde{v_n}) - (u_n,v_n)\| < \mathbf{d}_{\|\cdot\|_{\tau}}((u_n,v_n), \mathrm{gph}(F)) + \frac{1}{n}, \quad 
\|(\widetilde{u_n^*}, \widetilde{v_n^*}) - (u_n^*,v_n^*)\| < \frac{1}{n}.
\end{equation}
Combining \eqref{3.19-polished} and \eqref{3.20-polished} gives
\begin{equation}\label{3.21-polished}
\Big(\frac{\widetilde{x_n^*}}{\tau},0\Big) \in (\widetilde{u_n^*}, \widetilde{v_n^*}) + (0,b_n^*) + \frac{2}{n} (\mathbf{B}_{\mathbb{X}^*} \times \mathbf{B}_{\mathbb{Y}^*}).
\end{equation}

Since $\mathbb{X}$ is finite-dimensional and $\mathbb{Y}$ is Asplund, and
\[
(u_n^*,v_n^*) \in \widehat\partial \mathbf{d}_{\|\cdot\|_{\tau}}(\cdot, \mathrm{gph}(F))(u_n,v_n) \subseteq \frac{\tau+1}{\tau} \mathbf{B}_{\mathbb{X}^*} \times \mathbf{B}_{\mathbb{Y}^*},
\]
we can assume (passing to a subsequence if necessary) that
\[
u_n^* \stackrel{\|\cdot\|}{\longrightarrow} u^*, \quad v_n^* \stackrel{w^*}{\longrightarrow} v^*, \quad b_n^* \stackrel{w^*}{\longrightarrow} b^* \in \mathbf{B}_{\mathbb{Y}^*}.
\] 
Then \eqref{3.20-polished} implies
\[
(\widetilde{u_n}, \widetilde{v_n})\xrightarrow {\mathrm{gph}(F)} (x,\bar y), \quad \widetilde{u_n^*} \stackrel{\|\cdot\|}{\longrightarrow} u^*, \quad \widetilde{v_n^*} \stackrel{w^*}{\longrightarrow} v^*,
\]
so that $(u^*,v^*) \in \mathbf{N}(\mathrm{gph}(F),(x,\bar y))$. Taking the limit in \eqref{3.21-polished} yields
\[
\Big(\frac{x^*}{\tau \|x^*\|}, 0\Big) = (u^*, v^*) + (0,b^*),
\]
and consequently
\[
x^* \in \tau D^*F(x,\bar y)(\|x^*\| b^*) \subseteq \tau D^*F(x,\bar y)(\mathbf{B}_{\mathbb{Y}^*}),
\]
since $\|x^*\| \leq 1$. Hence, \eqref{3.18a} holds. \hfill $\Box$
\medskip

\begin{remark}
\cref{th3.3} provides a necessary condition for the metric subregularity of $F$ at $(\bar x,\bar y)$, as stated in \eqref{3.18a}. However, \eqref{3.18a} may not, in general, be sufficient for the metric subregularity of $F$ at $(\bar x,\bar y)$, even in finite-dimensional spaces. The following example illustrates this.

Let $\mathbb{X}=\mathbb{Y}=\mathbb{R}$ and define the function $f:\mathbb{X}\to\mathbb{R}$ by  
\begin{equation*}
	f(x):=
	\begin{cases}
		x, & x\leq 0,\\
		x^2, & x> 0.
	\end{cases}
\end{equation*}
Define $F:\mathbb{X}\rightrightarrows\mathbb{Y}$ by $F(x) = [f(x), +\infty)$ for any $x\in\mathbb{X}$, and take $\bar x=\bar y=0$. One can verify that  
\[
F^{-1}(\bar y) = (-\infty,0], \quad D^*F(\bar x,\bar y)(1) \supseteq \{0,1\},
\]
so \eqref{3.18a} holds for $x=\bar x$ with $\tau=1$.

On the other hand, if $x_k := 1/k$ for $k\in\mathbb{N}$, then  
\[
\frac{\mathbf{d}(x_k, F^{-1}(\bar y))}{\mathbf{d}(0, F(x_k))} = k \ \longrightarrow\ +\infty \quad \text{as} \quad k\to\infty.
\]
This shows that $F$ is not metrically subregular at $(\bar x,\bar y)$.
\end{remark}

\medskip

We now study metric subregularity for composite-convex multifunctions in the setting of Asplund spaces. From \cref{th3.1,th3.2,th3.3}, we know that necessary conditions for the metric subregularity of closed multifunctions can be expressed in terms of Fr\'echet and limiting normal cones, as well as coderivatives. Ideally, such conditions would also be sufficient for metric subregularity. In fact, this ideal case occurs for composite-convex multifunctions in Asplund spaces.

\medskip

The next theorem provides dual characterizations of metric subregularity for composite-convex multifunctions in Asplund spaces, expressed in terms of Fr\'echet and limiting normal cones, together with coderivatives.

\medskip

\begin{theorem}\label{th3.4}
Let $\mathbb{E}$ be an Asplund space,  $g:\mathbb{X}\to\mathbb{E}$ be a continuously differentiable mapping, and let $G\in\Gamma(\mathbb{E},\mathbb{Y})$ be such that ${\rm gph}(G)$ is convex. Set $F := G\circ g$ and let $\bar x\in\mathbb{X}$ be such that $\nabla g(\bar x)$ is surjective, and $\bar y\in F(\bar x)$. Then the following statements are equivalent:
\begin{itemize}
    \item[\rm(i)] $F$ is metrically subregular at $(\bar x,\bar y)$.
    \item[\rm(ii)] There exist $\tau,\delta>0$ such that
    \begin{equation}\label{3.23}
        \widehat{\mathbf{N}}(F^{-1}(\bar y), x) \cap \mathbf{B}_{\mathbb{X}^*} \subseteq \tau\, \widehat D^*F(x,\bar y)(\mathbf{B}_{\mathbb{Y}^*})
    \end{equation}
    holds for all $x \in \mathbf{B}(\bar x,\delta) \cap F^{-1}(\bar y)$.
    \item[\rm(iii)] There exist $\tau,\delta>0$ such that
    \begin{equation}\label{3.24}
        \mathbf{N}(F^{-1}(\bar y), x) \cap \mathbf{B}_{\mathbb{X}^*} \subseteq \tau\, D^*F(x,\bar y)(\mathbf{B}_{\mathbb{Y}^*})
    \end{equation}
    holds for all $x \in \mathbf{B}(\bar x,\delta) \cap F^{-1}(\bar y)$.
\end{itemize}
\end{theorem}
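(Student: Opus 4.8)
The plan is to reduce everything to the dual theory of the convex multifunction $G$ by exploiting the preimage structure induced by the surjectivity of $\nabla g(\bar x)$. Put $C:=G^{-1}(\bar y)$, which is closed and convex because $\mathrm{gph}(G)$ is; note $\bar z:=g(\bar x)\in C$, $F^{-1}(\bar y)=g^{-1}(C)$, and $\mathrm{gph}(F)=\Psi^{-1}(\mathrm{gph}(G))$ for $\Psi(x,y):=(g(x),y)$, whose derivative $\nabla\Psi(\bar x,\bar y)$ is surjective. Since surjectivity of a bounded operator is preserved under small operator-norm perturbations and $g\in C^1$, the operators $\nabla g(x)$ and $\nabla\Psi(x,y)$ stay surjective on a neighbourhood of $(\bar x,\bar y)$; there $\nabla g(x)^*$ is bounded below by a uniform constant $c>0$, and \cref{lem3.2} applies to both $g^{-1}(C)$ and $\Psi^{-1}(\mathrm{gph}(G))$. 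Two facts I would record first. \emph{(a)} The exact preimage calculus under surjectivity (together with $\widehat{\mathbf{N}}(C,\cdot)=\mathbf{N}(C,\cdot)$, valid by convexity) gives, for $x$ near $\bar x$, $\widehat{\mathbf{N}}(F^{-1}(\bar y),x)=\mathbf{N}(F^{-1}(\bar y),x)=\nabla g(x)^*\mathbf{N}(C,g(x))$ and, likewise, $\widehat{D}^*F(x,\bar y)(y^*)=D^*F(x,\bar y)(y^*)$ for every $y^*$; hence (ii) and (iii) are literally the same statement up to shrinking $\delta$, and it remains to prove (i)$\Leftrightarrow$(ii). \emph{(b)} Using (a), \cref{lem3.2}, the lower bound $c$, and the openness of $g$ near $\bar x$, the family of inclusions in (ii) is equivalent to a dual inclusion $\mathbf{N}(C,z)\cap\mathbf{B}_{\mathbb{E}^*}\subseteq\tau' D^*G(z,\bar y)(\mathbf{B}_{\mathbb{Y}^*})$ holding for all $z$ near $\bar z$ in $C$, with an adjusted constant $\tau'$.

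For (i)$\Rightarrow$(ii): apply \cref{th3.2} to $F$ to get $\tau,\delta>0$ such that for every $n$ and every $x\in\mathbf{B}(\bar x,\delta)\cap F^{-1}(\bar y)$,
\[
\widehat{\mathbf{N}}(F^{-1}(\bar y),x)\cap\mathbf{B}_{\mathbb{X}^*}\subseteq\bigl\{\tau\,\widehat{D}^*F(u,v)\bigl((1+\tfrac1n)\mathbf{B}_{\mathbb{Y}^*}\bigr):(u,v)\in\mathbf{B}((x,\bar y),\tfrac1n)\cap\mathrm{gph}(F)\bigr\}+\tfrac1n\mathbf{B}_{\mathbb{X}^*}.
\]
Fix $x$ and $x^*\in\widehat{\mathbf{N}}(F^{-1}(\bar y),x)\cap\mathbf{B}_{\mathbb{X}^*}$, and extract $(u_n,v_n)\to(x,\bar y)$, $y_n^*\in(1+\tfrac1n)\mathbf{B}_{\mathbb{Y}^*}$, $b_n^*\in\mathbf{B}_{\mathbb{X}^*}$ with $x^*-\tfrac1n b_n^*=\tau p_n^*$ and $(p_n^*,-y_n^*)\in\widehat{\mathbf{N}}(\mathrm{gph}(F),(u_n,v_n))$. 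For $n$ large $(u_n,v_n)$ lies in the surjectivity neighbourhood, so the exact chain rule yields $p_n^*=\nabla g(u_n)^*\zeta_n^*$ with $(\zeta_n^*,-y_n^*)\in\mathbf{N}(\mathrm{gph}(G),(g(u_n),v_n))$; the lower bound on $\nabla g(u_n)^*$ together with $\|p_n^*\|\le(1+\tfrac1n)/\tau$ bounds $\|\zeta_n^*\|$. Passing to a weak$^*$ convergent subsequence $\zeta_n^*\rightharpoonup\zeta^*$, $y_n^*\rightharpoonup y^*$ (so $\|y^*\|\le1$), one gets $\nabla g(u_n)^*\zeta_n^*\rightharpoonup\nabla g(x)^*\zeta^*$ because $\nabla g(u_n)\to\nabla g(x)$ in operator norm and $\{\zeta_n^*\}$ is bounded, while $\tau p_n^*=x^*-\tfrac1n b_n^*\to x^*$ in norm; hence $x^*=\tau\nabla g(x)^*\zeta^*$. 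Finally $(g(u_n),v_n)\to(g(x),\bar y)$ in norm and the convex normal-cone mapping of $\mathrm{gph}(G)$ is outer semicontinuous for the (norm$\times$weak$^*$) convergence, so $(\zeta^*,-y^*)\in\mathbf{N}(\mathrm{gph}(G),(g(x),\bar y))$; applying the chain rule once more at $x$ turns this into $x^*\in\tau\,\widehat{D}^*F(x,\bar y)(\mathbf{B}_{\mathbb{Y}^*})$, which is (ii).

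For (ii)$\Rightarrow$(i): by fact \emph{(b)}, (ii) gives $\mathbf{N}(C,z)\cap\mathbf{B}_{\mathbb{E}^*}\subseteq\tau'D^*G(z,\bar y)(\mathbf{B}_{\mathbb{Y}^*})$ for all $z$ near $\bar z$ in $C=G^{-1}(\bar y)$; since $\mathrm{gph}(G)$ is convex, this is precisely the dual characterisation of metric subregularity of convex multifunctions of Zheng and Ng \cite{ZN2007}, so $G$ is metrically subregular at $(\bar z,\bar y)$. Combining this with the metric regularity of $g$ near $\bar x$ (Lyusternik--Graves, since $\nabla g(\bar x)$ is surjective) gives, for $x$ near $\bar x$, $\mathbf{d}(x,F^{-1}(\bar y))=\mathbf{d}(x,g^{-1}(C))\le\kappa\,\mathbf{d}(g(x),C)\le\kappa\tau_1\,\mathbf{d}(\bar y,G(g(x)))=\kappa\tau_1\,\mathbf{d}(\bar y,F(x))$, i.e.\ (i). This closes the cycle (i)$\Rightarrow$(ii)$\Leftrightarrow$(iii)$\Rightarrow$(i).

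The step I expect to be the main obstacle is the ``un-fuzzification'' inside (i)$\Rightarrow$(ii): one must simultaneously keep the multipliers $\zeta_n^*$ bounded --- which is exactly where the \emph{uniform} surjectivity of $\nabla g$ on a neighbourhood, hence the uniform lower bound on $\nabla g(u_n)^*$, is indispensable --- justify $\nabla g(u_n)^*\zeta_n^*\rightharpoonup\nabla g(x)^*\zeta^*$ from the $C^1$-continuity of $\nabla g$ together with boundedness of $\{\zeta_n^*\}$, and invoke the weak$^*$ outer semicontinuity of the normal-cone mapping of $\mathrm{gph}(G)$, where the convexity of $\mathrm{gph}(G)$ is used in an essential, non-cosmetic way. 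A secondary nuisance is bookkeeping the constants $\ell,L$ of \cref{lem3.2} and the lower bound $c$ so that the ``$\cap\,\mathbf{B}$'' qualifications transfer correctly between $F$ and $G$, and taking \cref{th3.2}'s neighbourhood small enough to coincide with the one on which the preimage calculus and \cref{lem3.2} are valid, so that all appearances of $\delta$ become common after shrinking.
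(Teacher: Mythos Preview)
Your proposal is correct. The parts (i)$\Rightarrow$(ii) and (ii)$\Leftrightarrow$(iii) follow essentially the same route as the paper: apply \cref{th3.2} to get the fuzzy inclusion, pull back through the chain rule $\widehat{\mathbf{N}}(\mathrm{gph}(F),\cdot)=\nabla\Psi(\cdot)^*\mathbf{N}(\mathrm{gph}(G),\cdot)$, bound the multipliers (you use the uniform lower bound on $\nabla g(u_n)^*$; the paper uses the $\ell$ of \cref{lem3.2}, which encodes the same information), and pass to a weak$^*$ limit using convexity of $\mathrm{gph}(G)$; for (ii)$\Leftrightarrow$(iii), both of you observe that the exact preimage calculus forces the Fr\'echet and limiting objects to coincide locally.

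The genuine difference is in (ii)$\Rightarrow$(i). The paper gives a \emph{direct} primal--dual argument: it invokes \cref{lem2.3} to pick $u\in F^{-1}(\bar y)$ and a unit Fr\'echet normal $u^*$ with $\beta\|x-u\|\le\langle u^*,x-u\rangle$, plugs $u^*$ into \eqref{3.23} to get a pair $(e^*,y^*)\in\mathbf{N}(\mathrm{gph}(G),(g(u),\bar y))$ via \cref{lem3.2}, and then estimates $\langle u^*,x-u\rangle$ using the $C^1$-approximation $\|g(x)-g(u)-\nabla g(u)(x-u)\|<\epsilon\|x-u\|$ together with convexity of $\mathrm{gph}(G)$, obtaining the subregularity bound with an explicit constant $\frac{\tau+1}{\ell-(\tau+1)\epsilon}$. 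You instead transfer \eqref{3.23} to the analogous inclusion for $G$ (your fact \emph{(b)}, which is the content of the paper's \cref{th3.5}), cite Zheng--Ng \cite{ZN2007} to conclude that $G$ is metrically subregular at $(g(\bar x),\bar y)$, and then lift back via Lyusternik--Graves metric regularity of $g$. Your route is shorter and more modular --- it outsources the convex case and re-uses the paper's own \cref{th3.5} machinery --- while the paper's direct argument is self-contained and yields a more explicit modulus without appealing to \cite{ZN2007}.
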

\noindent\textbf{Proof.} 
Define $\Psi(x,y) := (g(x),y)$ for all $(x,y) \in \mathbb{X} \times \mathbb{Y}$. Then  
\[
\nabla \Psi(\bar{x}, \bar{y}) = \big(\nabla g(\bar{x}), \mathbf{I}_{\mathbb{Y}^*}\big),
\]
where $\mathbf{I}_{\mathbb{Y}^*}$ is the identity operator on $\mathbb{Y}^*$.  
It is straightforward to verify that $\nabla \Psi(\bar{x}, \bar{y})$ is surjective.  

Note that $\mathrm{gph}(F) = \Psi^{-1}(\mathrm{gph}(G))$. By virtue of \cref{lem3.2}, there exist $\ell, L, r \in (0, +\infty)$ such that
\begin{equation}\label{3.25}
\begin{aligned}
\widehat{\mathbf{N}}\big(\mathrm{gph}(F),(x,y)\big) 
\cap \ell\big(\mathbf{B}_{\mathbb{X}^*} \times \mathbf{B}_{\mathbb{Y}^*}\big) 
&\subseteq \nabla \Psi(x,y)^* \big( \mathbf{N}(\mathrm{gph}(G), \Psi(x,y)) \big) 
\cap \big( \mathbf{B}_{\mathbb{E}^*} \times \mathbf{B}_{\mathbb{Y}^*} \big) \\
&\subseteq \widehat{\mathbf{N}}\big(\mathrm{gph}(F),(x,y)\big) 
\cap L\big(\mathbf{B}_{\mathbb{X}^*} \times \mathbf{B}_{\mathbb{Y}^*}\big)
\end{aligned}
\end{equation}
for all $(x,y) \in \big(\mathbf{B}(\bar{x},r) \times \mathbf{B}(\bar{y},r)\big) \cap \mathrm{gph}(F)$.  

Moreover, there exist $\ell_1, L_1 \in (0, +\infty)$ such that
\begin{equation}\label{3.25a}
\widehat{\mathbf{N}}\big(F^{-1}(\bar{y}),u\big) 
\cap \ell_1 \mathbf{B}_{\mathbb{X}^*} 
\subseteq \nabla g(u)^* \big( \mathbf{N}(G^{-1}(\bar{y}), g(u)) \big) 
\cap \mathbf{B}_{\mathbb{E}^*}
\subseteq \widehat{\mathbf{N}}\big(F^{-1}(\bar{y}),u\big) 
\cap L_1 \mathbf{B}_{\mathbb{X}^*}
\end{equation}
for all $u \in \mathbf{B}(\bar{x},r) \cap F^{-1}(\bar{y})$ (shrinking $r>0$ if necessary).

%
\medskip	
We first show the equivalence of (i) and (ii). 

\medskip 	

\underline{(i) $\Rightarrow$ (ii)}:  
Suppose that $F$ is metrically subregular at $(\bar{x}, \bar{y})$.  
Then, by \cref{th3.2}, there exist $\tau, \delta > 0$ such that \eqref{3.14} holds for all  
$
x \in \mathbf{B}(\bar{x}, \delta) \cap F^{-1}(\bar{y}).$  
Fix $x \in \mathbf{B}(\bar{x}, \delta) \cap F^{-1}(\bar{y})$ and take  
$
x^* \in \widehat{\mathbf{N}}(F^{-1}(\bar{y}), x) \cap \mathbf{B}_{\mathbb{X}^*}. $
By \eqref{3.14}, for each $n \in \mathbb{N}$, there exist  
\[
(x_n, y_n) \xrightarrow{{\rm gph}(F)}(x, \bar{y}), 
\quad (x_n^*, y_n^*) \in \mathbb{X}^* \times \Big(1+\tfrac{1}{n}\Big)\mathbf{B}_{\mathbb{Y}^*}, 
\quad b_n^* \in \mathbf{B}_{\mathbb{Y}^*},
\]  
such that $(x_n^*, y_n^*) \in \widehat{\mathbf{N}}({\rm gph}(F), (x_n, y_n))$ and  
\begin{equation}\label{3.26}
    x^* = \tau x_n^* + \tfrac{1}{n} b_n^*.
\end{equation}
Note that $\mathrm{gph}(F) = \Psi^{-1}(\mathrm{gph}(G))$.  
It then follows from \cite[Corollary~1.15]{Mordukhovich} that  
\[
 \widehat{\mathbf{N}}(\mathrm{gph}(F), (x_n, y_n)) 
= \nabla \Psi(x_n, y_n)^* \big( \mathbf{N}(\mathrm{gph}(G), (g(x_n), y_n)) \big).
\]  
Thus, there exists $(e_n^*, w_n^*) \in \mathbf{N}(\mathrm{gph}(G), (g(x_n), y_n))$ such that  
\begin{equation}\label{3.27}
    (x_n^*, y_n^*) 
    = \nabla \Psi(x_n, y_n)^*(e_n^*, w_n^*) 
    = \big( \nabla g(x_n)^*(e_n^*), \, w_n^* \big).
\end{equation}

From \eqref{3.26}, there exists $M>0$ such that $\|(x_n^*, y_n^*)\| \le M$ for all $n$.  
Then, by \eqref{3.25}, there exists  
\[
(\tilde{e}_n^*,\tilde{w}_n^*) \in \mathbf{N}\big(\mathrm{gph}(G),(g(x_n),y_n)\big) 
\cap \big(\mathbf{B}_{\mathbb{E}^*} \times \mathbf{B}_{\mathbb{Y}^*}\big)
\]
such that
\[
\frac{\ell}{M}(x_n^*, y_n^*) 
= \nabla\Psi(x_n,y_n)^*(\tilde{e}_n^*,\tilde{w}_n^*) 
= \big(\nabla g(x_n)^*(\tilde{e}_n^*), \tilde{w}_n^*\big).
\]
Combining this with \eqref{3.27} yields
\[
y_n^* = w_n^* = \frac{M}{\ell} \tilde{w}_n^*, 
\quad 
x_n^* = \nabla g(x_n)^*(e_n^*) 
= \nabla g(x_n)^*\!\left(\frac{M}{\ell} \tilde{e}_n^*\right),
\]
and hence
\[
\|e_n^*\| 
= \left\| \frac{M}{\ell} \tilde{e}_n^* \right\| 
\le \frac{M}{\ell},
\]
since $\nabla g(x_n)^*$ is one-to-one.

Because $\mathbb{X}$ and $\mathbb{Y}$ are Asplund spaces, without loss of generality we may assume that  
\begin{equation}\label{3.28}
(e_n^*,w_n^*) \xrightarrow{w^*} (e^*,w^*) \in \mathbb{X}^* \times \mathbb{Y}^*
\end{equation}
(possibly after passing to a subsequence).  
Note that $\|w_n^*\| = \|y_n^*\| \le 1+\frac{1}{n}$, hence
\[
\|w^*\| \le \limsup_{n \to \infty} \|w_n^*\| \le 1,
\]
which means $w^* \in \mathbf{B}_{\mathbb{Y}^*}$.  

Since $(e_n^*,w_n^*) \in \mathbf{N}\big(\mathrm{gph}(G),(g(x_n),y_n)\big)$ and $\mathrm{gph}(G)$ is convex, we have
\[
(e^*,w^*) \in \mathbf{N}\big(\mathrm{gph}(G),(g(x),\bar{y})\big),
\]
because $ {(x_n,y_n)\xrightarrow{\mathrm{gph}(F)} (x,\bar{y})}$.  
Together with \eqref{3.27} and \eqref{3.28}, this implies
\[
(x_n^*, y_n^*) \xrightarrow{w^*} \big(\nabla g(x)^*(e^*), w^*\big) 
= \nabla\Psi(x,y)^*(e^*,w^*) 
\in \nabla\Psi(x,y)^*\big( \mathbf{N}(\mathrm{gph}(G), \Psi(x,\bar{y})) \big).
\]
Using \cite[Corollary~1.15]{Mordukhovich} again, we conclude that
\[
\big(\nabla g(x)^*(e^*), w^*\big) \in \widehat{\mathbf{N}}\big(\mathrm{gph}(F), (x,\bar{y})\big).
\]
Finally, taking weak$^*$ limits in \eqref{3.26} as $n \to \infty$ yields
\[
x^* = \tau \nabla g(x)^*(e^*) 
\in \tau\, \widehat{D}^*F(x,\bar{y})\big( \mathbf{B}_{\mathbb{Y}^*} \big),
\]
since $\|w^*\| \le 1$.  
Hence, \eqref{3.23} holds.

\medskip

 \underline{(ii) $\,\Rightarrow\,$ (i)}:  
Suppose there exist $\tau,\delta \in (0,+\infty)$ such that \eqref{3.23} holds.  
Let $\epsilon > 0$ be such that $(\tau+1)\epsilon < \ell$.  
Since $g$ is continuously differentiable, there exists $\delta_1 > 0$ with  
$\delta_1 < \min\{\delta,r\}$ such that
\begin{equation}\label{3.29}
\|g(x) - g(u) - \nabla g(u)(x-u)\| < \epsilon \|x-u\|,  
\quad \forall x,u \in \mathbf{B}(\bar{x},\delta_1).
\end{equation}
Set $r_1 := \delta_1/2$ and take any  
$x \in \mathbf{B}(\bar{x},r_1) \setminus F^{-1}(\bar{y})$.  
Then
\[
\mathbf{d}\big(x, F^{-1}(\bar{y})\big) \le \|x-\bar{x}\| < r_1.
\]
Choose $\beta \in (0,1)$ such that
\begin{equation}\label{3.30}
\beta > \max\left\{
\frac{\mathbf{d}(x,F^{-1}(\bar{y}))}{r_1}, \;
\frac{(\tau+1)\epsilon}{\ell}
\right\}.
\end{equation}
By \cref{lem2.3}, there exist  
$u \in F^{-1}(\bar{y})$ and $u^* \in \widehat{\mathbf{N}}\big(F^{-1}(\bar{y}),u\big)$  
with $\|u^*\| = 1$ such that
\begin{equation}\label{3.31}
\beta\|x-u\| \le 
\min\left\{ \mathbf{d}\big(x, F^{-1}(\bar{y})\big), \;
\langle u^*, x-u\rangle \right\}.
\end{equation}
It follows that
\[
\|u - \bar{x}\| \le \|u-x\| + \|x-\bar{x}\| 
< \frac{\mathbf{d}(x,F^{-1}(\bar{y}))}{\beta} + r_1
< 2r_1 = \delta_1.
\]
Hence, by \eqref{3.23}, there exists $b^* \in \mathbf{B}_{\mathbb{Y}^*}$ such that
\[
\left( \frac{u^*}{\tau}, b^* \right) 
\in \widehat{\mathbf{N}}\big(\mathrm{gph}(F),(u,\bar{y})\big).
\]
From \eqref{3.25}, there exists  
$(e^*,y^*) \in \mathbf{N}\big(\mathrm{gph}(G),(g(u),\bar{y})\big)  
\cap \big( \mathbf{B}_{\mathbb{E}^*} \times \mathbf{B}_{\mathbb{Y}^*} \big)$  
such that
\begin{equation}\label{3.32}
\frac{\tau\ell}{\tau+1} \left( \frac{u^*}{\tau}, b^* \right)  
= \nabla\Psi(u,\bar{y})^*(e^*,y^*)  
= \big( \nabla g(u)^*(e^*), y^* \big).
\end{equation}

For any $y \in F(x)$, we have $(g(x), y) \in \mathrm{gph}(G)$, and thus
\[
\langle (e^*,y^*), \, (g(x)-g(u),\, y-\bar{y}) \rangle \le 0.
\]
Combining this with \eqref{3.29} and \eqref{3.32}, we obtain
\begin{align*}
\left\langle \frac{\ell u^*}{\tau+1}, x-u \right\rangle
&= \langle \nabla g(u)^*(e^*), x-u \rangle \\
&= \langle e^*, \nabla g(u)(x-u) \rangle \\
&= \langle e^*, g(u) - g(x) + \nabla g(u)(x-u) \rangle 
  + \langle e^*, g(x) - g(u) \rangle \\
&\le \|g(u) - g(x) + \nabla g(u)(x-u)\| 
  + \langle -y^*, y - \bar{y} \rangle \\
&< \epsilon \|x-u\| + \|y - \bar{y}\|.
\end{align*}
Therefore,
\[
\left\langle \frac{\ell u^*}{\tau+1}, x-u \right\rangle
< \epsilon \|x-u\| + \mathbf{d}(\bar{y}, F(x)).
\]
By \eqref{3.31}, this implies
\[
\beta\|x-u\| 
\le \langle u^*, x-u \rangle 
< \frac{(\tau+1)\epsilon}{\ell} \|x-u\|  
+ \frac{\tau+1}{\ell} \mathbf{d}(\bar{y},F(x)).
\]
Hence,
\[
\left( \beta - \frac{(\tau+1)\epsilon}{\ell} \right) 
\mathbf{d}\big(x, F^{-1}(\bar{y})\big) 
< \frac{\tau+1}{\ell} \mathbf{d}(\bar{y},F(x)).
\]
Taking the limit as $\beta \uparrow 1$, we obtain
\[
\mathbf{d}\big(x, F^{-1}(\bar{y})\big) 
\le \frac{\tau+1}{\ell - (\tau+1)\epsilon} \, \mathbf{d}(\bar{y},F(x)).
\]
Thus $F$ is metrically subregular at $(\bar{x},\bar{y})$ with constant  
$\frac{\tau+1}{\ell - (\tau+1)\epsilon} > 0$.

For the equivalence of (ii) and (iii), we claim that
\begin{equation}\label{3.34}
\widehat{\mathbf{N}}\big(\mathrm{gph}(F),(x,y)\big) 
= \mathbf{N}\big(\mathrm{gph}(F),(x,y)\big),  
\quad \forall (x,y) \in 
\big( \mathbf{B}(\bar{x},r) \times \mathbf{B}(\bar{y},r) \big)  
\cap \mathrm{gph}(F),
\end{equation}
and
\begin{equation}\label{3.35}
\widehat{\mathbf{N}}\big(F^{-1}(\bar{y}),u\big) 
= \mathbf{N}\big(F^{-1}(\bar{y}),u\big),  
\quad \forall u \in \mathbf{B}(\bar{x},r) \cap F^{-1}(\bar{y}).
\end{equation}

Let $(x,y) \in \left( \mathbf{B}(\bar x,r) \times \mathbf{B}(\bar y,r) \right) \cap {\rm gph}(F)$ 
and $(x^*,y^*) \in \mathbf{N}({\rm gph}(F),(x,y))$.  
Then there exist sequences $  {(x_k,y_k)\xrightarrow{{\rm gph}(F)} (x,y)}$ 
and $(x_k^*,y_k^*) \stackrel{w^*}{\longrightarrow} (x^*,y^*)$ such that
\[
(x_k^*,y_k^*) \in \widehat{\mathbf{N}}({\rm gph}(F),(x_k,y_k)), 
\quad \forall k.
\]
By the Banach--Steinhaus theorem, we may assume that 
$\|(x_k^*,y_k^*)\| \leq M$ for some $M>0$ and all $k$.  
Hence
\[
\frac{\ell}{M}(x_k^*,y_k^*) 
   \in \widehat{\mathbf{N}}({\rm gph}(F),(x_k,y_k))
   \cap \ell\big( \mathbf{B}_{\mathbb{X}^*} \times \mathbf{B}_{\mathbb{Y}^*} \big).
\]
By virtue of \eqref{3.25}, for all $k$ sufficiently large, there exists 
\[
(e_k^*,v_k^*) \in 
\mathbf{N}({\rm gph}(G),(g(x_k),y_k)) 
\cap (\mathbf{B}_{\mathbb{E}^*} \times \mathbf{B}_{\mathbb{Y}^*})
\]
such that
\begin{equation}\label{3.36}
  \frac{\ell}{M}(x_k^*,y_k^*) 
    = \nabla\Psi(x_k,y_k)^*(e_k^*,v_k^*).
\end{equation}
Since $\mathbb{E}$ and $\mathbb{Y}$ are Asplund spaces, we may assume, 
without loss of generality, that 
\[
(e_k^*,v_k^*) \stackrel{w^*}{\longrightarrow} (e^*,v^*) 
\in \mathbf{B}_{\mathbb{E}^*} \times \mathbf{B}_{\mathbb{Y}^*}
\]
(possibly after passing to a subsequence).  
Because ${\rm gph}(G)$ is convex, one has 
\[
(e^*,v^*) \in \mathbf{N}({\rm gph}(G),(g(x),y)).
\]
Noting that $\nabla\Psi(x,y)^*$ is weak$^*$--weak$^*$ continuous 
and $\nabla\Psi$ is continuous, it follows that
\[
\nabla\Psi(x_k,y_k)^*(e_k^*,v_k^*) 
   \stackrel{w^*}{\longrightarrow} 
\nabla\Psi(x,y)^*(e^*,v^*).
\]
Taking the limit in \eqref{3.36} as $k \to \infty$ in the weak$^*$ topology yields
\[
\begin{aligned}
\frac{\ell}{M}(x^*,y^*) 
   &= \nabla\Psi(x,y)^*(e^*,v^*) \\
   &\in \nabla\Psi(x,y)^*\big( \mathbf{N}({\rm gph}(G),(g(x),y)) \big) \\
   &= \widehat{\mathbf{N}}({\rm gph}(F),(x,y)),
\end{aligned}
\]
where the last equality follows from \cite[Corollary~1.15]{Mordukhovich}.  
Thus $(x^*,y^*) \in \widehat{\mathbf{N}}({\rm gph}(F),(x,y))$, 
and hence \eqref{3.34} holds.

\medskip
For \eqref{3.35}, let $u \in \mathbf{B}(\bar x,r) \cap F^{-1}(\bar y)$ 
and $u^* \in \mathbf{N}(F^{-1}(\bar y),u)$.  
Then there exist $u_k \to u$ with $u_k \in F^{-1}(\bar y)$ 
and $u_k^* \stackrel{w^*}{\longrightarrow} u^*$ such that
\[
u_k^* \in \widehat{\mathbf{N}}(F^{-1}(\bar y),u_k), 
\quad \forall k.
\]
By the Banach--Steinhaus theorem, we may assume $\|u_k^*\| \leq M_1$ 
for some $M_1>0$ and all $k$.  
Hence
\[
\frac{\ell_1}{M_1}u_k^* 
   \in \widehat{\mathbf{N}}(F^{-1}(\bar y),u_k) 
   \cap \ell_1\mathbf{B}_{\mathbb{X}^*}.
\]
By \eqref{3.25a}, for $k$ sufficiently large, there exists 
\[
w_k^* \in \mathbf{N}(G^{-1}(\bar y), g(u_k)) 
        \cap \mathbf{B}_{\mathbb{E}^*}
\]
such that
\begin{equation}\label{3.37}
  \frac{\ell_1}{M_1}u_k^* 
     = \nabla g(u_k)^*(w_k^*).
\end{equation}
Since $\mathbb{E}$ is Asplund, we may assume (after possibly taking a subsequence) that
\[
w_k^* \stackrel{w^*}{\longrightarrow} w^* \in \mathbf{B}_{\mathbb{E}^*}.
\]
As $G^{-1}(\bar y)$ is convex, we have 
\[
w^* \in \mathbf{N}(G^{-1}(\bar y), g(u)).
\]
Since $\nabla g(u)^*$ is weak$^*$--weak$^*$ continuous 
and $\nabla g$ is continuous, it follows that
\[
\begin{aligned}
\frac{\ell_1}{M_1}u^* 
   &= \nabla g(u)^*(w^*) \\
   &\in \nabla g(u)^*\big( \mathbf{N}(G^{-1}(\bar y), g(u)) \big) \\
   &= \widehat{\mathbf{N}}(F^{-1}(\bar y),u),
\end{aligned}
\]
where the last equality follows from \cite[Corollary 1.15]{Mordukhovich}).  
Thus $u^* \in \widehat{\mathbf{N}}(F^{-1}(\bar y),u)$, and \eqref{3.35} holds.  
The proof is complete. \hfill$\blacksquare$ 

\medskip

\begin{theorem}\label{th3.5}
Let $\mathbb{E}$ be an Asplund space, 
$g:\mathbb{X} \to \mathbb{E}$ a continuously differentiable mapping, 
and $G \in \Gamma(\mathbb{E},\mathbb{Y})$ a convex multifunction.  
Let $F := G \circ g$ and $\bar x \in \mathbb{X}$ be such that 
$\nabla g(\bar x)$ is surjective and $\bar y \in F(\bar x)$.  
Then $F$ is metrically subregular at $(\bar x,\bar y)$ 
if and only if $G$ is metrically subregular at $(g(\bar x),\bar y)$.
\end{theorem}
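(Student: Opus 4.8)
The plan is to obtain both implications directly from the definition of metric subregularity, the only nontrivial input being the Lyusternik--Graves theorem applied to $g$. Since $\nabla g(\bar x)$ is surjective it is open, so $g$ is metrically regular at $\bar x$: I would fix $\kappa,L,a>0$ such that $g$ is $L$-Lipschitz on $\mathbf{B}(\bar x,a)$ (it is continuously differentiable) and
\[
\mathbf{d}\big(x,g^{-1}(e)\big)\le \kappa\,\|g(x)-e\|\qquad\text{for all } x\in\mathbf{B}(\bar x,a),\ e\in\mathbf{B}(g(\bar x),a).
\]
I would then record the elementary facts that $S:=G^{-1}(\bar y)$ is closed (since ${\rm gph}(G)$ is), that $F^{-1}(\bar y)=g^{-1}(S)$ and $\mathbf{d}(\bar y,F(x))=\mathbf{d}(\bar y,G(g(x)))$, and that $g(\bar x)\in S$ (because $\bar y\in F(\bar x)=G(g(\bar x))$), so that $\mathbf{d}(g(x),S)$ and $\mathbf{d}(x,g^{-1}(S))$ are small whenever $x$ is close to $\bar x$.

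For the implication ``$G$ metrically subregular at $(g(\bar x),\bar y)$'' $\Rightarrow$ ``$F$ metrically subregular at $(\bar x,\bar y)$'', I would take $\tau,\delta>0$ with $\mathbf{d}(e,S)\le\tau\,\mathbf{d}(\bar y,G(e))$ for $e\in\mathbf{B}(g(\bar x),\delta)$. For $x$ close enough to $\bar x$, any $e\in S$ with $\|g(x)-e\|$ close to $\mathbf{d}(g(x),S)$ lies in $\mathbf{B}(g(\bar x),a)$; since $g^{-1}(e)\subseteq g^{-1}(S)$, the metric regularity estimate gives $\mathbf{d}(x,g^{-1}(S))\le\kappa\,\|g(x)-e\|$, and letting $e$ vary, $\mathbf{d}(x,g^{-1}(S))\le\kappa\,\mathbf{d}(g(x),S)$. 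Combining this with the subregularity of $G$ (valid once $g(x)\in\mathbf{B}(g(\bar x),\delta)$),
\[
\mathbf{d}\big(x,F^{-1}(\bar y)\big)=\mathbf{d}\big(x,g^{-1}(S)\big)\le\kappa\,\mathbf{d}(g(x),S)\le\kappa\tau\,\mathbf{d}(\bar y,G(g(x)))=\kappa\tau\,\mathbf{d}(\bar y,F(x)).
\]

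For the converse I would take $\tau_F,r>0$ with $\mathbf{d}(x,F^{-1}(\bar y))\le\tau_F\,\mathbf{d}(\bar y,F(x))$ for $x\in\mathbf{B}(\bar x,r)$. Given $e$ close to $g(\bar x)$, the metric regularity estimate applied to $(\bar x,e)$ supplies $x_e\in g^{-1}(e)$ with $\|x_e-\bar x\|\le\kappa\|e-g(\bar x)\|+\varepsilon$, which I keep below $\min\{r,a\}$ (in particular $g$ is onto near $g(\bar x)$). Since $g$ is $L$-Lipschitz near $\bar x$ and a near-minimizer of $\mathbf{d}(x_e,g^{-1}(S))$ stays in $\mathbf{B}(\bar x,a)$, one gets $\mathbf{d}(g(x_e),S)\le L\,\mathbf{d}(x_e,g^{-1}(S))$, hence
\[
\mathbf{d}(e,S)=\mathbf{d}(g(x_e),S)\le L\,\mathbf{d}\big(x_e,F^{-1}(\bar y)\big)\le L\tau_F\,\mathbf{d}(\bar y,F(x_e))=L\tau_F\,\mathbf{d}(\bar y,G(e)),
\]
which is the metric subregularity of $G$ at $(g(\bar x),\bar y)$.

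The hard part will be the neighbourhood bookkeeping: I must verify that each auxiliary point introduced --- the near-minimizers in the two distance infima and the fibre points $x_e$ --- lies in the ball on which the Lyusternik--Graves estimate and the Lipschitz bound for $g$ hold, and that the slack $\varepsilon$ can be driven to $0$. A more ``dual'' alternative would be to compare condition~(iii) of \cref{th3.4} for $F$ with the same condition for $G=G\circ\mathrm{Id}$, using \cref{lem3.2} and \cite[Corollary~1.15]{Mordukhovich}; the metric argument above is, however, shorter and does not even use convexity of ${\rm gph}(G)$.
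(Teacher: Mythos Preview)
Your argument is correct, and as you yourself note at the end, it is genuinely different from the paper's. The paper proves \cref{th3.5} by passing through the dual characterization \cref{th3.4}: it shows, using \cref{lem3.2} and \cite[Corollary~1.15]{Mordukhovich}, that condition~\eqref{3.23} for $F$ on a neighbourhood of $\bar x$ is equivalent to the analogous coderivative condition~\eqref{3.38} for $G$ on a neighbourhood of $g(\bar x)$, translating back and forth via $\nabla g(x)^*$ (and invoking metric regularity of $g$ to produce fibre points $x\in g^{-1}(e)$ close to $\bar x$). Your route stays entirely on the primal side: the two displayed chains of inequalities are exactly $\mathbf{d}(x,g^{-1}(S))\le\kappa\,\mathbf{d}(g(x),S)$ and $\mathbf{d}(g(x_e),S)\le L\,\mathbf{d}(x_e,g^{-1}(S))$, both immediate from Lyusternik--Graves and local Lipschitzness of $g$. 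The neighbourhood bookkeeping you flag is routine (each auxiliary point is within $2\|x-\bar x\|+\varepsilon$ or $2\kappa\|e-g(\bar x)\|+\varepsilon$ of $\bar x$, so shrinking the outer radius suffices). What your approach buys is generality: it uses neither the Asplund property of $\mathbb{X},\mathbb{Y},\mathbb{E}$ nor the convexity of ${\rm gph}(G)$, whereas the paper's proof needs both because it goes through \cref{th3.4}. What the paper's approach buys is a demonstration that the dual machinery of \cref{th3.4} and \cref{lem3.2} is sharp enough to recover the equivalence --- a consistency check rather than the shortest path to the result.
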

\begin{proof}
Let $\Psi(x,y) := (g(x), y)$ for any $(x,y) \in \mathbb{X} \times \mathbb{Y}$. By \cref{lem3.2}, there exist constants $\ell, L, \ell_1, L_1, r \in (0, +\infty)$ such that
\eqref{3.25} holds for all $(x,y) \in (\mathbf{B}(\bar x,r) \times \mathbf{B}(\bar y,r)) \cap {\rm gph}(F)$ and \eqref{3.25a} holds for all $u \in \mathbf{B}(\bar x,r) \cap F^{-1}(\bar y)$ (possibly taking a smaller $r>0$ if necessary).

Based on \cref{th3.4}, it suffices to prove that there exist $\tau, \delta > 0$ such that \eqref{3.23} holds for all $x \in \mathbf{B}(\bar x,\delta) \cap F^{-1}(\bar y)$ if and only if there exist $\tau_1, \delta_1 > 0$ such that
\begin{equation}\label{3.38}
\mathbf{N}(G^{-1}(\bar y), e) \cap \mathbf{B}_{\mathbb{E}^*} \subseteq \tau_1 D^*G(e,\bar y)(\mathbf{B}_{\mathbb{Y}^*}), \quad \forall e \in \mathbf{B}(g(\bar x), \delta_1) \cap G^{-1}(\bar y).
\end{equation}

Suppose that there exist $\tau, \delta > 0$ such that \eqref{3.23} holds for all $x \in \mathbf{B}(\bar x, \delta) \cap F^{-1}(\bar y)$. Without loss of generality, assume $\delta < r$ (possibly taking a smaller $\delta$ if necessary). Since $\nabla g(\bar x)$ is surjective, it follows from \cite[Theorem 1.57]{Mordukhovich} that $g$ is metrically regular at $\bar x$, and thus there exist $\kappa, r_0 > 0$ such that
\begin{equation}\label{3.39}
\mathbf{d}(x, g^{-1}(e)) \leq \kappa \|e - g(x)\|, \quad \forall (x,e) \in \mathbf{B}(\bar x,r_0) \times \mathbf{B}(g(\bar x), r_0).
\end{equation}

Take $\delta_1 > 0$ such that $\kappa \delta_1 < \min\{\delta, r_0\}$. Let $e \in \mathbf{B}(g(\bar x), \delta_1) \cap G^{-1}(\bar y)$. Then
\[
\mathbf{d}(\bar x, g^{-1}(e)) \leq \kappa \|e - g(\bar x)\| < \kappa \delta_1 < \delta,
\]
so there exists $x \in g^{-1}(e)$ such that $x \in \mathbf{B}(\bar x, \delta) \cap F^{-1}(\bar y)$. By \eqref{3.25a}, we have
\[
\nabla g(x)^*(\mathbf{N}(G^{-1}(\bar y), e) \cap \mathbf{B}_{\mathbb{E}^*}) = \nabla g(x)^*(\mathbf{N}(G^{-1}(\bar y), g(x)) \cap \mathbf{B}_{\mathbb{E}^*}) \subseteq \widehat{\mathbf{N}}(F^{-1}(\bar y), x) \cap L_1 \mathbf{B}_{\mathbb{X}^*}.
\]

Let $e^* \in \mathbf{N}(G^{-1}(\bar y), e) \cap \mathbf{B}_{\mathbb{E}^*}$. Then \eqref{3.23} implies there exists $y^* \in \mathbf{B}_{\mathbb{Y}^*}$ such that 
\[
\Big(\frac{1}{L_1 \tau} \nabla g(x)^*(e^*), y^*\Big) \in \widehat{\mathbf{N}}({\rm gph}(F), (x, \bar y)).
\]
By \eqref{3.25}, there exists $(e_1^*, v_1^*) \in \mathbf{N}({\rm gph}(G), (g(x), \bar y)) \cap (\mathbf{B}_{\mathbb{E}^*} \times \mathbf{B}_{\mathbb{Y}^*})$ such that
\[
\frac{\tau \ell}{\tau + 1} \Big(\frac{1}{L_1 \tau} \nabla g(x)^*(e^*), y^*\Big) = \nabla \Psi(x, \bar y)(e_1^*, v_1^*) = (\nabla g(x)^*(e_1^*), v_1^*).
\]
This implies
\[
\nabla g(x)^*\Big(\frac{\ell}{(\tau + 1)L_1} e^*\Big) = \nabla g(x)^*(e_1^*) \quad \text{and} \quad \frac{\tau \ell}{\tau + 1} y^* = v_1^*.
\]
Since $\nabla g(x)^*$ is injective, it follows that
\[
\frac{\ell}{(\tau + 1)L_1} e^* = e_1^*, \quad \text{hence} \quad e^* = \frac{(\tau + 1)L_1}{\ell} e_1^* \in \frac{(\tau + 1)L_1}{\ell} D^*G(e, \bar y)(\mathbf{B}_{\mathbb{Y}^*}).
\]
Thus \eqref{3.38} holds with $\tau_1 := \frac{(\tau + 1)L_1}{\ell} > 0$.

Conversely, suppose that there exist $\tau_1, \delta_1 > 0$ such that \eqref{3.38} holds. Take $\delta \in (0, r)$ such that
\begin{equation}\label{3.40}
\|g(x) - g(\bar x)\| < \delta_1, \quad \forall x \in \mathbf{B}(\bar x, \delta).
\end{equation}
Let $x \in \mathbf{B}(\bar x, \delta) \cap F^{-1}(\bar y)$ and $x^* \in \widehat{\mathbf{N}}(F^{-1}(\bar y), x) \cap \mathbf{B}_{\mathbb{X}^*}$. Then $g(x) \in G^{-1}(\bar y) \cap \mathbf{B}(g(\bar x), \delta_1)$. By \eqref{3.25a}, there exists $e_1^* \in \mathbf{N}(G^{-1}(\bar y), g(x)) \cap \mathbf{B}_{\mathbb{E}^*}$ such that 
\begin{equation}\label{3.41}
\ell_1 x^* = \nabla g(x)^*(e_1^*).
\end{equation}
By \eqref{3.38}, there exists $y_1^* \in \mathbf{B}_{\mathbb{Y}^*}$ such that
\[
\Big(\frac{1}{\tau_1} e_1^*, y_1^*\Big) \in \mathbf{N}({\rm gph}(G), (g(x), \bar y)).
\]
It follows from \cite[Corollary 1.15]{Mordukhovich} that 
\[
\nabla \Psi(x, \bar y)^*\Big(\frac{1}{\tau_1} e_1^*, y_1^*\Big) \in \nabla \Psi(x, \bar y)^*(\mathbf{N}({\rm gph}(G), (g(x), \bar y))) = \widehat{\mathbf{N}}({\rm gph}(F), (x, \bar y)).
\]
Together with \eqref{3.41}, this implies
\[
\Big(\frac{\ell_1}{\tau_1} x^*, y_1^*\Big) = \nabla \Psi(x, \bar y)^*\Big(\frac{1}{\tau_1} e_1^*, y_1^*\Big) \in \widehat{\mathbf{N}}({\rm gph}(F), (x, \bar y)),
\]
and consequently
\[
x^* \in \frac{\tau_1}{\ell_1} \widehat D^*F(x, \bar y)(-y_1^*) \subseteq \frac{\tau_1}{\ell_1} \widehat D^*F(x, \bar y)(\mathbf{B}_{\mathbb{Y}^*}).
\]
Hence \eqref{3.23} holds with $\tau := \frac{\tau_1}{\ell_1} > 0$. The proof is complete. \end{proof}


\section{Applications to Metric Subregularity of Conic Inequalities}

As an application of the results obtained in Section 3, we study the metric subregularity of  {the conic inequality} defined by a proper vector-valued function between two Banach spaces and a closed cone with a nontrivial recession cone. We begin by formulating such a conic inequality.

Let $\mathbb{X}$ and $\mathbb{Y}$ be Banach spaces, and let $K \subseteq \mathbb{Y}$ be a nonempty closed cone with a nontrivial recession cone; that is,
\[
K^{\infty} \neq \{0\}.
\]
The cone $K$ induces a preorder $\leq_K$ on $\mathbb{Y}$ via
\begin{equation}\label{4.1}
  y_1 \leq_K y_2 
  \quad \Longleftrightarrow \quad
  y_2 - y_1 \in K.
\end{equation}

We denote by $\infty_{\mathbb{Y}}$ the abstract infinity.  
Let $\Lambda_K(\mathbb{X}, \mathbb{Y})$ be the set of all proper vector-valued functions 
\[
\varphi : \mathbb{X} \to \mathbb{Y} \cup \{\infty_{\mathbb{Y}}\}
\]
such that
\begin{equation}\label{4.2}
  {\rm epi}_K(\varphi) := 
  \big\{ (x, y) \in \mathbb{X} \times \mathbb{Y} : \varphi(x) \leq_K y \big\}
\end{equation}
is closed in the product space $\mathbb{X} \times \mathbb{Y}$.

Let $f \in \Lambda_K(\mathbb{X}, \mathbb{Y})$.  
Consider the following conic inequality:
\begin{equation}\label{4.3}
  f(x) \leq_K 0.
\end{equation}
We denote the solution set of \eqref{4.3} by
\[
\mathbf{S}_K(f) := \{ x \in \mathbb{X} : f(x) \leq_K 0 \}.
\]
Conic inequalities can encompass a wide variety of constraint systems in optimization.  
For example, let $\mathbb{Y} := \mathbb{R}^{m+n}$ and $K := \mathbb{R}^m_+ \times \{ 0_{\mathbb{R}^n} \}$.  
Then the conic inequality $f(x) \leq_K 0$ reduces to the usual finite system of inequalities and equalities.  
If $\mathbb{Y}$ is the space of all continuous real-valued functions defined on a compact topological space $T$,  
and $K$ is the cone of all nonnegative continuous functions on $T$,  
then such a conic inequality reduces to the constraint system of a semi-infinite optimization problem.

\medskip

Recall from \cite{Zheng2022-MOR, ZN2019-ESAIM} that $f$ is said to be \emph{metrically subregular} at  
$\bar{x} \in \mathbf{S}_K(f)$ if there exist $\tau, \delta \in (0, +\infty)$ such that
\begin{equation}\label{4.4}
  \mathbf{d}(x, \mathbf{S}_K(f)) \leq \tau \, \mathbf{d}\big(f(x), -K\big),
  \quad \forall x \in \mathbf{B}(\bar{x}, \delta).
\end{equation}

To study the metric subregularity of the vector-valued function $f$,  
we consider the following  {two types of subdifferential of $f$ with respect to $K$}.

\medskip

Let
\[
K^{\infty,+} := \big\{ y^* \in \mathbb{Y}^* : \langle y^*, y \rangle \geq 0, \ \forall y \in K^{\infty} \big\},
\]
and interpret $\langle y^*, \infty_{\mathbb{Y}} \rangle$ as $+\infty$ for all $y^* \in K^{\infty,+}$.

\medskip

Let $u \in {\rm dom}(f)$.  
We denote by $\widehat{\partial}_K f(u)$ and $\partial_K f(u)$ the Fr\'echet and limiting subdifferentials of $f$ at $u$ with respect to $K$, respectively, defined as
\begin{equation*}\label{4.5}
  \widehat{\partial}_K f(u)
  := \big\{ x^* \in \mathbb{X}^* :
     \widehat{\mathbf{N}}\big({\rm epi}_K(f), (u, f(u))\big)
     \cap \big( \{x^*\} \times \big(-K^{\infty,+} \cap \mathbf{S}_{\mathbb{Y}^*} \big) \big)
     \neq \emptyset \big\},
\end{equation*}
\begin{equation*}\label{4.5a}
  \partial_K f(u)
  := \big\{ x^* \in \mathbb{X}^* :
     \mathbf{N}\big({\rm epi}_K(f), (u, f(u))\big)
     \cap \big( \{x^*\} \times \big(-K^{\infty,+} \cap \mathbf{S}_{\mathbb{Y}^*} \big) \big)
     \neq \emptyset \big\}.
\end{equation*}

Similarly, we define the Fr\'echet and limiting \emph{singular} subdifferentials of $f$ at $u$ with respect to $K$ as
\begin{equation*}\label{4.6}
  \widehat{\partial}^{\infty}_K f(u)
  := \big\{ x^* \in \mathbb{X}^* :
     (x^*, 0) \in \widehat{\mathbf{N}}\big({\rm epi}_K(f), (u, f(u))\big) \big\},
\end{equation*}
\begin{equation*}\label{4.6a}
  \partial^{\infty}_K f(u)
  := \big\{ x^* \in \mathbb{X}^* :
     (x^*, 0) \in \mathbf{N}\big({\rm epi}_K(f), (u, f(u))\big) \big\}.
\end{equation*}

It is easy to verify that $\widehat{\partial}^{\infty}_K f(u)$ and $\partial^{\infty}_K f(u)$ are cones.  
Moreover, if $f$ is locally Lipschitz around $u$, then
\[
\widehat{\partial}^{\infty}_K f(u)
= \partial^{\infty}_K f(u)
= \{0\}.
\]
\begin{lemma}\label{lem4.1}
Let $f \in \Lambda_K(\mathbb{X}, \mathbb{Y})$ and define $F(x) := f(x) + K$ for all $x \in \mathbb{X}$.  
Then, for any $x \in {\rm dom}(f)$, one has ${\rm dom}\big(D^*F(x, f(x))\big) \subseteq K^{\infty,+}$,
\begin{equation}\label{4.5-250712}
D^*F(x, f(x))(\mathbb{Y}^*) \subseteq \mathbb{R}_+ \, \partial_K f(x) + \partial_K^{\infty} f(x),
\end{equation}
and
\begin{equation}\label{4.6-250712}
\widehat{D}^*F(x, f(x))(\mathbb{Y}^*) \subseteq \mathbb{R}_+ \, \widehat{\partial}_K f(x) + \widehat{\partial}_K^{\infty} f(x).
\end{equation}
\end{lemma}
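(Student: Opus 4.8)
The plan is to reduce the whole statement to the identity ${\rm gph}(F)={\rm epi}_K(f)$ together with the invariance of ${\rm epi}_K(f)$ under shifts of the $\mathbb{Y}$-coordinate by the recession cone $K^{\infty}$. Since $K$ is a (nonempty) cone we have $0\in K$, so $f(x)\le_K y\iff y\in f(x)+K$ shows ${\rm gph}(F)={\rm epi}_K(f)$; in particular ${\rm gph}(F)$ is closed and $(x,f(x))\in{\rm gph}(F)$ for every $x\in{\rm dom}(f)$. The crucial elementary observation is that if $(u,v)\in{\rm epi}_K(f)$ and $w\in K^{\infty}$, then $(u,v+tw)\in{\rm epi}_K(f)$ for every $t\ge 0$, because $(v+tw)-f(u)=(v-f(u))+tw\in K$ by the very definition of $K^{\infty}$.

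First I would establish the two domain inclusions. Let $x^*\in\widehat{D}^*F(x,f(x))(y^*)$, i.e. $(x^*,-y^*)\in\widehat{\mathbf{N}}({\rm epi}_K(f),(x,f(x)))$; testing the defining $\limsup$ along the arc $t\mapsto(x,f(x)+tw)$ with $w\in K^{\infty}\setminus\{0\}$, $t\downarrow 0$, gives $\langle -y^*,w\rangle/\|w\|\le 0$, hence $\langle y^*,w\rangle\ge 0$, so $y^*\in K^{\infty,+}$ and ${\rm dom}(\widehat{D}^*F(x,f(x)))\subseteq K^{\infty,+}$. For the limiting coderivative I would run the same computation at approximating points: if $(x^*,-y^*)\in\mathbf{N}({\rm epi}_K(f),(x,f(x)))$, pick $\varepsilon_k\downarrow 0$, $(u_k,v_k)\xrightarrow{{\rm epi}_K(f)}(x,f(x))$ and $(x_k^*,y_k^*)\stackrel{w^*}{\longrightarrow}(x^*,y^*)$ with $(x_k^*,-y_k^*)\in\widehat{\mathbf{N}}_{\varepsilon_k}({\rm epi}_K(f),(u_k,v_k))$; the arc $t\mapsto(u_k,v_k+tw)$ stays in ${\rm epi}_K(f)$ and gives $\langle y_k^*,w\rangle\ge-\varepsilon_k\|w\|$ for all $w\in K^{\infty}$, and letting $k\to\infty$ yields $\langle y^*,w\rangle\ge 0$, so $y^*\in K^{\infty,+}$ and ${\rm dom}(D^*F(x,f(x)))\subseteq K^{\infty,+}$.

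The inclusions \eqref{4.5-250712} and \eqref{4.6-250712} then follow by normalization. Take $x^*\in D^*F(x,f(x))(y^*)$; by the previous step $y^*\in K^{\infty,+}$ and $(x^*,-y^*)\in\mathbf{N}({\rm epi}_K(f),(x,f(x)))$. If $y^*=0$, then $(x^*,0)\in\mathbf{N}({\rm epi}_K(f),(x,f(x)))$, that is $x^*\in\partial^{\infty}_K f(x)\subseteq\mathbb{R}_+\partial_K f(x)+\partial^{\infty}_K f(x)$. If $y^*\neq 0$, set $\lambda:=\|y^*\|>0$; since $\mathbf{N}({\rm epi}_K(f),(x,f(x)))$ and $K^{\infty,+}$ are cones, $(\lambda^{-1}x^*,-\lambda^{-1}y^*)\in\mathbf{N}({\rm epi}_K(f),(x,f(x)))$ with $\lambda^{-1}y^*\in K^{\infty,+}\cap\mathbf{S}_{\mathbb{Y}^*}$, hence $\lambda^{-1}x^*\in\partial_K f(x)$ by definition and $x^*=\lambda(\lambda^{-1}x^*)\in\mathbb{R}_+\partial_K f(x)$. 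This proves \eqref{4.5-250712}, and since $\widehat{\mathbf{N}}(\cdot,\cdot)$ is also a cone, replacing $\mathbf{N}$, $\partial_K$, $\partial^{\infty}_K$ by $\widehat{\mathbf{N}}$, $\widehat{\partial}_K$, $\widehat{\partial}^{\infty}_K$ throughout gives \eqref{4.6-250712}.

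The argument is essentially bookkeeping once the two elementary facts above are available, and it uses no Asplund hypothesis. The only delicate points are the weak$^*$ passage to the limit in the $\varepsilon$-normal description of the limiting normal cone (one uses $\langle y_k^*,w\rangle\to\langle y^*,w\rangle$ for each fixed $w$ together with $\varepsilon_k\|w\|\to 0$) and reading $\mathbb{R}_+\partial_K f(x)$ as the cone generated by $\partial_K f(x)$, so that it always contains $0$ and the case $y^*=0$ is covered; neither is a genuine obstacle.
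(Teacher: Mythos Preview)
Your proposal is correct and follows essentially the same route as the paper: identify ${\rm gph}(F)={\rm epi}_K(f)$, use the invariance of ${\rm epi}_K(f)$ under shifts by $K^{\infty}$ along arcs $t\mapsto(u_k,v_k+tw)$ in the $\varepsilon$-normal description to obtain $y^*\in K^{\infty,+}$, and then split the cases $y^*=0$ and $y^*\neq 0$ by normalization. Your write-up is in fact slightly more careful than the paper's in two places---you keep the factor $\|w\|$ in the inequality $\langle y_k^*,w\rangle\ge -\varepsilon_k\|w\|$, and you flag the convention that $\mathbb{R}_+\partial_K f(x)$ should contain $0$ so that the case $y^*=0$ is absorbed into the right-hand side.
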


\noindent
\begin{proof}
Let $x \in {\rm dom}(f)$.  
Take any $y^* \in {\rm dom}\big(D^*F(x, f(x))\big)$ and $x^* \in D^*F(x, f(x))(y^*)$.  
Then $(x^*, -y^*) \in \mathbf{N}({\rm gph}(F), (x, f(x)))$.  
By \eqref{2.1}, there exist sequences $\varepsilon_k \downarrow 0$, $(x_k, y_k) \xrightarrow{{\rm gph}(F)}(x, f(x))$,  
and $(x_k^*, -y_k^*) \xrightarrow{w^*} (x^*, -y^*)$ such that
\[
(x_k^*, -y_k^*) \in  {\widehat{\bf N}_{\varepsilon_k}}({\rm gph}(F), (x_k, y_k)) \quad \forall k.
\]
Thus, for each $k$, one has
\begin{equation}\label{4.8}
\limsup_{(u,v) \xrightarrow{{\rm gph}(F)} (x_k, y_k)}
\frac{\langle x_k^*, u - x_k \rangle - \langle y_k^*, v - y_k \rangle}
     {\|(u - x_k, v - y_k)\|}
\leq \varepsilon_k.
\end{equation}

Let $e \in K^{\infty}$.  
For any sufficiently small $\epsilon > 0$, we have
\[
y_k + \epsilon e - f(x_k) \in K + \epsilon e \subseteq K
\]
(since $e \in K^{\infty}$).  
This implies $(x_k, y_k + \epsilon e)  {\xrightarrow{{\rm gph}(F)}} (x_k, y_k)$ as $\epsilon \to 0^+$,  
and hence, by \eqref{4.8},
\[
- \langle y_k^*, e \rangle \leq \varepsilon_k.
\]
Taking the limit as $k \to \infty$ gives $\langle y^*, e \rangle \geq 0$, so $y^* \in K^{\infty,+}$.

If $y^* \neq 0$, then
\[
\left( \frac{x^*}{\|y^*\|}, -\frac{y^*}{\|y^*\|} \right) \in \mathbf{N}({\rm gph}(F), (x, f(x))),
\]
which implies $\frac{x^*}{\|y^*\|} \in \partial_K f(x)$, since ${\rm gph}(F) = {\rm epi}_K(f)$.  
Thus
\[
x^* \in \|y^*\| \, \partial_K f(x) \subseteq \mathbb{R}_+ \, \partial_K f(x).
\]

If $y^* = 0$, then $(x^*, 0) \in \mathbf{N}({\rm gph}(F), (x, f(x)))$,  
so $x^* \in \partial_K^{\infty} f(x)$.  
This proves \eqref{4.5-250712}.

The proof of \eqref{4.6-250712} is analogous and is omitted.  
\end{proof}

\begin{remark}
It is straightforward to verify that if $\mathbb{Y} := \mathbb{R}$ and $K := [0, +\infty)$,  
then the Fr\'echet and limiting subdifferentials with respect to $K$ reduce to the usual ones.  

Furthermore, if we define $F(x) := f(x) + K$ for any $x \in \mathbb{X}$, then one can check that
\[
\widehat{\partial}_K f(u) = \widehat{D}^*F(x, f(x))(\mathbf{S}_{\mathbb{Y}^*}), 
\quad
\partial_K f(u) = D^*F(x, f(x))(\mathbf{S}_{\mathbb{Y}^*}),
\]
and
\[
\widehat{\partial}_K^{\infty} f(u) = \widehat{D}^*F(x, f(x))(0),  
\quad
\partial_K^{\infty} f(u) = D^*F(x, f(x))(0).
\]
\end{remark}

\medskip

The next theorem provides necessary conditions for the metric subregularity of the conic inequality,  
and characterizes Asplund spaces in terms of such conditions.

\begin{theorem}\label{th4.1}
Let $\mathbb{X}$ be a Banach space. The following statements are equivalent:
\begin{itemize}
\item[\rm (i)] $\mathbb{X}$ is an Asplund space.

\item[\rm (ii)] For any Asplund space $\mathbb{Y}$, any nonempty closed cone $K \subseteq \mathbb{Y}$ with a nontrivial recession cone, and any $f \in \Lambda_K(\mathbb{X}, \mathbb{Y})$  {being} metrically subregular  {at $\bar{x} \in \mathbf{S}_K(f)$} with $f(\bar{x}) = 0$,  
there exists $\delta > 0$ such that
\begin{equation}\label{4.9}
\mathbf{N}(\mathbf{S}_K(f), x) \subseteq \mathbb{R}_+ \, \partial_K f(x) + \partial_K^{\infty} f(x)
\end{equation}
 {holds} for all $x \in \mathbf{S}_K(f) \cap \mathbf{B}(\bar{x}, \delta)$ with $f(x) = 0$. 

\item[\rm (iii)] For any Asplund space $\mathbb{Y}$, any nonempty closed cone $K \subseteq \mathbb{Y}$ with a nontrivial recession cone, and any $f \in \Lambda_K(\mathbb{X}, \mathbb{Y})$ that is metrically subregular at $\bar{x} \in \mathbf{S}_K(f)$ with $f(\bar{x}) = 0$,  
relation \eqref{4.9} holds for $x = \bar{x}$.

\item[\rm (iv)] For any Asplund space $\mathbb{Y}$, any nonempty closed cone $K \subseteq \mathbb{Y}$ with a nontrivial recession cone, and any continuous vector-valued function $f: \mathbb{X} \to \mathbb{Y}$  {being} metrically subregular at $\bar{x} \in \mathbf{S}_K(f)$ with $f(\bar{x}) = 0$,  
relation \eqref{4.9} holds for $x = \bar{x}$.
\end{itemize}
\end{theorem}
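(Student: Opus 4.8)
The plan is to prove the cycle $(i)\Rightarrow(ii)\Rightarrow(iii)\Rightarrow(iv)\Rightarrow(i)$, using throughout the parametrization $F(x):=f(x)+K$. This $F$ satisfies ${\rm gph}(F)={\rm epi}_K(f)$ (so $F\in\Gamma(\mathbb{X},\mathbb{Y})$ whenever $f\in\Lambda_K(\mathbb{X},\mathbb{Y})$), $F^{-1}(0)=\mathbf{S}_K(f)$, and $\mathbf{d}(0,F(x))=\mathbf{d}(f(x),-K)$, so that \eqref{4.4} is precisely metric subregularity of $F$ at $(\bar x,0)$. For $(i)\Rightarrow(ii)$ I would apply \cref{th3.1}, implication $(i)\Rightarrow(ii)$, to $F$ to obtain $\delta>0$ with $\mathbf{N}(\mathbf{S}_K(f),x)\subseteq D^*F(x,0)(\mathbb{Y}^*)$ for all $x\in\mathbf{B}(\bar x,\delta)\cap\mathbf{S}_K(f)$; then I restrict to those $x$ with $f(x)=0$ (so that $(x,0)=(x,f(x))$) and invoke \cref{lem4.1}, namely \eqref{4.5-250712}, to bound $D^*F(x,f(x))(\mathbb{Y}^*)$ by $\mathbb{R}_+\partial_K f(x)+\partial_K^{\infty} f(x)$, which gives \eqref{4.9}.

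The implication $(ii)\Rightarrow(iii)$ is the case $x=\bar x$ (recall $f(\bar x)=0$ and $\bar x\in\mathbf{B}(\bar x,\delta)$). For $(iii)\Rightarrow(iv)$ it suffices to observe that every continuous $f:\mathbb{X}\to\mathbb{Y}$ automatically lies in $\Lambda_K(\mathbb{X},\mathbb{Y})$: the set ${\rm epi}_K(f)=\{(x,y):y-f(x)\in K\}$ is the preimage of the closed cone $K$ under a continuous map, hence closed, so statement $(iii)$ applies verbatim to such $f$.

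The crux is $(iv)\Rightarrow(i)$, which I would prove by contraposition, reusing the construction from the proof of \cref{th3.1}. If $\mathbb{X}$ is not Asplund, write $\mathbb{X}=\mathbb{Z}\times\mathbb{R}$ with $\mathbb{Z}$ non-Asplund, choose by \cite[Theorem~1.5.3]{DGZ1993} an equivalent norm $|||\cdot|||$ on $\mathbb{Z}$ satisfying \eqref{3.10}, and set $\varphi(z):=-|||z|||$, $A_1:=\{0_{\mathbb{Z}}\}\times(-\infty,0]$, $A_2:={\rm epi}(\varphi)$, $\bar x:=0_{\mathbb{X}}$, and $\mathbb{Y}:=\mathbb{X}^2$ with the $\ell^1$-norm. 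Since $\varphi$ is positively homogeneous, $A_1$ and $A_2$ are closed cones, so $K:=(-A_1)\times(-A_2)$ is a closed (non-convex) cone, and $((0_{\mathbb{Z}},1),0_{\mathbb{X}})\in K^{\infty}$ shows $K^{\infty}\neq\{0\}$. Taking the continuous diagonal map $f(x):=(x,x)$ gives $F(x)=f(x)+K=(x-A_1)\times(x-A_2)$, which is exactly the multifunction of \cref{lem3.1} and of the proof of \cref{th3.1}; thus ${\rm epi}_K(f)={\rm gph}(F)$ is closed, $\mathbf{S}_K(f)=A_1\cap A_2=\{\bar x\}$, $f(\bar x)=0$, and the estimate establishing \eqref{3.11}, read through $\mathbf{d}(0,F(x))=\mathbf{d}(f(x),-K)$, shows that $f$ is metrically subregular at $\bar x$. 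Finally $\mathbf{N}(\mathbf{S}_K(f),\bar x)=\mathbf{N}(\{\bar x\},\bar x)=\mathbb{Z}^*\times\mathbb{R}$, whereas unfolding the definitions of $\partial_K f(\bar x)$ and $\partial_K^{\infty} f(\bar x)$ through $F$ yields $\partial_K f(\bar x)\cup\partial_K^{\infty} f(\bar x)\subseteq D^*F(\bar x,0)(\mathbb{Y}^*)$, which by \cref{lem3.1} (equivalently \eqref{3.13-a}) together with the identities $\mathbf{N}(A_1,\bar x)=\mathbb{Z}^*\times[0,+\infty)$ and $\mathbf{N}(A_2,\bar x)=\{(0,0)\}$ from \eqref{3.15-250708} and \eqref{3.17-250710} is contained in $\mathbb{Z}^*\times[0,+\infty)$; hence $\mathbb{R}_+\partial_K f(\bar x)+\partial_K^{\infty} f(\bar x)\subseteq\mathbb{Z}^*\times[0,+\infty)\subsetneq\mathbb{Z}^*\times\mathbb{R}$, so \eqref{4.9} fails at $\bar x$, contradicting $(iv)$. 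Therefore $\mathbb{X}$ is Asplund.

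I expect the real work to sit entirely in $(iv)\Rightarrow(i)$: one must check carefully that $K=(-A_1)\times(-A_2)$ is genuinely a closed cone with $K^{\infty}\neq\{0\}$ (the non-convexity of $-A_2$ is harmless, since the conic inequality of Section~4 permits non-convex $K$), that the metric-subregularity estimate \eqref{3.11} transfers to metric subregularity of $f$ in the sense of \eqref{4.4}, and that $\partial_K f(\bar x)$ and $\partial_K^{\infty} f(\bar x)$ relate to the Mordukhovich coderivative of $F$ precisely enough that the failure of \eqref{4.9} at $\bar x$ coincides with the failure of the limiting $\mathbf{BCQ}$ of $F$ at $(\bar x,0)$ already exhibited in the proof of \cref{th3.1}. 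The forward implications are routine once the correspondence $F:=f+K$, \cref{th3.1}, and \cref{lem4.1} are in place.
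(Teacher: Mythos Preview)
Your proposal is correct and follows essentially the same route as the paper: the same reduction $F(x):=f(x)+K$, the same use of \cref{th3.1} and \cref{lem4.1} for $(i)\Rightarrow(ii)$, and the same counterexample $K=-(A_1\times A_2)$, $f(x)=(x,x)$ recycled from the proof of \cref{th3.1} for $(iv)\Rightarrow(i)$. Your derivation of the key inclusion $\partial_K f(\bar x)\cup\partial_K^{\infty} f(\bar x)\subseteq \mathbf{N}(A_1,\bar x)+\mathbf{N}(A_2,\bar x)$ via $D^*F(\bar x,0)(\mathbb{Y}^*)$ and \cref{lem3.1} is exactly the paper's argument for \eqref{4.11}, just phrased through the coderivative rather than unpacking the normal cone directly.
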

\begin{proof}
(i) $\,\Rightarrow\,$ (ii):  
Let $\mathbb{Y}$ be an Asplund space, let $K \subseteq \mathbb{Y}$ be a nonempty closed cone with a nontrivial recession cone, and let $f \in \Lambda_K(\mathbb{X}, \mathbb{Y})$ be metrically subregular at some $\bar{x} \in \mathbf{S}_K(f)$ with $f(\bar{x}) = 0$.  
Define $F(x) := f(x) + K$ for any $x \in \mathbb{X}$. Then $F \in \Gamma(\mathbb{X}, \mathbb{Y})$, and one can verify that $F$ is metrically subregular at $(\bar{x}, 0)$ if and only if the conic inequality $f(x) \leq_K 0$ is metrically subregular at $\bar{x}$.  

By \cref{th3.1}, there exists $\delta > 0$ such that for any $x \in \mathbf{B}(\bar{x}, \delta) \cap F^{-1}(0)$, we have
\begin{equation}\label{4.8-250620}
    \mathbf{N}(F^{-1}(0), x) \subseteq D^*F(x, 0)(\mathbb{Y}^*).
\end{equation}
Now, for any $x \in \mathbf{S}_K(f) \cap \mathbf{B}(\bar{x}, \delta)$ with $f(x) = 0$,  
\cref{lem4.1} yields
\[
D^*F(x, 0)(\mathbb{Y}^*) \subseteq \mathbb{R}_+ \, \partial_K f(x) + \partial_K^{\infty} f(x),
\]
and combining this with \eqref{4.8-250620} gives
\[
\mathbf{N}(F^{-1}(0), x) \subseteq \mathbb{R}_+ \, \partial_K f(x) + \partial_K^{\infty} f(x).
\]
Since $F^{-1}(0) = \mathbf{S}_K(f)$, this proves (ii).  

The implications (ii) $\,\Rightarrow\,$ (iii) $\,\Rightarrow\,$ (iv) are immediate, so it remains to prove that (iv) $\,\Rightarrow\,$ (i).

Suppose, for contradiction, that $\mathbb{X}$ is not an Asplund space.  
Following \cite{MW2000,FM1998}, we can write $\mathbb{X} = \mathbb{Z} \times \mathbb{R}$ with the norm
\[
\|(z, \alpha)\| := \|z\| + |\alpha|, \quad (z, \alpha) \in \mathbb{Z} \times \mathbb{R},
\]
where $\mathbb{Z}$ is not an Asplund space.  
By \cite[Theorem~1.5.3]{DGZ1993} (see also \cite[Theorem~2.1]{FM1998}), there exists an equivalent norm $|||\cdot|||$ on $\mathbb{Z}$ and a constant $\gamma > 0$ such that \eqref{3.10} holds.

Define $\varphi : \mathbb{Z} \to \mathbb{R}$ by $\varphi(z) := -|||z|||$, and set
\[
A_1 := \{0_{\mathbb{Z}}\} \times (-\infty, 0], 
\quad 
A_2 := \operatorname{epi}(\varphi), 
\quad 
\bar{x} := 0_{\mathbb{X}}, 
\quad 
\bar{y} := (\bar{x}, \bar{x}).
\]
Let $\mathbb{Y} := \mathbb{X}^2$ be equipped with the $\ell^1$-norm, and define
\[
K := -(A_1 \times A_2).
\]
Then $K \subseteq \mathbb{Y}$ is a nonempty closed cone with a nontrivial recession cone (indeed, $K^{\infty} \supseteq -A_1 \times \{0\}$).  
Define $f : \mathbb{X} \to \mathbb{Y}$ by $f(x) := (x, x)$, and consider the conic inequality
\[
f(x) \leq_K 0.
\]
Let $F(x) := f(x) + K$ for all $x \in \mathbb{X}$. Then $\operatorname{epi}_K(f) = \operatorname{gph}(F)$.  
\medskip 
We first claim that
\begin{equation}\label{4.11}
    \partial_K f(\bar{x}) \cup \partial_K^{\infty} f(\bar{x}) 
    \subseteq \mathbf{N}(A_1, \bar{x}) + \mathbf{N}(A_2, \bar{x}).
\end{equation}
Indeed, let $x^* \in \partial_K f(\bar{x}) \cup \partial_K^{\infty} f(\bar{x})$.  
Then there exists $y^* = \big(y^*(1), y^*(2)\big) \in K^{\infty,+}$ such that
\[
(x^*, -y^*) \in \mathbf{N}(\operatorname{epi}_K(f), (\bar{x}, f(\bar{x}))) 
= \mathbf{N}(\operatorname{gph}(F), (\bar{x}, f(\bar{x}))),
\]
and by \cref{lem3.1},
\[
y^* \in \mathbf{N}(A_1, \bar{x}) \times \mathbf{N}(A_2, \bar{x}),
\quad
x^* = y^*(1) + y^*(2).
\]
This proves \eqref{4.11}.

From the proof of \eqref{3.11}, we also have
\[
\mathbf{d}(x, \mathbf{S}_K(f)) \leq 2 \, \mathbf{d}(f(x), -K),
\quad \forall x = (z, \alpha) \in \mathbb{Z} \times \mathbb{R},
\]
which means that the conic inequality $f(x) \leq_K 0$ is metrically subregular at $\bar{x} \in \mathbf{S}_K(f)$.  

By \eqref{3.17-250710} and \eqref{3.18-250710}, we have
\begin{equation}\label{4.12}
    \mathbf{N}(A_1, \bar{x}) = \mathbb{Z}^* \times [0, +\infty), 
    \quad
    \mathbf{N}(A_2, \bar{x}) = \{(0, 0)\},
\end{equation}
and
\begin{equation}\label{4.13}
    \mathbf{N}(\mathbf{S}_K(f), \bar{x}) 
    = \mathbf{N}(F^{-1}(\bar{y}), \bar{x}) 
    = \mathbb{Z}^* \times \mathbb{R} 
    \not\subseteq \mathbf{N}(A_1, \bar{x}) + \mathbf{N}(A_2, \bar{x}).
\end{equation}

On the other hand, \eqref{4.11} and \eqref{4.12} imply
\[
\mathbb{R}_+ \, \partial_K f(\bar{x}) + \partial_K^{\infty} f(\bar{x})
\subseteq \mathbf{N}(A_1, \bar{x}) + \mathbf{N}(A_2, \bar{x}),
\]
which, together with \eqref{4.13}, gives
\[
\mathbf{N}(\mathbf{S}_K(f), \bar{x}) 
\not\subseteq \mathbb{R}_+ \, \partial_K f(\bar{x}) + \partial_K^{\infty} f(\bar{x}),
\]
contradicting \eqref{4.9} with $x = \bar{x}$.  
Thus $\mathbb{X}$ must be Asplund.  
\end{proof}

\begin{remark}
It is worth noting that in the special case where $\mathbb{Y} := \mathbb{R}$ and $K := [0,+\infty)$, metric subregularity of the conic inequality reduces to the local error bound for the scalar inequality $f(x) \leq 0$ in Asplund spaces. This was the subject of our recent work in \cite{WTY2025-JCA}, and Theorem~\ref{th4.1} actually recovers \cite[Theorem~3.1]{WTY2025-JCA}. 

Moreover, it is well known that Lewis and Pang \cite[Proposition~2]{LewisPang1998} studied error bounds for convex inequality systems and established necessary conditions for such bounds in terms of normal cones to the solution set and subdifferentials of the underlying convex function. In particular, their result shows that a local error bound implies the basic constraint qualification (BCQ). More precisely:

\emph{Given a lower semicontinuous convex function $f : \mathbb{R}^m \to \mathbb{R} \cup \{+\infty\}$, if the convex inequality $f(x) \leq 0$ admits a local error bound at $\bar{x} \in  { \mathbf{S}_f} := \{ x \in \mathbb{R}^m : f(x) \leq 0 \}$ with $f(\bar{x}) = 0$, then the following BCQ holds at $\bar{x}$:}
\[
\mathbf{N}( { \mathbf{S}_f},\bar{x}) \subseteq \mathbb{R}_+ \, \partial f(\bar{x}).
\]
When restricted to convex inequalities, Theorem~\ref{th4.1}(ii) recovers \cite[Proposition~2]{LewisPang1998}, since
\[
\mathbb{R}_+ \, \partial f(\bar{x}) + \partial^{\infty} f(\bar{x}) \subseteq \mathbb{R}_+ \, \partial f(\bar{x}).
\]
From the viewpoint of Theorem~\ref{th4.1}, the validity of such implication results in both \cite[Theorem~3.1]{WTY2025-JCA} and \cite[Proposition~2]{LewisPang1998} essentially stems from the Asplund property of the underlying space.
\end{remark}

 {It is known that for a given closed cone and a vector-valued function, the corresponding conic inequality can be equivalently described by a special multifunction, while the reverse case seems not true necessarily. However, based on \cref{th3.1} and \cref{th4.1}, the following theorem  establishes an equivalence result between conic inequalities and multifunctions in the sense of proving necessary dual conditions for metric subregularity in terms of limiting normal cones; that is,}

 {\begin{theorem}
		Let $\mathbb{X}$ be a Banach space. Then following statements are equivalent:
		\begin{itemize}
			\item[\rm (i)] For any Asplund space $\mathbb{Y}$, any nonempty closed cone $K \subseteq \mathbb{Y}$ with a nontrivial recession cone, and any $f \in \Lambda_K(\mathbb{X}, \mathbb{Y})$ being metrically subregular at  $\bar{x} \in \mathbf{S}_K(f)$ with $f(\bar{x}) = 0$,  one has
			\begin{equation}
				\mathbf{N}(\mathbf{S}_K(f), \bar x) \subseteq \mathbb{R}_+ \, \partial_K f(\bar x) + \partial_K^{\infty} f(\bar x)
			\end{equation}
			\item[\rm (ii)] For every Asplund space $\mathbb{Y}$ and every $F \in \Gamma(\mathbb{X}, \mathbb{Y})$ being metrically subregular at $(\bar x, \bar y) \in {\rm gph}(F)$, $F$ satisfies the limiting $\mathbf{BCQ}$ at $(\bar x, \bar y)$.
		\end{itemize}
\end{theorem}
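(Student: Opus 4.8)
The plan is to obtain the theorem as an immediate corollary of the two characterizations already proved. First I would observe that statement (i) is literally item (iii) of \cref{th4.1} and statement (ii) is literally item (iii) of \cref{th3.1}. Since \cref{th4.1} establishes that its item (iii) holds if and only if $\mathbb{X}$ is an Asplund space, and \cref{th3.1} establishes the same for its item (iii), transitivity gives (i) $\Leftrightarrow$ ``$\mathbb{X}$ is Asplund'' $\Leftrightarrow$ (ii). At this level the proof is pure bookkeeping: one only needs to quote \cref{th3.1,th4.1}.

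For readers who prefer a more direct argument, the implication (ii) $\Rightarrow$ (i) can also be proved without passing through the intermediate ``$\mathbb{X}$ is Asplund''. Given $f\in\Lambda_K(\mathbb{X},\mathbb{Y})$ metrically subregular at $\bar x\in\mathbf{S}_K(f)$ with $f(\bar x)=0$, I would set $F(x):=f(x)+K$; then $F\in\Gamma(\mathbb{X},\mathbb{Y})$ since $\mathrm{gph}(F)=\mathrm{epi}_K(f)$ is closed, $F^{-1}(0)=\mathbf{S}_K(f)$, and metric subregularity of the conic inequality at $\bar x$ coincides with metric subregularity of $F$ at $(\bar x,0)$. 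Applying (ii) to this $F$ yields $\mathbf{N}(F^{-1}(0),\bar x)\subseteq D^*F(\bar x,0)(\mathbb{Y}^*)$, and then \cref{lem4.1} gives $D^*F(\bar x,0)(\mathbb{Y}^*)\subseteq\mathbb{R}_+\,\partial_K f(\bar x)+\partial_K^{\infty}f(\bar x)$; combining the two inclusions produces the inclusion required in (i).

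The reverse implication (i) $\Rightarrow$ (ii) is the substantive one, and here the detour through ``$\mathbb{X}$ is Asplund'' seems unavoidable, because conic inequalities form a strictly narrower class than closed multifunctions and so cannot encode an arbitrary $F$. I would argue (i) $\Rightarrow$ ``$\mathbb{X}$ is Asplund'' $\Rightarrow$ (ii): the first step is exactly the implication (iv)$\Rightarrow$(i) (equivalently (iii)$\Rightarrow$(i)) of \cref{th4.1}, whose proof, for a non-Asplund $\mathbb{X}=\mathbb{Z}\times\mathbb{R}$, renorms $\mathbb{Z}$ via \cite[Theorem~1.5.3]{DGZ1993} and exhibits the metrically subregular conic inequality $f(x)=(x,x)\leq_K 0$ with $K=-(A_1\times A_2)$ violating \eqref{4.9} at $\bar x$; the second step is the implication (i)$\Rightarrow$(iii) of \cref{th3.1}. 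Thus the main obstacle — constructing a counterexample on a space that fails the Asplund property — is already handled inside \cref{th4.1}, and the present theorem follows by composing the two characterizations, with no further estimates needed.
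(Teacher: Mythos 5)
Your proposal is correct and follows essentially the same route as the paper, which derives this equivalence precisely by observing that statement (i) is item (iii) of \cref{th4.1} and statement (ii) is item (iii) of \cref{th3.1}, each being equivalent to $\mathbb{X}$ being Asplund. Your supplementary direct argument for (ii) $\Rightarrow$ (i) via $F(x):=f(x)+K$ and \cref{lem4.1} is also sound, and simply reproduces the mechanism used inside the proof of \cref{th4.1} with statement (ii) playing the role of the limiting $\mathbf{BCQ}$ conclusion.
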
}

Next, we establish necessary or sufficient conditions for a Banach space to be Asplund in terms of  {Fr\'echet  subdifferentials} of vector-valued functions with respect to $K$. For this purpose, we first recall a lemma on the Fr\'echet subdifferential of a given vector-valued function with respect to $K$.

\begin{lemma}\label{lem4.2}
Let $\mathbb{X}$ and $\mathbb{Y}$ be Banach spaces, and let $K \subseteq \mathbb{Y}$ be a nonempty closed convex cone. Suppose $f \in \Lambda_K(\mathbb{X},\mathbb{Y})$ and define $F(x) := f(x) + K$ for all $x \in \mathbb{X}$. Then, for any $(u,v) \in \mathrm{gph}(F)$, we have
\begin{equation}\label{4.14}
\widehat{\mathbf{N}}(\mathrm{gph}(F), (u,v)) \subseteq \widehat{\mathbf{N}}(\mathrm{epi}_K(f), (u,f(u))).
\end{equation}
Furthermore, if $f$ is continuous at $u$, then
\begin{equation}\label{4.15}
\mathbf{N}(\mathrm{gph}(F), (u,v)) \subseteq \mathbf{N}(\mathrm{epi}_K(f), (u,f(u))).
\end{equation}
\end{lemma}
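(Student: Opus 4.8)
The plan is to exploit the elementary fact that $\mathrm{gph}(F)$ and $\mathrm{epi}_K(f)$ coincide as subsets of $\mathbb{X}\times\mathbb{Y}$: indeed $(x,y)\in\mathrm{gph}(F)$ means $y\in f(x)+K$, i.e. $y-f(x)\in K$, i.e. $f(x)\leq_K y$ (see \eqref{4.1}), i.e. $(x,y)\in\mathrm{epi}_K(f)$. Thus the content of \eqref{4.14} and \eqref{4.15} is merely a comparison of normal cones to this one set at two a priori different base points: an arbitrary point $(u,v)$ of the set and the ``lowest'' point $(u,f(u))$ in the fibre over $u$. Note that $(u,v)\in\mathrm{epi}_K(f)$ forces $u\in\mathrm{dom}(f)$, so $(u,f(u))$ is meaningful and $k_0:=v-f(u)\in K$. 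The structural key is that the translation $(x,y)\mapsto(x,y+k_0)$ maps $\mathrm{epi}_K(f)$ into itself — because $K$ is a convex cone and hence closed under addition, $y-f(x)\in K$ implies $(y+k_0)-f(x)\in K$ — while it sends $(u,f(u))$ to $(u,v)$, fixes first coordinates, and preserves $y$-differences in the sense that $(y+k_0)-v=y-f(u)$.

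First I would establish the Fr\'echet inclusion \eqref{4.14} in the following $\varepsilon$-enlarged form: for every $\varepsilon\ge0$,
\[
\widehat{\mathbf{N}}_{\varepsilon}(\mathrm{epi}_K(f),(u,v))\subseteq\widehat{\mathbf{N}}_{\varepsilon}(\mathrm{epi}_K(f),(u,f(u))).
\]
Fix $(x^*,y^*)$ in the left-hand set and an arbitrary sequence $(x_n,y_n)\xrightarrow{\mathrm{epi}_K(f)}(u,f(u))$ with $(x_n,y_n)\ne(u,f(u))$. The translated sequence $(x_n,y_n+k_0)$ lies in $\mathrm{epi}_K(f)$, tends to $(u,v)$, and is never equal to $(u,v)$; moreover, using $(y_n+k_0)-v=y_n-f(u)$, its difference quotient relative to $(x^*,y^*)$ at the point $(u,v)$ is exactly the difference quotient of $(x_n,y_n)$ relative to $(x^*,y^*)$ at $(u,f(u))$. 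Since the translated sequences form a subfamily of those defining $\widehat{\mathbf{N}}_{\varepsilon}$ at $(u,v)$, the $\limsup$ of the latter quotients is $\le\varepsilon$, and as the approaching sequence was arbitrary, $(x^*,y^*)\in\widehat{\mathbf{N}}_{\varepsilon}(\mathrm{epi}_K(f),(u,f(u)))$. Taking $\varepsilon=0$ yields \eqref{4.14}.

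Then I would deduce \eqref{4.15} from this $\varepsilon$-version together with the definition \eqref{2.1} of the limiting normal cone. Let $(x^*,y^*)\in\mathbf{N}(\mathrm{gph}(F),(u,v))=\mathbf{N}(\mathrm{epi}_K(f),(u,v))$ and choose $\varepsilon_n\downarrow0$, $(u_n,v_n)\xrightarrow{\mathrm{epi}_K(f)}(u,v)$ and $(x_n^*,y_n^*)\xrightarrow{w^*}(x^*,y^*)$ with $(x_n^*,y_n^*)\in\widehat{\mathbf{N}}_{\varepsilon_n}(\mathrm{epi}_K(f),(u_n,v_n))$. Applying the $\varepsilon$-version at each $(u_n,v_n)$ (with translation by $k_n:=v_n-f(u_n)\in K$) gives $(x_n^*,y_n^*)\in\widehat{\mathbf{N}}_{\varepsilon_n}(\mathrm{epi}_K(f),(u_n,f(u_n)))$, and $(u_n,f(u_n))\in\mathrm{epi}_K(f)$ trivially. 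Here continuity of $f$ at $u$ enters: from $u_n\to u$ it gives $f(u_n)\to f(u)$, hence $(u_n,f(u_n))\xrightarrow{\mathrm{epi}_K(f)}(u,f(u))$; the sequences $\varepsilon_n$, $(u_n,f(u_n))$, $(x_n^*,y_n^*)$ then witness $(x^*,y^*)\in\mathbf{N}(\mathrm{epi}_K(f),(u,f(u)))$, which is \eqref{4.15}.

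I do not expect any real obstacle; the argument is routine once the translation trick is identified. The two points requiring care are: (i) the translation yields only a \emph{subfamily} of the sequences approaching $(u,v)$, which is precisely why the inclusion holds in the stated direction and not the reverse; and (ii) continuity of $f$ at $u$ is indispensable in \eqref{4.15}, since otherwise the ``lower'' points $(u_n,f(u_n))$ need not converge to $(u,f(u))$ and the passage to the limiting normal cone breaks down. It is also worth noting explicitly that convexity of $K$ (equivalently, stability under addition) is exactly what makes the translation preserve $\mathrm{epi}_K(f)$.
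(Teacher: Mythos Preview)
Your proposal is correct and follows essentially the same approach as the paper: both arguments use the translation by $k_0=v-f(u)\in K$ (relying on $K+K\subseteq K$) to push sequences approaching $(u,f(u))$ in $\mathrm{epi}_K(f)$ to sequences approaching $(u,v)$, and both invoke continuity of $f$ at $u$ to ensure $(u_n,f(u_n))\to(u,f(u))$ in the limiting case. Your explicit formulation of the $\varepsilon$-enlarged inclusion is a slightly cleaner way to organize the limiting step, but the paper does the same computation inline; the content is identical.
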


\begin{proof}
Let $(u^*,v^*) \in \widehat{\mathbf{N}}(\mathrm{gph}(F),(u,v))$. Then
\begin{equation}\label{4.16}
\limsup_{(x,y) \xrightarrow{\mathrm{gph}(F)} (u,v)} \frac{\langle u^*,x-u\rangle + \langle v^*,y-v\rangle}{\|(x-u,y-v)\|} \leq 0.
\end{equation}
For any $(x,y) \to (u,f(u))$ with $(x,y) \in \mathrm{epi}_K(f)$, we have
\[
y + v - f(u) \to v, \quad\text{and}\quad y + v - f(u) \in f(x) + K + K \subseteq f(x) + K,
\]
since $K$ is a convex cone. Thus, by \eqref{4.16},
\[
\limsup_{(x,y) \xrightarrow{\mathrm{epi}_K(f)} (u,f(u))} \frac{\langle u^*,x-u\rangle + \langle v^*,y-f(u)\rangle}{\|(x-u,y-f(u))\|} \leq 0,
\]
which shows that $(u^*,v^*) \in \widehat{\mathbf{N}}(\mathrm{epi}_K(f),(u,f(u)))$.

Now assume $f$ is continuous at $u$. Let $(u^*,v^*) \in \mathbf{N}(\mathrm{gph}(F),(u,v))$. Then there exist sequences $\epsilon_k \downarrow 0$, $(u_k,v_k) \xrightarrow{\mathrm{gph}(F)} (u,v)$, and $(u_k^*,v_k^*) \xrightarrow{w^*} (u^*,v^*)$ such that
\[
(u_k^*,v_k^*) \in  {\widehat{\bf N}_{\epsilon_k}}(\mathrm{gph}(F),(u_k,v_k)), \quad \forall k.
\]
That is,
\begin{equation}\label{4.17}
\limsup_{(x,y) \xrightarrow{\mathrm{gph}(F)} (u_k,v_k)} \frac{\langle u_k^*,x-u_k\rangle + \langle v_k^*,y-v_k\rangle}{\|(x-u_k,y-v_k)\|} \leq \epsilon_k.
\end{equation}
For any $(x,y) \to (u_k,f(u_k))$ with $(x,y) \in \mathrm{epi}_K(f)$, we have
\[
y + v_k - f(u_k) \to v_k, \quad\text{and}\quad y + v_k - f(u_k) \in f(x) + K + K \subseteq f(x) + K.
\]
Thus, from \eqref{4.17},
\[
\limsup_{(x,y) \xrightarrow{\mathrm{epi}_K(f)} (u_k,f(u_k))} \frac{\langle u_k^*,x-u_k\rangle + \langle v_k^*,y-f(u_k)\rangle}{\|(x-u_k,y-f(u_k))\|} \leq \epsilon_k,
\]
which means
\[
(u_k^*,v_k^*) \in  {\widehat{\bf N}_{\epsilon_k}}(\mathrm{epi}_K(f),(u_k,f(u_k))).
\]
Since $u_k \to u$ and $f(u_k) \to f(u)$ by continuity, passing to the limit yields
\[
(u^*,v^*) \in \mathbf{N}(\mathrm{epi}_K(f),(u,f(u))).
\]
Thus \eqref{4.15} holds. 
\end{proof}

In terms of Fr\'echet subdifferentials of the given vector-valued function with respect to $K$, the following theorem provides necessary or sufficient conditions for a Banach space to be Asplund.


\begin{theorem}\label{theorem4.2}
Let $\mathbb{X}$ be a Banach space. Consider the following statements:
\begin{itemize}
    \item[\rm (i)] Suppose that $\mathbb{X}$ is an Asplund space. Then for any Asplund space $\mathbb{Y}$, any nonempty closed convex cone $K\subseteq \mathbb{Y}$, and any $f\in\Lambda_K(\mathbb{X},\mathbb{Y})$  {being} metrically subregular  {at $\bar x\in \mathbf{S}_K(f)$}, there exists $\delta>0$ such that for any $\varepsilon>0$,  {one has}
    \begin{equation}\label{4.9a}
        \widehat{\mathbf{N}}(\mathbf{S}_K(f), x) \subseteq  {\bigcup \left\{\mathbb{R}_+\widehat\partial_K f(u) + \widehat\partial_K^{\infty} f(u): u\in \mathbf{B}(x,\varepsilon)\right\}} + \varepsilon\, {\bf B}_{\mathbb{X}^*}
    \end{equation}
     {holds} for all $x\in \mathbf{S}_K(f) \cap \mathbf{B}(\bar x,\delta)$.

    \item[\rm (ii)] Suppose that for any Asplund space $\mathbb{Y}$, any nonempty closed cone $K\subseteq \mathbb{Y}$ with a nontrivial recession cone, and any $f\in\Lambda_K(\mathbb{X},\mathbb{Y})$  {being} metrically subregular  {at $\bar x\in \mathbf{S}_K(f)$}, one has
    \begin{equation}\label{4.9b}
        \widehat{\mathbf{N}}(\mathbf{S}_K(f), \bar x) \subseteq  {\bigcup \left\{\mathbb{R}_+\widehat\partial_K f(u) + \widehat\partial_K^{\infty} f(u): u\in \mathbf{B}(\bar x,\varepsilon)\right\}} 
        + \varepsilon\, {\bf B}_{\mathbb{X}^*}
    \end{equation}
    holds for any $\varepsilon>0$. Then $\mathbb{X}$ is an Asplund space.
\end{itemize}
\end{theorem}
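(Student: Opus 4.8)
The plan is to treat the two implications separately, reducing each to facts already available for multifunctions: \cref{theorem3.2} together with \cref{lem4.1,lem4.2} for the forward direction, and the non-Asplund construction used in the proofs of \cref{th3.1,theorem3.2,th4.1} for the converse.

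\emph{For (i).} Set $F(x):=f(x)+K$. Then $F\in\Gamma(\mathbb{X},\mathbb{Y})$, $\mathrm{gph}(F)=\mathrm{epi}_K(f)$, $F^{-1}(0)=\mathbf{S}_K(f)$, and $\mathbf{d}(0,F(x))=\mathbf{d}(f(x),-K)$, so the metric subregularity of the conic inequality at $\bar x$ is equivalent to that of $F$ at $(\bar x,0)$. Since $\mathbb{X}$ and $\mathbb{Y}$ are Asplund, \cref{theorem3.2} furnishes $\delta>0$ such that, for every $\varepsilon>0$ and every $x\in\mathbf{B}(\bar x,\delta)\cap\mathbf{S}_K(f)$,
\[
\widehat{\mathbf{N}}(\mathbf{S}_K(f),x)\subseteq\bigcup\{\widehat D^*F(u,v)(\mathbb{Y}^*):(u,v)\in\mathbf{B}((x,0),\varepsilon)\cap\mathrm{gph}(F)\}+\varepsilon\mathbf{B}_{\mathbb{X}^*}.
\]
The decisive step is to show that $\widehat D^*F(u,v)(\mathbb{Y}^*)\subseteq\mathbb{R}_+\widehat\partial_K f(u)+\widehat\partial_K^\infty f(u)$ for every $(u,v)\in\mathrm{gph}(F)$. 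Indeed, by \cref{lem4.2} (here $K$ is a convex cone), $\widehat{\mathbf{N}}(\mathrm{gph}(F),(u,v))\subseteq\widehat{\mathbf{N}}(\mathrm{epi}_K(f),(u,f(u)))=\widehat{\mathbf{N}}(\mathrm{gph}(F),(u,f(u)))$, whence $\widehat D^*F(u,v)(\mathbb{Y}^*)\subseteq\widehat D^*F(u,f(u))(\mathbb{Y}^*)$, and the latter is contained in $\mathbb{R}_+\widehat\partial_K f(u)+\widehat\partial_K^\infty f(u)$ by \eqref{4.6-250712} of \cref{lem4.1}. Since $(u,v)\in\mathbf{B}((x,0),\varepsilon)$ forces $u\in\mathbf{B}(x,\varepsilon)$, substituting this into the fuzzy inclusion above yields exactly \eqref{4.9a}.

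\emph{For (ii).} Argue by contradiction: suppose $\mathbb{X}$ is not Asplund. As in the proofs of \cref{th3.1,theorem3.2,th4.1}, write $\mathbb{X}=\mathbb{Z}\times\mathbb{R}$ with $\|(z,\alpha)\|=\|z\|+|\alpha|$ and $\mathbb{Z}$ not Asplund, fix the equivalent norm $|||\cdot|||$ on $\mathbb{Z}$ satisfying \eqref{3.10}, and put $\varphi(z):=-|||z|||$, $A_1:=\{0_\mathbb{Z}\}\times(-\infty,0]$, $A_2:=\mathrm{epi}(\varphi)$, $\bar x:=0_\mathbb{X}$, $\mathbb{Y}:=\mathbb{X}^2$ with the $\ell^1$-norm, $K:=-(A_1\times A_2)$, and $f(x):=(x,x)$. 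Exactly as in the proof of \cref{th4.1}, $K$ is a nonempty closed cone with $K^\infty\supseteq(-A_1)\times\{0\}\neq\{0\}$, $f\in\Lambda_K(\mathbb{X},\mathbb{Y})$ is continuous, $\mathbf{S}_K(f)=A_1\cap A_2=\{\bar x\}$, and the estimate from the proof of \eqref{3.11} gives $\mathbf{d}(x,\mathbf{S}_K(f))\leq2\,\mathbf{d}(f(x),-K)$, so $f$ is metrically subregular at $\bar x$. Writing $F(x):=f(x)+K$, one has $\mathrm{epi}_K(f)=\mathrm{gph}(F)$ with $F(x)=(x-A_1)\times(x-A_2)$, the multifunction of \cref{lem3.1}. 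Because $f(u)=(u,u)$, the ``set points'' $u-f(u)(1)$ and $u-f(u)(2)$ both equal $0_\mathbb{X}$, so \cref{lem3.1}(i) gives $\mathrm{dom}(\widehat D^*F(u,f(u)))\subseteq\widehat{\mathbf{N}}(A_1,0_\mathbb{X})\times\widehat{\mathbf{N}}(A_2,0_\mathbb{X})$ and $\widehat D^*F(u,f(u))(y^*)=\{y^*(1)+y^*(2)\}$ for $y^*$ in this domain. Combining this with $\widehat{\mathbf{N}}(A_1,0_\mathbb{X})=\mathbb{Z}^*\times[0,+\infty)$ (convexity of $A_1$) and $\widehat{\mathbf{N}}(A_2,0_\mathbb{X})=\{(0,0)\}$ (cf.\ \eqref{4.12} and the Fr\'echet normal-cone computation for $A_2$ in the proof of \cref{theorem3.2}), together with the identities $\widehat\partial_K f(u)=\widehat D^*F(u,f(u))(\mathbf{S}_{\mathbb{Y}^*})$ and $\widehat\partial_K^\infty f(u)=\widehat D^*F(u,f(u))(0)$ (Remark after \cref{lem4.1}), we obtain $\widehat\partial_K^\infty f(u)=\{0\}$ and $\mathbb{R}_+\widehat\partial_K f(u)\subseteq\mathbb{Z}^*\times[0,+\infty)$ for every $u$. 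Hence the right-hand side of \eqref{4.9b} lies in $\mathbb{Z}^*\times[-\varepsilon,+\infty)$, whereas $\widehat{\mathbf{N}}(\mathbf{S}_K(f),\bar x)=\widehat{\mathbf{N}}(\{\bar x\},\bar x)=\mathbb{Z}^*\times\mathbb{R}$; choosing any element with last coordinate $<-\varepsilon$ contradicts \eqref{4.9b}, so $\mathbb{X}$ must be Asplund.

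\emph{Main obstacle.} The delicate part is (ii): one must check that the chosen $K$ genuinely satisfies the hypotheses (a closed cone with nontrivial recession cone, and $f\in\Lambda_K$ metrically subregular at a point of $\mathbf{S}_K(f)$) so that the assumed fuzzy inclusion actually applies, and then pin down $\widehat\partial_K f(u)$ and $\widehat\partial_K^\infty f(u)$ \emph{uniformly} for $u$ near $\bar x$ via \cref{lem3.1} and the normal-cone computations for $A_1,A_2$, so that the inclusion provably fails. In (i) the only subtlety is the transition from $\widehat D^*F(u,v)$ with $v\neq f(u)$ to the subdifferentials anchored at $(u,f(u))$, which is precisely what \cref{lem4.2} is designed to provide.
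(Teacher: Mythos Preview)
Your proposal is correct and follows essentially the same route as the paper: for (i) you reduce to \cref{theorem3.2} via $F(x)=f(x)+K$ and then pass from $\widehat D^*F(u,v)(\mathbb{Y}^*)$ to $\mathbb{R}_+\widehat\partial_Kf(u)+\widehat\partial_K^\infty f(u)$ using \cref{lem4.2} followed by \cref{lem4.1}, exactly as in the paper; for (ii) you reproduce the same non-Asplund construction ($\mathbb{X}=\mathbb{Z}\times\mathbb{R}$, $A_1,A_2$, $K=-(A_1\times A_2)$, $f(x)=(x,x)$), invoke \cref{lem3.1}(i) at $(u,f(u))$ to bound $\widehat\partial_Kf(u)\cup\widehat\partial_K^\infty f(u)$ by $\widehat{\mathbf N}(A_1,0)+\widehat{\mathbf N}(A_2,0)=\mathbb{Z}^*\times[0,+\infty)$, and conclude that \eqref{4.9b} fails---this is precisely the paper's argument, only phrased through the coderivative identities of the Remark rather than directly from the definitions.
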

\begin{proof}
(i) Let $\mathbb{Y}$ be an Asplund space, $K\subseteq \mathbb{Y}$ a nonempty closed convex cone, and let $f\in\Lambda_K(\mathbb{X},\mathbb{Y})$ be metrically subregular at $\bar x\in \mathbf{S}_K(f)$.  
Define $F:\mathbb{X}\to\mathbb{Y}$ by
\[
F(x):=f(x)+K, \quad \forall x\in \mathbb{X}.
\]
Then $F\in\Gamma(\mathbb{X},\mathbb{Y})$ and $F^{-1}(0)=\mathbf{S}_K(f)$.  
Moreover, the conic inequality $f(x)\leq_K 0$ is metrically subregular at $\bar x$ if and only if $F$ is metrically subregular at $(\bar x,0)\in{\rm gph}(F)$.  

By \cref{theorem3.2}, there exists $\delta>0$ such that for any  
$x\in F^{-1}(0)\cap {\bf B}(\bar x,\delta)$ and any $\epsilon>0$,
\begin{equation}\label{4.19a}
	\widehat{\mathbf{N}}\big(F^{-1}(0), x\big)
	\subseteq \bigcup_{(u,v)\in \mathbf{B}((\bar x,0),\epsilon)\cap {\rm gph}(F)}
	\widehat D^*F(u,v)(\mathbb{Y}^*) + \epsilon\,\mathbf{B}_{\mathbb{X}^*}.
\end{equation}

We claim that
\begin{equation}\label{4.20a}
	\widehat D^*F(u,v)(\mathbb{Y}^*) \subseteq 
	\mathbb{R}_+\,\widehat\partial_K f(u) + \widehat\partial_K^{\infty} f(u),
	\quad \forall (u,v)\in {\rm gph}(F).
\end{equation}
Assuming this, \eqref{4.19a} immediately yields \eqref{4.9a}.

To verify \eqref{4.20a}, let $(u,v)\in {\rm gph}(F)$ and take $x^*\in \widehat D^*F(u,v)(\mathbb{Y}^*)$.  
Then there exists $y^*\in \mathbb{Y}^*$ such that
\[
(x^*,-y^*)\in\widehat{\mathbf{N}}({\rm gph}(F), (u,v)).
\]
By \cref{lem4.2},
\begin{equation}\label{4.21a}
	(x^*,-y^*)\in\widehat{\mathbf{N}}({\rm epi}_{K}(f), (u, f(u))),
\end{equation}
and hence \cref{lem4.1} implies
\[
x^*\in \widehat D^*F(u,f(u))(y^*)
\subseteq \mathbb{R}_+\,\widehat\partial_K f(u) + \widehat\partial_K^{\infty} f(u),
\]
which establishes \eqref{4.20a}.

\smallskip
(ii) Suppose,  {on} the contrary, that $\mathbb{X}$ is not an Asplund space.  
By \cite{MW2000,FM1998}, $\mathbb{X}$ can be written as $\mathbb{X}=\mathbb{Z}\times \mathbb{R}$ endowed with the norm  
$\|(z,\alpha)\|:=\|z\|+|\alpha|$ for $(z,\alpha)\in \mathbb{X}$, where $\mathbb{Z}$ is not Asplund.  
By \cite[Theorem 1.5.3]{DGZ1993} (see also \cite[Theorem 2.1]{FM1998}), there exists an equivalent norm $|||\cdot|||$ on $\mathbb{Z}$ and $\gamma>0$ such that \eqref{3.10} holds.

Define $\varphi:\mathbb{Z}\to\mathbb{R}$ by $\varphi(z):=-|||z|||$, and set
\[
A_1:=\{0_{\mathbb{Z}}\}\times (-\infty,0], 
\quad A_2:=\epi(\varphi),
\quad \bar x:=0_\mathbb{X},
\quad \bar y:=(\bar x,\bar x).
\]
Let $\mathbb{Y}:=\mathbb{X}^2$ with the $\ell^1$-norm, and define  
$K:=-(A_1\times A_2)$.  
Then $K$ is a nonempty closed cone with a nontrivial recession cone (since $K^{\infty}\supseteq -A_1\times \{0\}$).  

Define $f:\mathbb{X}\to\mathbb{Y}$ by $f(x):=(x,x)$.  
The conic inequality
\[
f(x)\leq_K 0
\]
has solution set $\mathbf{S}_K(f)$, and with $F(x):=f(x)+K$ we have ${\rm epi}_K(f)={\rm gph}(F)$.

We will show that for any $\varepsilon>0$,
\begin{equation}\label{4.9c}
	\widehat{\mathbf{N}}(\mathbf{S}_K(f), \bar x)
	\not\subseteq 
	\bigcup_{u\in \mathbf{B}(\bar x,\varepsilon)}\left[\mathbb{R}_+\,\widehat\partial_K f(u) + \widehat\partial_K^{\infty} f(u)\right]
	+ \varepsilon\,{\bf B}_{\mathbb{X}^*},
\end{equation}
contradicting \eqref{4.9b}.

Let $\varepsilon>0$ and $u\in \mathbf{B}(\bar x,\varepsilon)$.  
We claim that
\begin{equation}\label{4.11a}
	\widehat\partial_K f(u) \cup \widehat\partial_K^{\infty} f(u) 
	\subseteq \widehat{\mathbf{N}}(A_1,0) + \widehat{\mathbf{N}}(A_2,0).
\end{equation}
Indeed, take $x^*\in \widehat\partial_K f(u) \cup \widehat\partial_K^{\infty} f(u)$.  
Then there exists $y^*=(y^*(1),y^*(2))\in K^{\infty,+}$ such that
\[
(x^*,-y^*) \in \widehat{\mathbf{N}}({\rm epi}_K(f),(u,f(u)))
= \widehat{\mathbf{N}}({\rm gph}(F),(u,f(u))).
\]
By \cref{lem3.1},
\[
y^* \in \widehat{\mathbf{N}}(A_1,0)\times \widehat{\mathbf{N}}(A_2,0),
\quad
x^* = y^*(1) + y^*(2),
\]
since $f(u)=(u,u)$, which proves \eqref{4.11a}.

From the proof of \eqref{3.11} we have
\[
\mathbf{d}(x, \mathbf{S}_K(f)) \leq 2\,\mathbf{d}(f(x), -K)
\]
for all $x=(z,\alpha)\in \mathbb{Z}\times \mathbb{R}$, and so $f(x)\leq_K 0$ is metrically subregular at $\bar x$.

By \eqref{2} and \eqref{3},  {one has}
\[
\widehat{\mathbf{N}}(A_1,0) = \mathbb{Z}^*\times [0,+\infty),
\quad
\widehat{\mathbf{N}}(A_2,0) = \{(0,0)\}.
\]
Moreover,
\[
\widehat{\mathbf{N}}(\mathbf{S}_K(f),\bar x)
= \widehat{\mathbf{N}}(F^{-1}(\bar y),\bar x)
= \mathbb{Z}^*\times \mathbb{R},
\quad
\varepsilon\,\mathbf{B}_{\mathbb{X}^*} \subseteq \mathbb{Z}^* \times [-\varepsilon,\varepsilon].
\]
Thus,
\[
\widehat{\mathbf{N}}(F^{-1}(\bar y),\bar x)
\not\subseteq 
\mathbb{Z}^*\times [0,+\infty) + \{(0,0)\} + \varepsilon\,\mathbf{B}_{\mathbb{X}^*}.
\]
Combining this with \eqref{4.11a} yields \eqref{4.9c}, a contradiction.  
The proof is complete.\end{proof}

\medskip

\begin{remark}\label{remark4.1}
It is noted that the conclusion in (i) of \cref{theorem4.2} may be invalid if the convexity assumption of $K\subseteq \mathbb{Y}$ is dropped. For example, let $\mathbb{X}:=\mathbb{R}$, $\mathbb{Y}:=\mathbb{R}^2$ and $K:={\rm epi}(\varphi)$ where $\varphi(x):=-|x|$, $x\in\mathbb{X}$. Then $K\subseteq \mathbb{Y}$ is a non-convex and closed cone with a nontrivial recession cone. Define $f:\mathbb{X}\rightarrow \mathbb{Y}$ by $f(x):=(x,2x)$ for all $x\in\mathbb{X}$ and let $\bar x:=0$. We consider the metric subregularity of the conic inequality $f(x)\leq_K 0$ at $\bar x$. It is easy to verify that $\mathbf{S}_K(f)=(-\infty, 0]$ and thus $\widehat{\mathbf{N}}(\mathbf{S}_K(f),\bar x)=[0,+\infty)$.

Let $\varepsilon>0$. We claim that for any $u\in \mathbf{B}(\bar x,\varepsilon)$, one has
\[
 {\widehat{\mathbf{N}}({\rm epi}_K(f),(u,f(u)))\subseteq \{0\}\times \mathbb{Y}^*}.
\]
Granting this, it follows that 
\[
\widehat{\mathbf{N}}(\mathbf{S}_K(f), \bar x) \not\subseteq 
\bigcup\left\{\mathbb{R}_+\widehat\partial_K f(u)+\widehat \partial_K^{\infty}f(u) : u\in \mathbf{B}(\bar x,\varepsilon)\right\} + \varepsilon {\bf B}_{\mathbb{X}^*},
\]
which implies that the conclusion in (i) of \cref{theorem4.2} does not hold.

Let $u\in \mathbf{B}(\bar x,\varepsilon)$ and $(u^*,-v^*)\in\widehat{\mathbf{N}}({\rm epi}_K(f),(u,f(u)))$ with $v^*=(v^*(1),v^*(2))$. Then
\begin{equation}\label{4-23a}
\limsup_{(x,y) \xrightarrow{{\rm epi}_K(f)}(u,f(u))} 
\frac{\langle u^*,x-u\rangle - \langle v^*,y-f(u)\rangle}{\|(x-u, y-f(u))\|} \leq 0.
\end{equation}

For any $x\rightarrow u$ with $x<u$ and $y:=f(u)$, one has $(x,y)\rightarrow(u,f(u))$ with $(x,y)\in {\rm epi}_K(f)$, and it follows from \eqref{4-23a} that $u^*\geq 0$.

For any $x\rightarrow u$ with $x>u$ and $y:=f(x)$, one has $(x,y)\rightarrow(u,f(u))$ with $(x,y)\in {\rm epi}_K(f)$, and \eqref{4-23a} implies
\begin{equation}\label{4-24a}
u^* - (v^*(1)+2v^*(2)) \leq 0.
\end{equation}

For sufficiently small $\varepsilon$, let $x:=u$ and $y:=f(u)+(\varepsilon, 0)$. Then $(x,y)\xrightarrow{{\rm epi}_K(f)}(u,f(u))$ as $\varepsilon\rightarrow 0$, and \eqref{4-23a} gives $v^*(1) = 0$.

For sufficiently small $\varepsilon$, let $x:=u$ and $y:=f(u)+(2\varepsilon, \varepsilon)$. Then $(x,y)\xrightarrow{{\rm epi}_K(f)}(u,f(u))$ as $\varepsilon\rightarrow 0$, and \eqref{4-23a} gives $v^*(2) = 0$. This and \eqref{4-24a} imply $u^* \leq 0$, and consequently $u^*=0$ (thanks to $u^*\geq 0$). \hfill $\Box$
\end{remark}

\begin{remark}
It is known from \cite{Mordukhovich} that limiting and Fr\'echet normal cones can be used to characterize Asplund spaces.  
In particular, \cite[Theorems~2.22 and~20]{Mordukhovich} show that the exact and approximate extremal principles, formulated via limiting and Fr\'echet normal cones, provide characterizations of Asplund spaces.  
Moreover, \cite[Theorem~2.30]{Mordukhovich} establishes exact sum rules for limiting subdifferentials and a fuzzy sum rule for Fr\'echet subdifferentials in the semi-Lipschitzian setting; each of these rules, when applied to all semi-Lipschitzian sums, is again shown to characterize Asplund spaces.  
Compared with these classical results, \cref{th4.1} and \cref{theorem4.2} reveal that the limiting-type subdifferentials (with respect to the cone) differ slightly from their Fr\'echet counterparts in how they characterize Asplund spaces.
\end{remark}

 {Based on \cref{theorem3.2} and \cref{theorem4.2}, the next theorem reveals the close connection between coinc inequalities and multifunctions when dealing with necessary dual conditions for metric subregularity in terms of Fr\'echet normal cones; that is,}

 {	\begin{theorem}
		Let $\mathbb{X}$ be a Banach space. Consider the following statements:
		\begin{itemize}
			\item[\rm (i)] For any Asplund space $\mathbb{Y}$, any nonempty closed cone $K\subseteq \mathbb{Y}$ with a nontrivial recession cone, and any $f\in\Lambda_K(\mathbb{X},\mathbb{Y})$ being metrically subregular  {at $\bar x\in \mathbf{S}_K(f)$}, one has \eqref{4.9b} holds for all $\varepsilon>0$. 
						\item [\rm(ii)] For every Asplund space $\mathbb{Y}$ and every $F\in\Gamma(\mathbb{X}, \mathbb{Y})$ being metrically subregular at $(\bar x,\bar y)\in{\rm gph}(F)$,  one has \eqref{3-16a} holds for all $\varepsilon>0$.
				   \item[\rm (iii)] For any Asplund space $\mathbb{Y}$, any nonempty closed convex cone $K\subseteq \mathbb{Y}$, and any $f\in\Lambda_K(\mathbb{X},\mathbb{Y})$ being metrically subregular at $\bar x\in \mathbf{S}_K(f)$, there exists $\delta>0$ such that for any $\varepsilon>0$, one has
				\eqref{4.9a}	holds for all $x\in \mathbf{S}_K(f) \cap \mathbf{B}(\bar x,\delta)$.
		\end{itemize}
		Then $\rm(i)\Rightarrow \rm(ii)\Rightarrow\rm(iii)$.
\end{theorem}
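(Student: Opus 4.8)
The plan is to route both implications through the Asplund property of $\mathbb{X}$, relying entirely on the characterizations already established in \cref{theorem3.2} and \cref{theorem4.2}; no new estimates are required, only a careful matching of the quantified statements.

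For the implication (i) $\Rightarrow$ (ii), I would first note that statement (i) is, word for word, the hypothesis of the implication in part (ii) of \cref{theorem4.2}: it asserts the fuzzy inclusion \eqref{4.9b} at $\bar x$ for every Asplund space $\mathbb{Y}$, every nonempty closed cone $K\subseteq\mathbb{Y}$ with $K^{\infty}\neq\{0\}$, and every $f\in\Lambda_K(\mathbb{X},\mathbb{Y})$ metrically subregular at $\bar x\in\mathbf{S}_K(f)$. Hence \cref{theorem4.2}(ii) applies and forces $\mathbb{X}$ to be an Asplund space. Once this is known, the implication (i) $\Rightarrow$ (iii) of \cref{theorem3.2} yields, for every Asplund space $\mathbb{Y}$ and every $F\in\Gamma(\mathbb{X},\mathbb{Y})$ metrically subregular at $(\bar x,\bar y)\in{\rm gph}(F)$, the fuzzy inclusion \eqref{3-16a} for all $\varepsilon>0$, which is precisely statement (ii).

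For the implication (ii) $\Rightarrow$ (iii), I would observe that statement (ii) coincides verbatim with item (iii) of \cref{theorem3.2}, so the implication (iii) $\Rightarrow$ (i) of that theorem gives that $\mathbb{X}$ is Asplund. With $\mathbb{X}$ Asplund, part (i) of \cref{theorem4.2} then provides, for every Asplund space $\mathbb{Y}$, every nonempty closed convex cone $K\subseteq\mathbb{Y}$, and every $f\in\Lambda_K(\mathbb{X},\mathbb{Y})$ metrically subregular at $\bar x\in\mathbf{S}_K(f)$, a radius $\delta>0$ such that \eqref{4.9a} holds for all $x\in\mathbf{S}_K(f)\cap\mathbf{B}(\bar x,\delta)$ and all $\varepsilon>0$; this is exactly statement (iii).

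The only genuinely delicate point, and the closest thing to an obstacle in an otherwise bookkeeping argument, is the matching of the structural hypotheses on the cone $K$. Statement (i) and \cref{theorem4.2}(ii) both require only $K^{\infty}\neq\{0\}$, with no convexity, so the first deduction loses nothing; and statement (iii) and \cref{theorem4.2}(i) both impose convexity of $K$, so the second deduction is consistent as well. Beyond this alignment of quantifiers there is no analytic difficulty: the content of the theorem is that, mediated by the Asplund property of $\mathbb{X}$, the fuzzy dual conditions necessary for metric subregularity of closed multifunctions and those necessary for metric subregularity of conic inequalities are interchangeable.
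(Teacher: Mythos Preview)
Your proposal is correct and follows precisely the route the paper indicates: the theorem is stated as an immediate consequence of \cref{theorem3.2} and \cref{theorem4.2}, and your argument makes explicit the intended chain, namely using \cref{theorem4.2}(ii) and \cref{theorem3.2}(iii)$\Rightarrow$(i) to pass through the Asplund property of $\mathbb{X}$, then invoking the forward implications of those same theorems to recover (ii) and (iii) respectively. Your observation about the alignment of the cone hypotheses (nontrivial recession cone in (i), convexity in (iii)) is exactly the bookkeeping point needed.
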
}

\medskip

Using Fr\'echet and singular subdifferentials of vector-valued functions, we now present a sharper necessary condition for the metric subregularity of conic inequalities.

\begin{theorem}
Let $\mathbb{X}$ and $\mathbb{Y}$ be Asplund spaces, let $K\subseteq \mathbb{Y}$ be a nonempty closed convex cone, and let $f\in\Lambda_K(\mathbb{X},\mathbb{Y})$ be such that the conic inequality $f(x)\leq_K 0$ is metrically subregular at some $\bar x\in \mathbf{S}_K(f)$.  
Then there exist constants $\tau,\delta\in(0,+\infty)$ such that for every $\epsilon>0$,  {one has}
\begin{equation}\label{4.18}
\widehat{\mathbf{N}}(\mathbf{S}_K(f), x)\cap {\bf B}_{\mathbb{X}^*}
\ \subseteq\  {\bigcup}
\left\{
[0,(1+\epsilon)\tau]\,\widehat\partial_K f(u) + \widehat\partial_K^{\infty} f(u) : u\in \mathbf{B}(x,\epsilon)
\right\}
+ \epsilon\, {\bf B}_{\mathbb{X}^*}
\end{equation}
holds for all $x\in \mathbf{S}_K(f)\cap \mathbf{B}(\bar x,\delta)$.
\end{theorem}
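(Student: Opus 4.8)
The plan is to reduce the statement to \cref{th3.2} by passing from the conic inequality to its associated multifunction, exactly as in the proof of \cref{th4.1}. First I would set $F(x) := f(x) + K$ for $x \in \mathbb{X}$. Since $\mathrm{gph}(F) = \mathrm{epi}_K(f)$ is closed by the assumption $f \in \Lambda_K(\mathbb{X},\mathbb{Y})$, we have $F \in \Gamma(\mathbb{X},\mathbb{Y})$, and clearly $F^{-1}(0) = \mathbf{S}_K(f)$. A direct computation gives $\mathbf{d}(0, F(x)) = \mathbf{d}(f(x), -K)$ for every $x \in \mathbb{X}$, so the metric subregularity of the conic inequality $f(x) \leq_K 0$ at $\bar x \in \mathbf{S}_K(f)$ is equivalent to the metric subregularity of $F$ at $(\bar x, 0) \in \mathrm{gph}(F)$.

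Next I would apply \cref{th3.2} to $F$ at $(\bar x, 0)$, which produces constants $\tau, \delta \in (0,+\infty)$ such that for every $\epsilon > 0$,
\[
\widehat{\mathbf{N}}(F^{-1}(0), x) \cap \mathbf{B}_{\mathbb{X}^*} \subseteq \big\{ \tau\, \widehat D^*F(u,v)\big((1+\epsilon)\mathbf{B}_{\mathbb{Y}^*}\big) : (u,v) \in \mathbf{B}((x,0),\epsilon) \cap \mathrm{gph}(F) \big\} + \epsilon\, \mathbf{B}_{\mathbb{X}^*}
\]
for all $x \in \mathbf{B}(\bar x, \delta) \cap F^{-1}(0)$. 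It then remains to bound each term $\widehat D^*F(u,v)((1+\epsilon)\mathbf{B}_{\mathbb{Y}^*})$ in terms of the $K$-subdifferentials of $f$. For this I would establish a quantitatively refined Fr\'echet analogue of \cref{lem4.1}: for every $(u,v) \in \mathrm{gph}(F)$,
\[
\widehat D^*F(u,v)\big((1+\epsilon)\mathbf{B}_{\mathbb{Y}^*}\big) \subseteq [0,\,1+\epsilon]\,\widehat\partial_K f(u) + \widehat\partial_K^{\infty} f(u).
\]
Indeed, if $(x^*,-y^*) \in \widehat{\mathbf{N}}(\mathrm{gph}(F),(u,v))$ with $\|y^*\| \le 1+\epsilon$, then testing along the curves $(u,v+te)$ with $e \in K^{\infty}$ and $t \downarrow 0$ (which stay in $\mathrm{gph}(F)$ because $K + K^{\infty} \subseteq K$) shows $y^* \in K^{\infty,+}$; moreover, by \cref{lem4.2} we have $(x^*,-y^*) \in \widehat{\mathbf{N}}(\mathrm{epi}_K(f),(u,f(u)))$. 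If $y^* = 0$ this gives $x^* \in \widehat\partial_K^{\infty} f(u)$; if $y^* \neq 0$, dividing by $\|y^*\|$ (using that the Fr\'echet normal cone and $K^{\infty,+}$ are cones) yields $x^*/\|y^*\| \in \widehat\partial_K f(u)$, hence $x^* \in \|y^*\|\,\widehat\partial_K f(u) \subseteq [0,1+\epsilon]\,\widehat\partial_K f(u)$.

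Multiplying by $\tau$ and using that $\widehat\partial_K^{\infty} f(u)$ is a cone gives $\tau\, \widehat D^*F(u,v)((1+\epsilon)\mathbf{B}_{\mathbb{Y}^*}) \subseteq [0,(1+\epsilon)\tau]\,\widehat\partial_K f(u) + \widehat\partial_K^{\infty} f(u)$. Since $(u,v) \in \mathbf{B}((x,0),\epsilon)$ forces $\|u-x\| < \epsilon$, i.e. $u \in \mathbf{B}(x,\epsilon)$, and since $F^{-1}(0) = \mathbf{S}_K(f)$, substituting this estimate into the inclusion furnished by \cref{th3.2} yields exactly \eqref{4.18}. I do not expect a serious obstacle: the reduction to \cref{th3.2} is routine and the only genuine work is the refined Fr\'echet coderivative estimate, which is a minor variant of \cref{lem4.1}; the single point requiring care is tracking the scalar interval $[0,(1+\epsilon)\tau]$ correctly through the normalization $x^* \mapsto x^*/\|y^*\|$ together with the positive-scaling invariance of $\widehat\partial_K^{\infty} f(u)$.
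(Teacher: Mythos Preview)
Your proposal is correct and follows essentially the same route as the paper: reduce to the multifunction $F(x)=f(x)+K$, invoke \cref{th3.2}, then use \cref{lem4.2} together with the case split $y^*=0$ versus $y^*\neq 0$ to obtain the inclusion $\widehat D^*F(u,v)\big((1+\epsilon)\mathbf{B}_{\mathbb{Y}^*}\big)\subseteq[0,1+\epsilon]\,\widehat\partial_Kf(u)+\widehat\partial_K^{\infty}f(u)$. Your explicit verification that $y^*\in K^{\infty,+}$ via curves $(u,v+te)$ is a detail the paper leaves implicit, but otherwise the arguments coincide.
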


\begin{proof}
Define 
\[
F(x) := f(x) + K, \quad \forall x \in \mathbb{X}.
\] 
Then $F \in \Gamma(\mathbb{X},\mathbb{Y})$ and $F^{-1}(0) = \mathbf{S}_K(f)$.  
Moreover, one can verify that the conic inequality $f(x) \leq_K 0$ is metrically subregular at $\bar{x}$ if and only if $F$ is metrically subregular at $(\bar{x},0) \in \mathrm{gph}(F)$.  

By virtue of \cref{th3.2}, there exist $\tau, \delta > 0$ such that for any 
$x \in F^{-1}(0) \cap \mathbf{B}(\bar{x},\delta)$, one has  
\begin{equation}\label{4.19}
   \widehat{\mathbf{N}}\big(F^{-1}(0), x\big) \cap \mathbf{B}_{\mathbb{X}^*}
   \subseteq 
   { \bigcup_{(u,v) \in \mathbf{B}\big((\bar{x},0),\epsilon\big) \cap \mathrm{gph}(F)}\left[ \tau\, \widehat{D}^*F(u,v)\big((1+\epsilon)\mathbf{B}_{\mathbb{Y}^*}\big)
    \right]}
   + \epsilon\,\mathbf{B}_{\mathbb{X}^*}.
\end{equation}

We next show that for any $(u,v) \in \mathrm{gph}(F)$, one has  
\begin{equation}\label{4.20}
   \widehat{D}^*F(u,v)\big((1+\epsilon)\mathbf{B}_{\mathbb{Y}^*}\big)
   \subseteq 
   [0,(1+\epsilon)\tau]\,\widehat{\partial}_K f(u) + \widehat{\partial}_K^{\infty} f(u).
\end{equation}
Granting this, it follows from \eqref{4.19} that \eqref{4.18} holds.  

Let $(u,v) \in \mathrm{gph}(F)$ and $x^* \in \widehat{D}^*F(u,v)\big((1+\epsilon)\mathbf{B}_{\mathbb{Y}^*}\big)$.  
Then there exists $y^* \in \mathbf{B}_{\mathbb{Y}^*}$ such that  
\[
   \big(x^*, -(1+\epsilon)y^*\big) \in \widehat{\mathbf{N}}\big(\mathrm{gph}(F), (u,v)\big).
\]
By \cref{lem4.2}, one has  
\begin{equation}\label{4.21}
   (x^*, -y^*) \in \widehat{\mathbf{N}}\big({\rm epi}_K(f), (u, f(u))\big).
\end{equation}

If $y^* = 0$, then \eqref{4.21} implies $x^* \in \widehat{\partial}_K^{\infty} f(u)$, and thus \eqref{4.20} holds.  

If $y^* \neq 0$, then from \eqref{4.21} we obtain  
\[
   \left( \frac{x^*}{(1+\epsilon)\|y^*\|}, -\frac{y^*}{\|y^*\|} \right)
   \in \widehat{\mathbf{N}}\big({\rm epi}_K(f), (u, f(u))\big),
\]
and consequently  
\[
   x^* \in (1+\epsilon)\|y^*\|\,\widehat{\partial}_K f(u) 
   \subseteq [0,\,1+\epsilon]\,\widehat{\partial}_K f(u)
   \quad \text{(since $\|y^*\| \leq 1$)}.
\]
This establishes \eqref{4.20}.  
\end{proof}

It is worth noting that all main calculus results in terms of Fr\'echet constructions 
generally hold only in a fuzzy form, and the parameter $\epsilon$ in \eqref{4.18} 
cannot, in general, be taken to be $0$.  
However, the following theorem shows that in the finite-dimensional setting it is 
possible to take $\epsilon=0$ by using limiting normal cones and subdifferentials.

\begin{theorem}\label{th4.3}
Let $\mathbb{X}$ be a finite-dimensional space, $\mathbb{Y}$ be an Asplund space, 
$K \subseteq \mathbb{Y}$ be a nonempty closed convex cone, and let 
$f \in \Lambda_K(\mathbb{X},\mathbb{Y})$ be such that the conic inequality 
$f(x) \leq_K 0$ is metrically subregular at 
$\bar{x} \in \mathbf{S}_K(f)$ with $f(\bar{x}) = 0$.  
Then there exist $\tau, \delta \in (0,+\infty)$ such that 
\begin{equation}\label{4.18a}
    \mathbf{N}(\mathbf{S}_K(f), x) \cap \mathbf{B}_{\mathbb{X}^*} 
    \subseteq [0,\tau]\,\partial_K f(x) + \partial_K^{\infty} f(x)
\end{equation}
for all $x \in \mathbf{S}_K(f) \cap \mathbf{B}(\bar{x},\delta)$ with $f(x)=0$.
\end{theorem}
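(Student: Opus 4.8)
The plan is to reduce Theorem~\ref{th4.3} to the finite-dimensional result on multifunctions, namely \cref{th3.3}, by passing through the multifunction $F(x):=f(x)+K$ as in the proof of \cref{theorem4.2}. First I would observe that $F\in\Gamma(\mathbb{X},\mathbb{Y})$ with $F^{-1}(0)=\mathbf{S}_K(f)$, and that the conic inequality $f(x)\leq_K 0$ is metrically subregular at $\bar x$ if and only if $F$ is metrically subregular at $(\bar x,0)\in{\rm gph}(F)$; this equivalence was already used in the proof of \cref{th4.1}. Since $\mathbb{X}$ is finite-dimensional (hence Asplund) and $\mathbb{Y}$ is Asplund, \cref{th3.3} applies and gives constants $\tau,\delta\in(0,+\infty)$ such that
\[
\mathbf{N}(F^{-1}(0),x)\cap\mathbf{B}_{\mathbb{X}^*}\subseteq\tau\,D^*F(x,0)(\mathbf{B}_{\mathbb{Y}^*})
\]
for all $x\in\mathbf{B}(\bar x,\delta)\cap F^{-1}(0)$.

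The next step is to convert the right-hand side into subdifferentials of $f$ relative to $K$. Fix such an $x$ with $f(x)=0$ and take $x^*\in\tau\,D^*F(x,0)(\mathbf{B}_{\mathbb{Y}^*})$, so $x^*=\tau x_0^*$ with $x_0^*\in D^*F(x,0)(y^*)$ for some $y^*\in\mathbf{B}_{\mathbb{Y}^*}$; equivalently $(x_0^*,-y^*)\in\mathbf{N}({\rm gph}(F),(x,0))=\mathbf{N}({\rm epi}_K(f),(x,f(x)))$. If $y^*=0$ then $x_0^*\in\partial_K^\infty f(x)$, so $x^*=\tau x_0^*\in\tau\,\partial_K^\infty f(x)=\partial_K^\infty f(x)$ since $\partial_K^\infty f(x)$ is a cone. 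If $y^*\neq 0$, then $\|y^*\|\le 1$ and $(x_0^*/\|y^*\|,-y^*/\|y^*\|)\in\mathbf{N}({\rm epi}_K(f),(x,f(x)))$, whence $x_0^*/\|y^*\|\in\partial_K f(x)$ and therefore $x^*=\tau\|y^*\|\cdot(x_0^*/\|y^*\|)\in[0,\tau]\,\partial_K f(x)$. In either case $x^*\in[0,\tau]\,\partial_K f(x)+\partial_K^\infty f(x)$, which is exactly \eqref{4.18a}. I would streamline this by essentially quoting \cref{lem4.1}, which already packages the inclusion $D^*F(x,f(x))(\mathbb{Y}^*)\subseteq\mathbb{R}_+\partial_K f(x)+\partial_K^\infty f(x)$, refined to keep track of the norm bound $\|y^*\|\le 1$ so that the coefficient stays in $[0,\tau]$ rather than $\mathbb{R}_+$.

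The only point requiring a little care — and the main (mild) obstacle — is the passage from $D^*F(x,0)(\mathbf{B}_{\mathbb{Y}^*})$ to a bound with coefficient in the bounded interval $[0,\tau]$: one must verify that the scalar $\|y^*\|$ extracted from the coderivative representation is genuinely $\le 1$, and that the singular part ($y^*=0$) is absorbed without a coefficient because $\partial_K^\infty f(x)$ is a cone. This is a direct bookkeeping refinement of the argument in \cref{lem4.1} and presents no real difficulty; no additional hypothesis beyond the finite-dimensionality of $\mathbb{X}$ (used to invoke \cref{th3.3}) is needed. The convexity of $K$ is not actually used in this argument — \cref{lem4.1} does not require it — but I would keep it in the statement for consistency with the surrounding fuzzy results.
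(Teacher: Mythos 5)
Your proposal is correct and follows essentially the same route as the paper: reduce to $F(x):=f(x)+K$, invoke \cref{th3.3} to get $\mathbf{N}(F^{-1}(0),x)\cap\mathbf{B}_{\mathbb{X}^*}\subseteq\tau D^*F(x,0)(\mathbf{B}_{\mathbb{Y}^*})$, and then convert the coderivative into $[0,\tau]\,\partial_K f(x)+\partial_K^{\infty}f(x)$ by the case split $y^*=0$ versus $y^*\neq 0$, using \cref{lem4.1} (for $y^*\in K^{\infty,+}$) and the bound $\|y^*\|\le 1$, with the singular part absorbed since it is a cone. This is exactly the paper's argument, so nothing further is needed.
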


\begin{proof}
Define $F(x) := f(x) + K$ for all $x \in \mathbb{X}$.  
Then $F \in \Gamma(\mathbb{X},\mathbb{Y})$ and 
$F^{-1}(0) = \mathbf{S}_K(f)$.  
By virtue of \cref{th3.3}, there exist $\tau, \delta \in (0,+\infty)$ such that 
\begin{equation}\label{4.23}
    \mathbf{N}(F^{-1}(0), x) \cap \mathbf{B}_{\mathbb{X}^*} 
    \subseteq \tau\, D^*F(x,0)(\mathbf{B}_{\mathbb{Y}^*}), 
    \quad \forall x \in \mathbf{B}(\bar{x},\delta) \cap F^{-1}(0).
\end{equation}

Let $x \in \mathbf{B}(\bar{x},\delta) \cap \mathbf{S}_K(f)$ be such that $f(x) = 0$.    {For \eqref{4.18a}, it suffices to}
 show that
\begin{equation}\label{4.24}
    D^*F(x,0)(\mathbf{B}_{\mathbb{Y}^*}) 
    \subseteq [0,1]\,\partial_K f(x) + \partial_K^{\infty} f(x).
\end{equation}

Take any $x^* \in D^*F(x,0)(\mathbf{B}_{\mathbb{Y}^*})$.  
Then there exists $y^* \in \mathbf{B}_{\mathbb{Y}^*}$ such that
\[
    (x^*,-y^*) \in \mathbf{N}(\mathrm{gph}(F),(x,0)) 
    = \mathbf{N}(\mathrm{epi}_K f,(x,f(x))).
\]

If $y^* = 0$, then $x^* \in \partial_K^{\infty} f(x)$, and \eqref{4.24} holds.  

If $y^* \neq 0$, then it follows from \cref{lem4.1} that 
$y^* \in K^{\infty,+}$, and hence
\[
    \left( \frac{x^*}{\|y^*\|}, -\frac{y^*}{\|y^*\|} \right) 
    \in \mathbf{N}(\mathrm{epi}_K f,(x,f(x))).
\]
This implies that
\[
    x^* \in \|y^*\|\,\partial_K f(x) 
    \subseteq [0,1]\,\partial_K f(x),
\]
so \eqref{4.24} follows.  
The proof is complete.
\end{proof}

\section*{Conclusion}

This paper investigates the metric subregularity of multifunctions and its connection to characterizations of Asplund spaces. For a closed multifunction between Asplund spaces, we have shown that metric subregularity implies certain exact and fuzzy inclusions, expressed respectively via limiting and Fr\'echet coderivatives as well as normal cones. Moreover, each of these inclusions, when applied to all closed multifunctions, serves as  {characterizations} of Asplund spaces.

As an application, we  {study} the metric subregularity of a conic inequality defined by a vector-valued function and a closed (not necessarily convex) cone. By employing limiting subdifferentials of the vector-valued function (with respect to the given closed cone), we obtained an exact inclusion implied by metric subregularity. When this exact inclusion is applied to all conic inequalities, it characterizes Asplund spaces. This leads to the insight that the result in \cite[Proposition~2]{LewisPang1998}---that a local error bound implies the $\mathbf{BCQ}$ for convex inequalities---is essentially a consequence of the Asplund property of finite-dimensional spaces.

In contrast, by using Fr\'echet subdifferentials (with respect to the same closed cone), we derived a fuzzy inclusion implied by metric subregularity. However, when applied to all conic inequalities, this fuzzy inclusion is only sufficient (and not necessary) for a space to be Asplund. This reveals a distinction between the Fr\'echet and limiting types of subdifferentials in their ability to characterize Asplund spaces.

Finally, we note that these two inclusions differ from the known characterizations of Asplund spaces obtained via limiting and Fr\'echet normal cones, as presented in \cite[Theorems~2.20, 2.22, and~2.30]{Mordukhovich}.

\bibliographystyle{plain}

\bibliography{WTY2025}

\end{document}